\let\@wraptoccontribs\wraptoccontribs
\title[Variation of total Q-prime curvature]%
{Variation of total Q-prime curvature \\ 
on CR manifolds}
\author{Kengo Hirachi}
\address{K.H: Graduate School of Mathematical Sciences, The University of Tokyo,
	3-8-1 Komaba, Meguro, Tokyo 153-8914, Japan}
\email{hirachi@ms.u-tokyo.ac.jp}
\author{Taiji Marugame}
\address{T.M: Graduate School of Mathematical Sciences, The University of Tokyo,
	3-8-1 Komaba, Meguro, Tokyo 153-8914, Japan}
\email{marugame@ms.u-tokyo.ac.jp}
\author{Yoshihiko Matsumoto}
\address{Y.M: Department of Mathematics, Graduate School of Science and Engineering, Tokyo Institute of Technology,
	2-12-1 Ookayama, Meguro, Tokyo 152-8551, Japan}
\curraddr{Department of Mathematics, Graduate School of Science, Osaka University,
	1-1 Machikaneyama-cho, Toyonaka, Osaka 560-0043, Japan}
\email{matsumoto@math.sci.osaka-u.ac.jp}
\address{A.R.G: The University of Auckland\\
Private Bag 92019\\
Auckland 1142, New Zealand}
\email{r.gover@auckland.ac.nz}
\subjclass[2010]{32V05 (primary),
32Q15 (secondary)}  
\keywords{$Q$-prime curvature; CR manifolds; strictly pseudoconvex domains; pseudo-Einstein structures; K\"ahler--Einstein metric; CR invariant differential operators} 
\numberwithin{equation}{section}
\newtheorem{lem}{Lemma}[section]
\newtheorem{thm}[lem]{Theorem}
\newtheorem{prop}[lem]{Proposition}
\theoremstyle{definition}
\newtheorem{defn}[lem]{\it Definition}
\newtheorem{rem}[lem]{\it Remark}
\newcommand{\calE}{\mathcal{E}}
\newcommand{\calH}{\mathcal{H}}
\newcommand{\calJ}{\mathcal{J}}
\newcommand{\calL}{\mathcal{L}}
\newcommand{\calN}{\mathcal{N}}
\newcommand{\calO}{\mathcal{O}}
\newcommand{\calP}{\mathcal{P}}
\newcommand{\calPE}{\mathcal{PE}}
\newcommand{\bC}{\mathbb{C}}
\newcommand{\bR}{\mathbb{R}}
\newcommand{\bZ}{\mathbb{Z}}
\newcommand{\bfth}{\boldsymbol{\theta}}
\newcommand{\bfr}{{\boldsymbol{r}}}
\newcommand{\bfrho}{{\boldsymbol{\rho}}}
\DeclareMathOperator{\Ric}{Ric}
\DeclareMathOperator{\tr}{tr}
\DeclareMathOperator{\tf}{tf}
\DeclareMathOperator{\lp}{lp}
\DeclareMathOperator{\Scal}{Scal}
\DeclareMathOperator{\re}{Re}
\DeclareMathOperator{\im}{Im}
\newcommand{\wt}{\widetilde}
\newcommand{\wf}{\wt f}
\newcommand{\wE}{\wt\calE}
\newcommand{\wn}{\wt\nabla}
\newcommand{\wDelta}{\wt\Delta}
\newcommand{\conj}{\overline}
\newcommand{\pa}{\partial}
\newcommand{\wh}{\widehat}
\newcommand{\ab}{{\alpha\conj\beta}}
\newcommand{\CR}{\lrcorner\,}
\newcommand{\vol}{dv}
\newcommand{\uY}{\underline{Y}}
\newcommand{\uF}{\underline{F}}
\newcommand{\calpha}{{\conj\alpha}}
\newcommand{\cbeta}{{\conj\beta}}
\newcommand{\cA}{{\conj A}}
\newcommand{\cB}{{\conj B}}
\newcommand{\abs}[1]{\lvert#1\rvert}
\newcommand{\cl}[1]{\overline{#1}}
\renewcommand\a{\alpha}
\renewcommand\b{\beta}
\newcommand\g{\gamma}
\renewcommand\d{\delta}
\newcommand\e{\epsilon}
\newcommand\Th{\Theta}
\newcommand\ol{\overline}
\newcommand{\intf}{I_1}
\newcommand{\ints}{I_2}
\begin{document}

\begin{abstract}
We derive variational formulas for the total $Q$-prime curvature under the deformation of strictly pseudoconvex domains in a complex manifold.  We also show that the total $Q$-prime curvature agrees with the renormalized volume of such domains with respect to the complete K\"ahler--Einstein metric.
\end{abstract}
\maketitle

\section{Introduction}
There are strong analogies between CR and conformal geometries.
The theory of ambient metric by Fefferman--Graham \cite{FG3} works equally in both cases and, for example, the invariant differential operators and the $Q$-curvature are defined in a unified way.

However, there are also significant differences which stem from the embeddability (or integrability) of CR manifolds.  (The role of integrability  in connection with the deformation complex is discussed in  \cite{H3}.)  For a CR manifold $M$ embedded as the boundary of a strictly pseudoconvex domain in a Stein manifold, we can define a natural class of contact forms $\theta$, called pseudo-Einstein contact forms.  Such contact forms can be parametrized by the space of CR pluriharmonic functions $\calP$:  if $\theta$ is pseudo-Einstein, then the family of all pseudo-Einstein contact forms is given by $\calP\calE=\{e^u\theta:u\in\calP\}$.  Thus we may set up a conformal geometry of Levi metrics for which the scale factors are restricted to CR pluriharmonic functions.

It naturally follows from the definition that the $Q$-curvature vanishes for pseudo-Einstein contact forms.  Instead, we can define the $Q$-prime curvature as a secondary invariant, which shares the same properties as the $Q$-curvature in conformal geometry; see \cite{BFM,CaY,H2}.
In this paper,  we prove two of such formulas: 
\begin{enumerate}
\item 
The variational formula of the total $Q$-prime curvature in terms of the obstruction function for the Monge--Amp\`ere equation;

\item An expression for the renormalized volume of strictly pseudoconvex domains with complete K\"ahler--Einstein metric in terms of the total $Q$-prime curvature.
\end{enumerate}
We should emphasize the fact that (1) is an analogy of the variational formula in even dimensional conformal geometry \cite{GrH}, while (2) corresponds to the formula for odd dimensional conformal manifolds which bound conformally compact Einstein manifolds of even dimensions \cite{FG2}.  We will explain the correspondence in more detail in \S\ref{conf-CR-corr}.

The results of this paper have been announced in \cite{H3} together with an application of the variational formula to a rigidity theorem in CR geometry; see \S\ref{section-Variational}.  The proof of the  rigidity theorem will be given in a forthcoming paper \cite{HMO}.

\subsection{The ambient metric and $Q$-prime curvature}
To state the results, let us quickly recall the definition of the $Q$-prime curvature.  
We first consider a bounded strictly pseudoconvex domain $\Omega\subset\bC^{n+1}$
with $C^\infty$ boundary $M=\pa\Omega$.
We can fix its defining function $u$ (positive in $\Omega$)
by imposing the complex Monge--Amp\`ere equation, for which the unique existence of the solution
has been proved by Cheng--Yau \cite{ChengYau}:
$$
\calJ[u]=1\qquad \text{in} \quad \Omega,
$$
where 
$$
\calJ[u]=(-1)^{n+1}\det\begin{pmatrix}u & \pa_a u\\ \pa_{\conj b}\,u & \pa_{a\conj b}\,u
\end{pmatrix}_{a,b=1,\dots,n+1}.
$$
It follows that $\Omega$ admits an K\"ahler--Einstein metric $g_+$ with potential $-\log u$:
$$
\Ric[g_+]=-(n+2)g_+.
$$
However, in general, the exact solution has a weak singularity at the boundary; see \eqref{uexpansion}.
We thus use the best smooth approximate solution $r$ constructed by Fefferman \cite{F1}:
a defining function $r\in C^{\infty}(\bC^{n+1})$ of $\Omega$ is called a {\em Fefferman defining function} if
\begin{equation}\label{JeqC}
\calJ[r]=1+\calO\,r^{n+2}
\end{equation}
holds for an $\calO\in C^{\infty}(\bC^{n+1})$, which we call the {\em obstruction function}.
Here  $r$  modulo $O(r^{n+3})$ and $\calO$ modulo $O(r)$ are uniquely determined.
The obstruction function also appears as the logarithmic singularity of the Cheng--Yau solution $u$:
one can write
\begin{equation}
	\label{uexpansion}
	u=r(1+\eta_0 r^{n+2}+\eta_1 r^{n+2}\log r),\qquad
	\eta_0\in C^\infty(\overline{\Omega}),\quad
	\eta_1\in C^{n+1}(\overline{\Omega})
\end{equation}
and $\eta_1$ equals $(n+3)^{-1}\calO$ modulo $O(r)$; see \cite{LM}.
So if $u$ is smooth up to the boundary, then $\calO=O(r)$.
It is shown that the converse is also true; see \cite{Gr2}.

From $r$, we can define a Lorentz--K\"ahler metric $\wt g=\wt g[\bfr]$ on $\bC^*\times \bC^{n+1}$ near
$\bC^*\times M$, called the {\em ambient metric} associated to $M$,
by the K\"ahler form $i\pa\overline\pa \bfr$, where 
\begin{equation}
	\label{ambient_metric_potential}
	\bfr(\lambda,z)=|\lambda|^{{2}/({n+2})} r(z),\quad
	(\lambda,z)\in\bC^*\times \bC^{n+1}.
\end{equation}
The construction of $\bfr$ and $\wt g$ above can be generalized to a bounded strictly pseudoconvex domain $\Omega$ in a complex manifold $X$ of dimension $n+1$, where $\bC^*\times \bC^{n+1}$ is replaced by $K_X^*$, the canonical bundle of $X$ with zero-section removed.
It turns out that the locally defined function on the right-hand side of \eqref{ambient_metric_potential} on each coordinate system can be patched up to a homogeneous function $\bfr\in\wt\calE(1)$, which defines $K_X^*|_{\Omega}$ by $\bfr>0$. We call $\bfr$ the {\em Fefferman defining function} of $K_X^*|_{\Omega}$. (In general, we say that $f\in C^\infty(K^*_X)$ has {\em homogeneous degree} $(w,w)$ if it satisfies $f(\lambda v)=|\lambda|^{2w/(n+2)} f(v)$ for $\lambda\in\bC^*$ and write $f\in\wt\calE(w)$.
The space of the functions given by the restrictions of $f\in\wt\calE(w)$ over $M$ is denoted by $\calE(w)$.)
  Then the ambient metric $\wt g$ satisfies 
$$
\Ric[\wt g]=O(\bfr^{n}).
$$
The obstruction function $\calO$ can be lifted to a homogenous function in $\wt\calE(-n-2)$,
which is also denoted by $\calO$.

Let $h$ be a hermitian fiber metric of  $K_X$.
Then $h$ can be identified with a homogeneous function $h\in\wt\calE(n+2)$ and one may define a function on $X$ by the quotient $r=\bfr/h^{1/(n+2)}$.
It gives a contact form on $M$ by
\begin{equation}
\theta=d^cr\big|_{TM}, \quad\text{where }d^c=\frac{i}2(\pa-\conj\pa).
\end{equation}
We say that $\theta$ is {\em pseudo-Einstein} if it is given by a metric $h$ that is flat on the pseudoconvex side of $M$, i.e. 
\begin{equation}
\label{flat-h}
\pa\conj\pa\log h=0 \quad\text{on } 
\{0\le\bfr\ll1\}.
\end{equation}
In the case $n\ge2$, this is equivalent to Lee's pseudo-Einstein condition \cite{Lee}; see Proposition \ref{psudo-Einstein-prop} below.  It is known that $M\subset X$ admits a pseudo-Einstein contact form if $X$ is Stein \cite{CaoChang}.
 
For a pseudo-Einstein contact form, by using the associated flat metric $h_\theta$, we define 
the {\em $Q$-prime curvature} by
$$
Q'_{\theta}=\frac{1}{(n+2)^2}\wt\Delta^{n+1}(\log h_\theta)^2\big|_{\bfr=0},
$$
where $\wt\Delta$ is the Laplacian of the ambient metric $\wt g$. (Recall \cite{FH} that the $Q$-curvature for $\theta$ is defined by $-(n+2)^{-1}\wt\Delta^{n+1}\log h_\theta$, which vanishes by the condition \eqref{flat-h}).
It turns out that $Q'_\theta\in\calE(-n-1)$ and thus $Q'_\theta$ can be identified with a volume density on $M$.
The  {\em total $Q$-prime curvature} is defined  by the integral
$$
\conj{Q}'=\int_M Q'_\theta.
$$
If $\Omega$ admits a complete K\"ahler--Einstein metric $g_+$, then $\conj{Q}'$ is independent of the choice of a pseudo-Einstein contact form $\theta$ and gives a CR invariant of $M$; see  \cite{H2}.  

\subsection{Total $Q$-prime curvature and renormalized volume}\label{renormalzed-vol-sec}
We recall from \cite{H2} a characterization of the total $Q$-prime curvature in terms of a volume expansion. Suppose that $M$ admits a pseudo-Einstein contact form $\theta$ and set $r=\bfr/h_\theta^{1/(n+2)}$. Then, for the metric $g$ near the boundary with  K\"ahler form  $\omega=dd^c\log r$, 
we have, as $\epsilon\to +0$,
\begin{equation}
	\label{eq:Qprime_expansion-intro}
	\int_{\Omega^\epsilon}\abs{d\log r}^2_{g}dv_{g}
	\!=\sum_{j=0}^n\frac{ a_j}{\epsilon^{n-j+1}}
	+\frac{(-1)^n}{(n!)^3}\overline Q'\log{\epsilon}+O(1),
	\end{equation}
where 
$\Omega^\epsilon=\{z\in \Omega:r(z)>\epsilon\}$ and $dv_g=\omega^{n+1}/(n+1)!$.
If we further assume that $\Omega$ supports a complete K\"ahler--Einstein metric $g_+$, we can also compute the renormalized volume of $(\Omega,g_+)$ by using the expansion above.

\begin{thm}\label{renormalized-volume-thm}
Let $\Omega$ be a bounded strictly pseudoconvex domain of a complex manifold $X$ whose boundary
admits a pseudo-Einstein contact form $\theta$.
Assume that $\Omega$ is equipped with a complete K\"ahler--Einstein metric $g_+$ which is
given by $dd^c\log u$ near $\partial\Omega$, where $u$ is of the form \eqref{uexpansion}.
Then the volume of $\Omega^\epsilon$ admits an expansion, as $\epsilon\to+0$,
$$
\int_{\Omega^\epsilon}dv_{g_+}=\sum_{j=0}^n \frac{b_j}{\epsilon^{n-j+1}}+V+o(1).
$$
The constant term $V$, the {\em renormalized volume of $(\Omega, g_+)$}, satisfies
\begin{equation}\label{Vformula}
V=\frac{(-1)^{n+1}}{2 (n!)^3(n+1)}\,\overline Q'+
\int_\Omega\wt c_1{}^{n+1},
\end{equation}
where
 $\wt c_1\in H^2(\Omega,\pa\Omega;\bR)$ is a lift of  the first Chern class  $c_1(K_\Omega)\in H^2(\Omega;\bR)$. 
\end{thm}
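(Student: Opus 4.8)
\emph{Strategy.} The spine of the argument is the Einstein equation, which turns the volume form into the top self-intersection of the Ricci form and thereby exposes the Chern number. Since $\Ric[g_+]=-(n+2)\omega_+$, writing $\rho=\Ric[g_+]$ for the Ricci form gives $dv_{g_+}=\frac{(-1)^{n+1}}{(n+2)^{n+1}(n+1)!}\rho^{n+1}$. The form $\rho$ represents a fixed multiple of $c_1(K_\Omega)$; I would compare it with the Chern--Weil representative $\Theta$ built from the flat pseudo-Einstein metric $h$ on $K_X$. By \eqref{flat-h} the curvature of $h$ vanishes identically on a collar $\{0\le\bfr\ll1\}$, so $\Theta\equiv0$ near $\partial\Omega$ and hence defines a lift $\wt c_1\in H^2(\Omega,\partial\Omega;\bR)$. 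Writing $\rho=\mu\,\Theta+dd^cF$ for a normalizing constant $\mu$ and a global potential $F$, I would expand $\rho^{n+1}$ by the binomial theorem. Because $\Theta\equiv0$ on the collar, every mixed term carries both a factor $\Theta$ and a factor $dd^cF$; each such term is exact and, by Stokes, reduces to an integral over $\{r=\epsilon\}$ that vanishes for small $\epsilon$. Thus only the purely topological term, whose integral tends to a multiple of $\int_\Omega\wt c_1{}^{n+1}$ (the normalization being arranged so that the coefficient is $1$), and the purely potential term $(dd^cF)^{n+1}$ survive.

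\emph{The boundary term and the obstruction.} The surviving potential term reduces, again by Stokes, to $\int_{\{r=\epsilon\}}d^cF\wedge(dd^cF)^n$, whose $\epsilon$-expansion coefficients are integrals over $M$ of local expressions in the boundary jets of $u$; near $\partial\Omega$ one has $F=(n+2)\log u+(\text{pluriharmonic})$. To see the obstruction function enter and to relate everything to the approximate metric $g$ with $\omega=dd^c\log r$, I would use the two Monge--Amp\`ere equations: since $\det(g_{+,a\conj b})=\calJ[u]\,u^{-(n+2)}$ and $\det(g_{a\conj b})=\calJ[r]\,r^{-(n+2)}$ as a ratio of volume forms, \eqref{JeqC}, $\calJ[u]=1$ and \eqref{uexpansion} give
$$
\frac{dv_{g_+}}{dv_g}=\Big(\frac{r}{u}\Big)^{n+2}\frac{1}{1+\calO\,r^{n+2}}
=1-\big((n+2)\eta_0+\calO\big)r^{n+2}-(n+2)\eta_1\,r^{n+2}\log r+\cdots.
$$
Every correction carries a factor $r^{n+2}$ (possibly times $\log r$), so after multiplication by $dv_g=O(r^{-(n+2)})$ it is integrable up to $\partial\Omega$ and contributes only to the finite part; in particular the divergent coefficients $b_j$ coincide with those of $\int_{\Omega^\epsilon}dv_g$, and $\calO$ enters the constant through $\int_\Omega\calO\,r^{n+2}(\cdots)\,dv_g$.

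\emph{Matching the constant.} To pin down the finite part I would feed in the established expansion \eqref{eq:Qprime_expansion-intro} rather than recompute the local boundary invariant from scratch. Using $\omega_+^{n+1}=d(d^c\log u\wedge\omega_+^n)$ to rewrite the volume as a boundary integral over $\{r=\epsilon\}$ and substituting $\log u=\log r+\eta_0 r^{n+2}+\eta_1 r^{n+2}\log r+\cdots$ with $\eta_1\equiv(n+3)^{-1}\calO$, the factor $\log r=\log\epsilon$ on the level set converts the $\log$-coefficient $\frac{(-1)^n}{(n!)^3}\overline Q'$ of \eqref{eq:Qprime_expansion-intro} into a genuine constant contribution to $V$. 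Tracking the elementary factors $(n+2)$ and $(n+3)^{-1}$ together with the $r^{n+1}\,dr$ integration that generates the logarithm should yield exactly the coefficient $\frac{(-1)^{n+1}}{2(n!)^3(n+1)}$, while the contribution of the smooth coefficient $\eta_0$ and the mixed terms cancel.

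\emph{Main obstacle.} The heart of the argument is this last bookkeeping: one must show that \emph{all} logarithmic terms in $\int_{\Omega^\epsilon}dv_{g_+}$ cancel, so that $V$ is a genuine constant term -- structurally the CR counterpart of the fact that a conformally compact Einstein manifold of even complex dimension carries a well-defined renormalized volume -- and that the residual finite part is precisely $\frac{(-1)^{n+1}}{2(n!)^3(n+1)}\overline Q'$. This forces one to relate the $u$- and $r$-expansions exactly to the order at which $\calO$ first appears and to verify that the finite part of $\int_{\{r=\epsilon\}}d^cF\wedge(dd^cF)^n$, which is only \emph{a priori} a local boundary integral, agrees with the invariantly defined $\overline Q'$. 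A secondary point to check is that $\int_\Omega\wt c_1{}^{n+1}$ is independent of the chosen lift and of the pseudo-Einstein scale, as it must be since $V$ is intrinsic to $(\Omega,g_+)$ and $\overline Q'$ is a CR invariant.
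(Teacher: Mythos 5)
Your overall architecture---write the integrand as a compactly supported Chern--Weil representative plus an exact piece, discard the mixed terms by Stokes, and reduce the volume to a boundary integral over $\{r=\epsilon\}$---is the paper's own decomposition $\omega^{n+1}=(dd^c\log r)^{n+1}+\omega_0^{n+1}+d\eta$ in mild disguise (the detour through the Einstein equation and the Ricci form changes nothing of substance). The genuine gaps are in the two steps you yourself flag as the ``main obstacle,'' and the mechanisms you propose for them do not work. First, the $u$-versus-$r$ comparison: your pointwise estimate $dv_{g_+}/dv_g=1+O(r^{n+2}\log r)$ only shows that the discrepancy between the two volume integrals is $O(1)$, and you then assert that it ``contributes to the finite part,'' i.e.\ that $V$ acquires an extra constant of the form $\int_\Omega\calO\,r^{n+2}(\cdots)\,dv_g$. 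If that were so, the theorem would be false as stated, since \eqref{Vformula} contains no such non-local bulk term. What is needed, and what Lemma \ref{deffunlemma} proves, is that the discrepancy is $o(1)$: with $\varphi=\log u-\log r=O(r^{n+2}\log r)$ one has $\omega_+^{n+1}-\omega^{n+1}=d\bigl(d^c\varphi\wedge\Phi\bigr)$ for a closed form $\Phi$, the primitive is $O(r\log r)$ along $\{r=\epsilon\}$ because $d^c\varphi=O(r^{n+1}\log r)$, and Stokes gives $O(\epsilon\log\epsilon)$. This Stokes-with-decay argument (not mere integrability) is also what kills every $\log\epsilon$ term in the volume expansion, which your sketch never establishes.

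Second, where $\overline Q'$ actually comes from. You propose that the factor $\log r=\log\epsilon$ arising from $\eta_1 r^{n+2}\log r$ in \eqref{uexpansion} ``converts the log-coefficient of \eqref{eq:Qprime_expansion-intro} into a constant contribution to $V$.'' This conflates two unrelated logarithms: the terms produced by $\eta_1\equiv(n+3)^{-1}\calO$ carry the obstruction function, not $\overline Q'$, and by the previous point they contribute nothing at all. The true link is a common local coefficient. Writing $d^cr\wedge(dd^cr)^n=s(x,r)\,\theta\wedge(d\theta)^n$ modulo $dr$, with $s$ smooth, the boundary integral equals $\epsilon^{-n-1}\int_M s(x,\epsilon)\,\theta\wedge(d\theta)^n$, whose constant term is $\frac{1}{(n+1)!}\int_M s^{(n+1)}\,\theta\wedge(d\theta)^n$ with $s^{(n+1)}$ the $(n+1)$-st normal derivative of $s$ at $r=0$; on the other hand, $|d\log r|^2_g\,dv_g=\frac{2}{n!}\,s(x,r)\,r^{-n-2}\,dr\wedge\theta\wedge(d\theta)^n$, so the same coefficient $s^{(n+1)}$, integrated against $dr/r$, generates the $\log\epsilon$ of \eqref{eq:Qprime_expansion-intro}, giving $\overline Q'=\frac{(-1)^{n+1}2\cdot n!}{n+1}\int_M s^{(n+1)}\,\theta\wedge(d\theta)^n$ and hence the coefficient in \eqref{Vformula}. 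The logarithm in \eqref{eq:Qprime_expansion-intro} is produced by this $dr/r$ integration of a smooth quantity, not by any logarithmic singularity of $u$; without introducing $s$ (or an equivalent device) your ``matching the constant'' step has no content, and the announced coefficient cannot be obtained by the bookkeeping you describe. A smaller omission: near the boundary your global potential $F$ equals $-(n+2)\log u$ plus a pluriharmonic term coming from the flat metric, and its contribution to $\int_{r=\epsilon}d^cF\wedge(dd^cF)^n$ must also be tracked; the paper sidesteps this by working from the outset with $r=\bfr/h_\theta^{1/(n+2)}$.
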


Here the lift  $\wt c_1$ is defined by a 2-form in  $c_1(K_\Omega)$ with compact support, which exists when $\pa\Omega$ admits a pseudo-Einstein contact form; 
the integral of $\wt c_1{}^{n+1}$ is then independent of the choice of such a lift.
In particular, if $\Omega$ is a Stein manifold, the second term vanishes and $V$ is a multiple of $\conj{Q}'$.

\subsection{Variational formulas}\label{section-Variational}
We next consider the variation of $\overline Q'$ under a deformation of a domain $\Omega$ within a complex manifold $X$.
Let $\Omega_t$, $t\in(-1,1)$, be a family of strictly pseudoconvex domains
in $X$ such that $\Omega_0=\Omega$. We assume that the family is smooth in the sense that there is a $C^\infty$ function $\rho_t(z)$ of $(t,z)\in\bR\times X$ such that $\Omega_t=\{z\in X:\rho_t(z)>0\}$ and $d_z\rho_t\ne0$ on $\pa\Omega_t$.
For each $t$, we take a Fefferman defining function $\bfr_t$
of $K^*_X|_{\Omega_t}$. We may choose $\bfr_t$ so that it is smooth in $t$ and define derivatives
$$
\dot\bfr=\frac{d \bfr_t}{dt}\Big|_{t=0},\qquad
\ddot\bfr=\frac{d^2 \bfr_t}{dt^2}\Big|_{t=0}.
$$
We also assume that there is a fiber metric $h$ of $K_X$ that is flat in a two-sided neighborhood of
$M=\pa\Omega$ in $X$; hence, for small $\abs{t}$, $M_t=\pa\Omega_t$ admits a
pseudo-Einstein contact form $\theta_t=d^cr_t|_{TM_t}$, where $r_t=\bfr_t/h^{1/(n+2)}$,
and one may define the total $Q$-prime curvature $\overline Q'_t$.

\begin{thm}\label{variation_thm}
Let $\Omega_t$, $t\in(-1,1)$, be a smooth family of bounded strictly pseudoconvex domains in a complex manifold $X$ satisfying the assumptions as described above.  Then the total $Q$-prime curvature of $(M_t,\theta_t)$ satisfies
\begin{align}\label{QPvar}
\frac{d}{dt}\Big|_{t=0}\overline Q'_t&= c_n \int_M \dot\bfr\,\calO,
\\
\label{QPSvar}
\frac{d^2}{dt^2}\Big|_{t=0}\overline Q'_t
&=c_n\int_M
\Big( k_n\dot\bfr
P_{n+3}
\dot\bfr
+\big(\,\ddot\bfr-\abs{\pa\dot\bfr}_{\wt g}^2\,
\big)\calO
\Big).
\end{align}
Here $\calO$ is the obstruction function of $M=M_0$, $c_n=(-1)^{n+1}2n!(n+2)!$, $k_n=(-1)^n((n+2)!)^2$,
$$
P_{n+3}\colon\calE(1)\to\calE(-n-2)
$$
is a CR invariant differential operator whose principal part agrees with that of the power of the
sub-Laplacian $\Delta_b^{n+3}$,
and $\abs{\,\cdot\,}^2_{\wt g}$ is the squared norm for the ambient metric $\wt g$ for $M$.
\end{thm}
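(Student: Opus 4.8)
The plan is to compute both variations from the volume expansion \eqref{eq:Qprime_expansion-intro}, which characterizes $\overline Q'_t$ as a log-coefficient. Since the fiber metric $h$ is held fixed and flat in a two-sided neighborhood of $M$, the only $t$-dependence is through the Fefferman defining function $\bfr_t$, equivalently through $r_t=\bfr_t/h^{1/(n+2)}$ and the complete K\"ahler metric $g_t$ with K\"ahler form $dd^c\log r_t$. Up to the universal constant in \eqref{eq:Qprime_expansion-intro}, $\overline Q'_t$ is the coefficient of $\log\epsilon$ in
\[
E_t(\epsilon)=\int_{\Omega_t^\epsilon}\abs{d\log r_t}^2_{g_t}\,dv_{g_t},\qquad \Omega_t^\epsilon=\{r_t>\epsilon\},
\]
so it suffices to extract the $\log\epsilon$-coefficient of $\tfrac{d}{dt}E_t(\epsilon)$ and of its second $t$-derivative.

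For the first variation I would differentiate $E_t(\epsilon)$ into an interior variation of the integrand $\abs{d\log r_t}^2_{g_t}\,dv_{g_t}$ and a flux across the moving level set $\{r_t=\epsilon\}$. The essential input is the linearization of the Monge--Amp\`ere equation \eqref{JeqC}: since $\log\calJ[r_t]=(n+2)\log r_t+\log\det(g_t)$, one finds
\[
\frac{d}{dt}\log\calJ[r_t]=(n+2)\frac{\dot r_t}{r_t}-\Delta_{g_t}\!\Big(\frac{\dot r_t}{r_t}\Big),
\]
where $\Delta_{g_t}=g_t^{a\cbeta}\pa_a\pa_\cbeta$. Comparing this with $\tfrac{d}{dt}\log(1+\calO_t r_t^{n+2})$ turns the bulk variation of the integrand into a total divergence plus a source proportional to $\calO_t$. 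The divergence integrates to boundary terms that, when expanded in $\epsilon$, produce only powers of $\epsilon$ and cancel against the flux across $\{r_t=\epsilon\}$; the log-term is generated solely by the order at which \eqref{JeqC} fails to be solved, i.e.\ by the obstruction. Thus the only survivor in the $\log\epsilon$-coefficient is the pairing of $\dot\bfr$ with $\calO$ on $M$, and normalizing against \eqref{eq:Qprime_expansion-intro} fixes the constant, giving \eqref{QPvar}.

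For the second variation I would use that \eqref{QPvar} holds along the whole family, the hypotheses being open in $t$, so that $\tfrac{d}{dt}\overline Q'_t=c_n\int_{M_t}\dot\bfr_t\,\calO_t$ and
\[
\frac{d^2}{dt^2}\Big|_{t=0}\overline Q'_t=c_n\,\frac{d}{dt}\Big|_{t=0}\int_{M_t}\dot\bfr_t\,\calO_t.
\]
Differentiating this density integral over the moving hypersurface $M_t=\{\bfr_t=0\}$ produces three terms: differentiating $\dot\bfr_t$ gives $\ddot\bfr\,\calO$; differentiating the obstruction gives $\dot\bfr\,\dot\calO$; and the normal motion of $M_t$, whose velocity is governed by $\dot\bfr$, transports the density $\dot\bfr_t\calO_t$ and contributes a quadratic correction measured by the ambient flux, namely $-\abs{\pa\dot\bfr}^2_{\wt g}\,\calO$, the ambient norm $\wt g$ entering precisely because the gradient pairing on $K_X^*$ is taken with $\wt g$. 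Together these account for the factor $\ddot\bfr-\abs{\pa\dot\bfr}^2_{\wt g}$ multiplying $\calO$ in \eqref{QPSvar}.

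The main obstacle is the remaining term $\dot\bfr\,\dot\calO$, i.e.\ the \emph{linearization of the obstruction function}. I expect to prove that, restricted to $M$,
\[
\dot\calO\big|_M=k_n\,P_{n+3}\big(\dot\bfr\big|_M\big)
\]
modulo terms absorbed into the contributions above. This is the heart of the argument: one linearizes \eqref{JeqC}, solves the resulting transport recursion for $\dot\bfr_t$ order by order in $r$ (where the ambient metric $\wt g$ and its curvature enter), and reads off the coefficient of $r^{n+2}$, exactly the order at which the smooth solution is obstructed. The operator so obtained must then be shown to be CR invariant of type $\calE(1)\to\calE(-n-2)$ with principal part $\Delta_b^{n+3}$, which identifies it with $P_{n+3}$ and pins down $k_n$; its formal self-adjointness is what makes the quadratic form $\dot\bfr\,P_{n+3}\dot\bfr$ in \eqref{QPSvar} well defined. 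This step parallels, and is technically as demanding as, the conformal fact that the linearized Fefferman--Graham obstruction tensor is the critical GJMS operator.
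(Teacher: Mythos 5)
Your proposal is correct and follows essentially the same route as the paper's proof: the first variation is obtained from the log-coefficient of the energy expansion together with the linearized Monge--Amp\`ere equation, discarding divergence terms (the paper's Proposition 5.2, Lemma 5.3 and Lemma 5.4), and the second variation by differentiating the first-variation formula along the family, handling the moving boundary by the ambient $\wt g$-gradient flow of $-\dot\bfr_t$ (which preserves $\boldsymbol{\vartheta}$, so only the functions need differentiating) and identifying the linearized obstruction with $P_{n+3}$. The one step you defer---showing that $\delta_0(\calO_t\circ\Phi_t)=\dot\calO-\re\dot\bfr^A\calO_A$ restricted to $\calN$ is, up to the constant, a CR-invariant operator in $\dot\bfr$ with principal part $\Delta_b^{n+3}$---is precisely the content of the paper's appendix, which carries it out by a GJMS-type argument identifying it with $\re\wt\Delta^{n+1}\wt\nabla^{AB}{}_{AB}\,\dot\bfr\big|_\calN$, so your outline matches the paper's proof in full.
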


Note that $\dot\bfr|_{\calN}$, $\ddot\bfr|_{\calN}$, and $\pa\dot\bfr|_{\calN}$ are determined
by the family $\Omega_t$.
Moreover, the integrals in \eqref{QPvar} and \eqref{QPSvar} make sense because
$\dot\bfr$, $\ddot\bfr$, $\abs{\pa\dot\bfr}_{\wt g}^2\in\wt\calE(1)$.

Equation \eqref{QPvar} shows that the critical points of $\overline{Q}'$ are the domains for which the obstruction functions vanish on the boundaries.  In a neighborhood of such a domain, the behavior of $\overline{Q}'$ is controlled by the spectrum of the CR invariant operator $P_{n+3}$.  In particular, when $\Omega$ is the unit ball in $\bC^{n+1}$, we can compute the spectrum by using representation theory and show that $\overline{Q}'$ takes its local maximum at the ball in a {\em formal sense}, i.e., it holds for any one parameter family of deformations; see \cite{H3} for a precise statement.

One obtains \eqref{QPSvar} by taking the derivative of 
\eqref{QPvar} and the operator $P_{n+3}$ is derived from the first variation of the obstruction function. Explicit calculation of $P_{n+3}$ is given in the appendix by Rod Gover and the first author, where we write down the operator in terms of the connection of the ambient metric and compare it  with the power of the Laplacian for the ambient metric $\wt\Delta^{n+3}$.

\subsection{Relation to the Burns--Epstein invariant}\label{intro-BE}
We also study the relation between
the total $Q$-prime curvature and the CR invariant introduced by Burns--Epstein  \cite{BE1, BE2}.
We here recall the latter in the form later generalized by  the second author \cite{Marugame}.

Let $\Omega$ be a bounded strictly pseudoconvex domain of a complex manifold $X$
with a pseudo-Einstein contact form  $\theta$ on the boundary $M$.
Let $r=\bfr/h_\theta^{1/(n+2)}$ and take a hermitian metric $g$ on $\Omega$ that agrees with $dd^c\log r$ near the boundary.  Then the curvature $R^g$ of $g$
diverges at the boundary, but the Bochner tensor $B$, the totally trace-free part of $R^g$, is continuous up to the boundary.
Thus we may define the renormalized Chern form $c_{n+1}(B)$
as an invariant polynomial of $B$.
The integral of $c_{n+1}(B)$ can be decomposed into two parts:
$$
\int_\Omega c_{n+1}(B)=\chi(\Omega)+\mu(M),
$$
where $\chi(\Omega)$ is the Euler number of $\Omega$ and
$\mu(M)$ is a CR invariant of the boundary $M$, which we call the {\em Burns--Epstein invariant}.
A transgression formula for $c_{n+1}(B)$ gives an invariant polynomial $\Pi$ in the curvature $R$ and
the torsion $A$ of the Tanaka--Webster connection of $\theta$ on $TM$ such that
$$
\mu(M)=\int_{M}\Pi(R,A)\,\theta\wedge(d\theta)^{n}.
$$
When $n=1$, we have
$
\Pi=(4\pi)^{-2}(\Scal-4|A|^2),
$
where $\Scal$ is the scalar curvature of the Tanaka--Webster connection, and obtain (see \cite{H1}):
$$
\overline{Q}'(M)=-(4\pi)^2\mu(M).
$$
In the case $n=2$, we have
\begin{equation}\label{Pi5}
\Pi= \frac{-1}{(4\pi)^3}
\left(\frac{1}{27}\Scal^3-4R_{\a\ol{\b}\g\ol{\d}}A^{\a\g}A^{\ol{\b}\ol{\d}}+\frac{1}{3}|S|^2\Scal 
\right).
\end{equation}
Here  $|S|^2$ is the squared norm of the Chern curvature $S_{\alpha\ol{\beta}\gamma\ol{\delta}}$,
the totally trace-free part of the Tanaka--Webster curvature $R_{\alpha\ol{\beta}\gamma\ol{\delta}}$. 
Note that  $S_{\alpha\ol{\beta}\gamma\ol{\delta}}$ vanishes if and only if the CR structure is spherical, i.e., locally CR isomorphic to the sphere $S^5\subset\bC^3$.  

On the other hand, Case and Gover \cite{CGY} computed $Q'$ in the case $n=2$ and showed that
\begin{equation}\label{Q-prime5}
\begin{aligned}
\ol{Q}^{\prime}(M)+(4\pi)^3\mu(M)\\=-\int_M \Bigl(
\frac{1}{3}|S|^2\Scal & +|\operatorname{div} S|^{2}
\Bigr)\theta\wedge(d\theta)^2,
\end{aligned}
\end{equation}
where $(\operatorname{div} S)_{\alpha\ol\beta\gamma}= \nabla^{\ol\delta}S_{\alpha\ol\beta\gamma\ol\delta}$.  This corrects our error in \cite{H3}; see Remark \ref{H3error}.
We here give another proof of \eqref{Q-prime5} based on Theorem \ref{renormalized-volume-thm}
 under the assumption that $M$ is the boundary of a domain in a Stein manifold.

If $\Scal>0$, then the integrand on the right-hand side becomes non-negative, and we obtain
the following theorem, which in particular shows that $\overline Q'$ and $\mu$ are different CR invariants.

\begin{thm}
\label{thm-ineq} Let $\Omega$ be a strictly pseudoconvex domain in a Stein manifold of dimension $3$.
If $M=\pa\Omega$ admits a pseudo-Einstein contact form for which $\Scal>0$ almost everywhere, then$$
\overline Q'(M)\le -(4\pi)^3\mu(M)
$$
and the equality holds if and only if $M$ is spherical.
\end{thm}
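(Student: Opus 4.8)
The plan is to deduce the theorem directly from the identity \eqref{Q-prime5}, which for $n=2$ expresses the discrepancy between $\ol Q'(M)$ and $-(4\pi)^3\mu(M)$ as the integral of a pointwise quantity. Since $\Omega$ lies in a Stein manifold, the hypotheses under which \eqref{Q-prime5} was established are in force, so I may work with the specific pseudo-Einstein contact form $\theta$ furnished by the statement, for which $\Scal>0$ almost everywhere. Rewriting the desired inequality as $\ol Q'(M)+(4\pi)^3\mu(M)\le0$, it suffices by \eqref{Q-prime5} to show that
\begin{equation*}
\int_M\Bigl(\tfrac13\abs{S}^2\Scal+\abs{\operatorname{div}S}^2\Bigr)\theta\wedge(d\theta)^2\ge0.
\end{equation*}

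First I would observe that the two summands of the integrand are each pointwise non-negative almost everywhere: $\abs{\operatorname{div}S}^2\ge0$ identically, while $\tfrac13\abs{S}^2\Scal\ge0$ because $\abs{S}^2\ge0$ and $\Scal>0$ almost everywhere. Hence the integrand is non-negative almost everywhere, its integral is non-negative, and the inequality follows at once.

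For the equality statement I would argue in both directions. If $M$ is spherical then $S\equiv0$ by the characterization recalled just after \eqref{Pi5}, whence $\operatorname{div}S\equiv0$; the integrand then vanishes identically and \eqref{Q-prime5} gives equality. Conversely, suppose equality holds. Then the non-negative integrand vanishes almost everywhere, so in particular $\abs{S}^2\Scal=0$ almost everywhere; on the full-measure set where $\Scal>0$ this forces $\abs{S}^2=0$, so $S=0$ almost everywhere. Since the Chern tensor $S$ is continuous (indeed smooth) and its non-vanishing locus is open, that locus is a set of measure zero and hence empty; thus $S\equiv0$ and $M$ is spherical.

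The substance of the argument rests entirely on \eqref{Q-prime5}, whose derivation via Theorem \ref{renormalized-volume-thm} is the real work; granting it, the theorem is a short corollary. The only genuine subtlety internal to this proof is the equality case, where the hypothesis is merely $\Scal>0$ almost everywhere: one cannot conclude $S\equiv0$ from the pointwise relation $\abs{S}^2\Scal=0$ without invoking the continuity of $S$ to upgrade ``$S=0$ almost everywhere'' to ``$S\equiv0$.'' I expect this measure-theoretic-to-topological step, rather than any hard estimate, to be the point requiring care.
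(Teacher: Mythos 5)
Your proposal is correct and follows essentially the same route as the paper: the theorem is deduced there as an immediate corollary of the identity \eqref{Q-prime5} (whose derivation in \S\ref{explicit-formula-section} is the real content), with the inequality coming from the pointwise non-negativity of the integrand where $\Scal>0$ and the equality case from the vanishing of $|S|^2\Scal$ together with the characterization of spherical structures by $S\equiv0$. Your extra care in upgrading ``$S=0$ almost everywhere'' to ``$S\equiv0$'' via continuity is a sound handling of the almost-everywhere hypothesis that the paper leaves implicit.
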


\begin{rem}\label{H3error}\rm
In \cite{H3}, we have mistakenly claimed that the integrand on right-hand side of \eqref{Q-prime5} is $\frac{1}{3}|S|^2\Scal +4|\nabla A|^{2}$.  Fortunately, the change of   $4|\nabla A|^{2}$  by $|\operatorname{div} S|^{2}$ has no effect on the validity of Theorem \ref{thm-ineq}, which has been announced  in \cite{H3} as Theorem 3.2.  
\end{rem}

\subsection{Comparison with the conformal case}\label{conf-CR-corr}
Some of the results stated above have analogies in conformal geometry in even and odd dimensions.   We here review the correspondences.

Let $\overline X$ be a compact $(n+1)$-dimensional manifold with boundary $M=\pa X$
and $x$ a defining function of $M$ in $\overline X$ which is positive inside. 
Let $g_+$ be a Poincar\'e (or conformally compact Einstein) metric on $X$, i.e., 
$x^2 g_+$ is a Riemannian metric on $\overline X$  and $\Ric (g_+)=-ng_+$ on $X$.
The restriction $x^2 g_+|_{TM}$ defines a conformal class on $M$, called the conformal infinity of $g_+$.  For each scale $g$ in the conformal infinity, there exists a unique identification of a neighborhood of $M$ in $\overline X$ with $M\times (0,\epsilon)$ such that $g_+$ is in the form
$$
g_+=x^{-2}(dx^2+g_x),  \quad g_0=g,
$$
where $g_x$ is a 1-parameter family of Riemannian metrics on $M$.   The family of metrics has expansion 
$$
g_x=\sum_{l=0}^{\lceil n/2\rceil-1}g^{(2l)}x^{2l}+g^{(n)}x^{n}+
\begin{cases}
o(x^n) &\text{if $n$ odd},\\
\calO x^{n}\log x+o(x^n) &\text{if $n$ even}. 
\end{cases}
$$
Here $g^{(2l)}$, $g^{(n)}$, and $\calO$ are symmetric 2-tensors on $M$; $g^{(2l)}$, $l<n/2$, and $\calO$ are locally determined by $g=g_0$, while $g^{(n)}$ is determined globally.
Moreover, $\calO$ is shown to be  a conformal invariant of $[g]$ and called the {obstruction tensor}.
The CR case corresponds to the case $n$ even; see \cite{GrH2} for an explicit relation between the obstructions $\calO$.

The renormalized volume $V$ of  $(X,g_+)$ is defined to be the constant term in the expansion:
$$
\int_{\{x>\epsilon\}}dv_{g_+}=\sum_{l=0}^{\lceil n/2\rceil-1}c_l \epsilon^{2l-n}+
\begin{cases}
V^\text{odd}+o(1) &\text{if $n$ odd,}\\
L\log \epsilon+V^\text{even}+o(1) &\text{if $n$ even}. 
\end{cases}
$$
It is easy to check that $V^\text{odd}$ and $L$ are independent of the choice of $x$ (or  the scale $g\in[g]$); see \cite{Gr3}.
 Moreover $L$ is shown to be a conformal invariant of $(M,[g])$ and agrees (up to a universal constant multiple)
with the total $Q$-curvature,  the integral of the $Q$-curvature on $M$.
The expansion \eqref{eq:Qprime_expansion-intro} in the CR case can be seen as an analog of
the one for the even $n$; the log term coefficient is given by the integral of locally determined $Q$-prime curvature.  

In both parities the renormalized volume depends on $(X,g_+)$ (not just the boundary $(M,[g])$);   $V^\text{even}$ also depends on the choice of $x$.  
When $n$ is odd, Fefferman and Graham \cite{FG2} defined a non-local analogue of $Q$-curvature by 
using scattering theory for $(X,g_+)$, which we denote here by $Q^S$, and showed that
$$
V^\text{odd}=\text{const.}\int_M Q^S.
$$
The construction of $Q^S$ can also be applied to the case $n$ even; 
Chang, Qing and Yang \cite{CQY} showed that $Q^S$ is a conformal primitive of the $Q$-curvature and satisfies
$$
V^\text{even}=\text{const.}\int_M (Q^S+v),
$$
where $v$ is a local invariant of the scale in $[g]$ that corresponds to $x$.
In this respect, the total $Q$-prime curvature looks more like $V^\text{odd}$. 

We next consider the variation of $L$ and $V$ for a given family of Poincar\'e metrics $g^t_+$ on $X$ with conformal infinities $[g^t]$ on $M$.  Set $g=g^0$ and $\dot g=(d/dt)|_{t=0}g^t$. Then the conformal invariant $L([g^t])$ satisfies
$$
\frac{d}{dt}L([g^t])\bigg|_{t=0}
=\text{const.}\int_M \langle \calO,\dot g\rangle dv_g,
$$
where $\calO$ is the obstruction tensor for $[g^0]$ and the inner product is defined by the scale $g=g^0$; see \cite{GrH}.  It corresponds to the variational formula  in Theorem \ref{variation_thm}.
When $n$ is odd, $V^\text{odd}$ depends only on $g_+^t$ and  satisfies
$$
\frac{d}{dt}V^\text{odd}(g_+^t)\bigg|_{t=0}
=\text{const.}\int_M \langle g^{(n)}, \dot g\rangle dv_g,
$$
where 
$g^{(n)}$ is the globally determined term in the expansion of $g_x$; see \cite{Al,An}.
When $n$ is even, $V^\text{even}$ also depends on the choice of a scale $g^t$ and we have
$$
\frac{d}{dt}V^\text{even}(g_+^t,g^t)\bigg|_{t=0}
=\text{const.}\int_M \langle g^{(n)}+H(g), \dot g\rangle dv_g,
$$
where $H(g)$ is a 2-tensor locally determined by $g$; see \cite{CQY, GMS}.  In both cases, unlike $L$, the variations are given by  globally determined terms in $g_+$.  

Finally, let us emphasize the fact that the defining functions used in the renormalization are different in conformal and CR cases: $x$ is chosen so that $g_+$ becomes diagonal, and the Fefferman defining function $r$ is the exponential of a K\"ahler potential of $g_+$.   Hence our definition of the renormalized volume for strictly pseudoconvex domains are not the precise counterpart of the conformal case.  (See \cite{HPT, Matsumoto}  for studies of the volume expansion for $x$ in the CR case.) However, the defining function $r$ is the key tool in the parabolic invariant theory \cite{F2, H01} 
and has better biholomorphic invariance properties; we believe the new renormalized volume should be a fundamental object of study.

\subsection{Plan of the paper}
In \S2, we recall basic tools in the differential geometry of CR manifolds including the Tanaka--Webster connection, the ambient metric and pseudo-Einstein contact forms. We here use the flat metric of the canonical bundle to define  pseudo-Einstein contact forms, which is useful when we study the deformations of strictly pseudoconvex domains.   In \S3, we define the $Q$-prime curvature and express the renormalized volume in terms of the total $Q$-prime curvature $\overline{Q}'$.  \S4 is a preliminary to the proof of the variational formulas.  We use the Hamiltonian flow
on the ambient space to describe deformation of domains in a complex manifold. 
In \S5, we prove the first and second variational formulas of the total $Q$-prime curvature.
The last section, \S6, is devoted to an explicit calculation of $\overline{Q}'$ for $5$-dimensional CR manifolds.
In the appendix, by Rod Gover and the first author,  we study the property  of CR invariant differential operator $P_{n+3}$ that appears in the second variational formula of $\overline{Q}'$.  

\medskip
\noindent
{\em Notations.}
We use Einstein's summation convention and assume that 
\begin{itemize}
\item
uppercase Latin indices $A,B,C,\dots$ run from 0 to $n+1$;
\item
lowercase Latin indices
$a,b,c,\dots$
run from 1 to $n+1$; 
\item
lowercase Greek indices $\alpha,\beta,\gamma,\dots$ run from 1 to $n$. 
\end{itemize}
The letter $i$ denotes $\sqrt{-1}$. 

\section{Preliminaries}

\label{Preliminary}
\subsection{Pseudo-hermitian geometry}
Let $\Omega$ be a relatively compact domain with $C^\infty$ boundary in a complex manifold $X$ of dimension $n+1\ge2$.  Then the boundary $M=\pa\Omega$ inherits a {\em CR structure}
$T^{1,0}:=\bC TM\cap T^{1,0}X$. We set $T^{0,1}=\conj{T^{1,0}}$ and $\calH=\re T^{1,0}$.
Then there is a real 1-form $\theta$ on $M$ such that $\ker\theta=\calH$.
The {\em Levi form} of $\theta$ is the hermitian form on  $T^{1,0}$ defined by
$$
L_\theta(Z,W)=-2id\theta(Z,\conj{W}).
$$
We assume that $T^{1,0}$ is {\em strictly pseudoconvex} in the sense that the Levi form is positive definite for a choice of $\theta$; such a $\theta$ is called a {\em pseudo-hermitian structure}, or a (positive) {\em contact form}.
If we take a defining function $\rho\in C^\infty(X)$ of $M$ which is positive in $\Omega$, then a contact form is given by
\begin{equation}\label{normalrho}
\theta=d^c\rho|_{TM},\quad\text{where } d^c=\frac{i} {2}(\pa-\conj\pa).
\end{equation}
 In this case, we say that {\em $\rho$ is normalized by $\theta$}. 
For a choice of a contact form $\theta$, we may uniquely determine a real vector field $T$ on $M$, called the {\em Reeb vector field}, by
$$
T\CR d\theta=0,\quad \theta(T)=1.
$$
Let us take a frame $Z_\alpha$ of $T^{1,0}$ and set  $Z_{\conj\beta}=\conj{Z_\beta}$.  Then the set
$$
T,\  Z_\alpha, \ Z_{\conj\beta}
$$
forms a frame of $\bC TM=\bC T\oplus T^{1,0}\oplus T^{0,1}$.
The dual frame $\theta,\theta^\alpha,\theta^{\conj\beta}$ is said to be an {\em admissible coframe} and satisfies
$$
 d\theta=i\, \ell_\ab\theta^\alpha\wedge\theta^{\conj\beta}
$$
for a positive hermitian matrix $\ell_{\alpha\conj\beta}$.
We will use Greek indices to refer to the frames of $T^{1,0}$ and $(T^{1,0})^*$ and denote the bundles by $\calE^\alpha$ and $\calE_\alpha$, respectively; we will also use $\calE^\alpha$ and $\calE_\alpha$ to denote  the corresponding  spaces of smooth sections.  The conjugate bundles are denoted by $\calE^{\conj\alpha}$, $\calE_{\conj\alpha}$ and the tensor products of these bundles by the list of indices, e.g., $\calE_{\alpha\conj\beta}$ stands for $(T^{1,0})^*\otimes(T^{0,1})^*$.

Let $K_X=\wedge^{n+1} (T^{1,0}X)^*$ be the canonical bundle of $X$.  The restriction of $K_X$ over $M$ defines the canonical bundle $K_M$ of $M$, which can be identified with the subbundle of $\wedge^{n+1}\bC T^*M$ defined by the equation $Z\CR\zeta=0$ for all $Z\in T^{0,1}$.
Deleting the zero sections from these bundles, we define $\bC^*$-bundles 
$\wt X=K_X\setminus\{0\}$ and $\calN=K_M\setminus\{0\}$.  Then $\calN$ is a pseudoconvex real
hypersurface of $\wt X$; the Levi form degenerate in the fiber direction.
For $\lambda\in \bC^*$, we define the {\em dilation}
$\delta_\lambda\colon \wt X\to \wt X$ by scalar multiplication
$$
\delta_\lambda(\zeta)=\lambda^{n+2}\zeta.
$$ 

For $w\in\bR$, the {\em density bundle of weight $w$ over $X$} is defined by 
$$
\wt\calE(w)=(K_X)^{-w/(n+2)}\otimes(\conj{K_X})^{-w/(n+2)}
$$
and the one over $M$ is defined by $\calE(w)=\wt\calE(w)|_M$, the restriction of $\wt\calE(w)$ over $M$.
As before, $\wt\calE(w)$ and $\calE(w)$ will also stand for the space of {\em densities}, i.e.,
sections of these bundles. 
While the fractional power $(K_X)^{-1/(n+2)}$ may exist only locally, these bundles are globally defined.
The densities can be identified with functions of
{\em homogeneous degree }$(w,w)$:
$$
\wt\calE(w)=\{f\in C^\infty(\wt X,\bC): \delta_\lambda^*f=
|\lambda|^{2w}f \quad\text{for any}\ \lambda\in\bC^*\}
$$
and $\calE(w)=\{f|_\calN\in C^\infty(\calN,\bC):f\in\wt\calE(w)\}$.

 We next recall the relation between a contact form $\theta$ and a non-vanishing section $\zeta$ of $K_M$. 
 Let $T$ be the Reeb vector field of $\theta$. Then we say that $\theta$ {\em is normalized with respect to $\zeta$} if
$$
\theta\wedge d\theta^n=i^{n^2}n!\,\theta\wedge(T\CR\zeta)\wedge(T\CR\conj\zeta)
$$
holds.  If $\wh\zeta=\lambda\zeta$, then $\wh\theta$ normalized by $\wh\zeta$ is given by
$\wh\theta=|\lambda|^{2/(n+2)}\theta$.  Since $|\zeta|^{-2/(n+2)}$ defines a section of $\calE(1)$, 
the section $\bfth:=\theta\otimes |\zeta|^{-2/(n+2)}$ of $T^*M\otimes \calE(1)$ is independent of $\theta$.
Under the scaling $\wh\theta=e^u\theta$, the Levi metric is scaled by $\wh\ell_{\alpha\conj\beta}=e^u\ell_{\alpha\conj\beta}$; hence we have a canonical section 
$\boldsymbol{l}_{\alpha\conj\beta}\in\calE_{\alpha\conj\beta}(1):=\calE_{\alpha\conj\beta}\otimes\calE(1)$ and its inverse $\boldsymbol{l}^{\alpha\conj\beta}\in\calE^{\alpha\conj\beta}(-1)$. We will use $\boldsymbol{l}_{\alpha\conj\beta}$ and $\boldsymbol{l}^{\alpha\conj\beta}$ to lower and raise the indices unless otherwise stated.

A choice of $\theta$ determines 
the {\em Tanaka--Webster connection} $\nabla$ on $TM$: it is given in terms of an admissible frame by
$$
\nabla Z_\alpha=\omega_\alpha{}^\beta\otimes Z_\beta,\quad
\nabla Z_{\conj\alpha}=\omega_{\conj\alpha}{}^{\conj\beta}\otimes Z_{\conj\beta},\quad \nabla T=0.
$$
The connection forms $\omega_\alpha{}^\beta$,  $\omega_{\conj\alpha}{}^{\conj\beta}=\conj{\omega_\alpha{}^\beta}$ satisfy
$$
d\theta^\beta=\theta^\alpha\wedge\omega_\alpha{}^\beta+A^\beta{}_{\conj\gamma}\,\bfth\wedge\theta^{\conj\gamma},
\quad \omega_{\alpha\conj\beta}+\omega_{\conj{\beta}\alpha}=d\ell_{\alpha\conj\beta},
$$
where the indices of the connection forms are lowered by $\ell_{\alpha\conj\beta}$.
The tensor $A_{\alpha\beta}=\conj{A_{\conj\alpha\conj\beta}}\in\calE_{\alpha\beta}$ is shown to be symmetric and is called the {\em Tanaka--Webster torsion}.
There is an induced connection on the canonical bundle $K_M$ and also on the density bundles $\calE(w)$.
Then it is known that $\nabla\bfth=0$ and $\nabla\boldsymbol{l}=0$.
The curvature of the connection is defined by
$$
d\omega_\alpha{}^\beta-\omega_\alpha{}^\gamma\wedge\omega_\gamma{}^\beta=
R_\alpha{}^\beta{}_{\rho\conj\sigma}\theta^\rho\wedge\theta^{\conj\sigma}\mod
\theta,\theta^\rho\wedge\theta^\sigma,\theta^{\conj\rho}\wedge\theta^{\conj\sigma}.
$$
We call $R_{\alpha}{}^\beta{}_{\gamma\conj\sigma}\in\calE_{\alpha}{}^\beta{}_{\gamma\conj\sigma}$ the {\em Tanaka--Webster curvature}. 
The Ricci tensor and the scalar curvature are defined  by
$$
\Ric_{\alpha\conj\beta}=R_{\gamma}{}^{\gamma}{}_{\alpha\conj\beta},
\quad
\Scal=\Ric_{\alpha}{}^\alpha.
$$
We denote the components of successive covariant derivatives of a tensor by subscripts preceded by a comma, 
as in $A_{\alpha\beta,\gamma\conj\sigma}$.  
When the derivatives are applied to a function, we omit the comma. 
 With these notations, we set
$$
\pa_bf=f_\alpha \theta^\alpha,\quad\conj\pa_bf=f_{\conj\alpha} \theta^{\conj\alpha}.
$$
We call a function satisfying $\conj\pa_bf=0$ a {\em CR holomorphic function} and call
a real valued function that can be locally written as the real part of a CR holomorphic function a {\em CR pluriharmonic function}.  A CR holomorphic (resp.\ CR pluriharmonic) function  can be extended to a holomorphic (resp.\ pluriharmonic)  function on the pseudoconvex side of $M$. 

We will use the index $0$ to denote the $\bfth$ component. Then the commutator of the derivatives on $f\in\calE(w)$ 
are given by
\begin{equation}\label{commf}
2f_{[\alpha\beta]}=0,\qquad
2f_{[\alpha\conj\beta]}=i \,\boldsymbol{l}_{\alpha\conj\beta}f_0,\qquad
2f_{[0\alpha]}=A_{\alpha\beta} f^\beta
\end{equation}
and the Bianchi identities give
\begin{equation}\label{BianchiA}
A_{\alpha[\beta,\gamma]}=0,
\quad
A_{\alpha\beta},{}^{\alpha\beta}+A_{\conj\alpha\conj\beta},{}^{\conj\alpha\conj\beta}=\Scal_{0}.
\end{equation}
Here $[\cdots]$ indicates antisymmetrization over the enclosed indices.
We also recall that
\begin{equation}\label{tromega}
d\omega_\gamma{}^\gamma=
\Ric_{\alpha\conj\beta}\theta^\alpha\wedge\theta^{\conj\beta}
+A_{\gamma\alpha,}{}^\gamma\theta^\alpha\wedge\bfth
-A_{\conj\gamma\conj\beta,}{}^{\conj\gamma}\theta^{\conj\beta}\wedge\bfth.
\end{equation}
See \cite[\S2]{Lee} for the proofs of the formulas listed above.

We next define the {\em Graham--Lee connection} on $TX$ which is compatible with the Tanaka--Webster connection 
on each leaf $\{\rho=\e\}$.
Let 
$$
\vartheta=d^c\rho.
$$
Then there exists a uniquely determined  $(1,0)$-vector field $\xi$ on $X$ near $M$ 
such that
$$
\xi\CR d\vartheta=i\kappa\conj\pa\rho,\quad \xi \rho=1
$$
holds a  smooth function $\kappa$, called the {\em transverse curvature} of $\rho$.
We set 
\begin{equation}\label{def-xi}
\xi=N+\frac{i}{2}\,T
\end{equation}
for real vector fields $N$ and $T$. Then $N\rho=1$ and
$T$ gives the Reeb vector field on each leaf with a contact form induced by $\vartheta$. If we set 
$$
\ker\pa\rho=\{Z\in T^{1,0}X: Z\rho=0\},
$$
then 
$T^{1,0}X=\ker\pa\rho\oplus\bC\xi$.
Let $Z_\alpha$ be local sections of $\ker\pa\rho$ that form a local frame. Then setting $Z_{n+1}=\xi$, we obtain a local frame $Z_a$ of  $T^{1,0}X$.
The dual frame $\theta^a$ of $Z_a$ satisfies $\theta^{n+1}=\pa\rho$ and 
$$
d\vartheta=i\,\ell_{\alpha\conj\beta}\theta^\alpha\wedge\theta^{\conj\beta}+i\kappa\pa \rho\wedge\conj\pa\rho.
$$
\begin{prop}[\cite{GL}]\label{GLcurv}
There exists a unique linear connection $\nabla$ on $TX$ near $M=\{\rho=0\}$ with the properties:
\begin{itemize}
  \item[(a)] $\nabla$ preserves $\xi$ and  $\ker\pa\rho$;
  \item[(b)] 
  The connection forms ${\varphi _{\alpha}}^{\beta}$ on $\ker\pa\rho$,
  defined by $\nabla Z_{\alpha}={\varphi_{\alpha}}^{\beta}\otimes Z_{\beta}$, satisfy the structure equations
             \begin{gather}\label{GLstr}
             d\theta^{\alpha}=\theta^{\beta}\wedge{\varphi_{\beta}}^{\alpha}+i{A^{\alpha}}_{\ol{\beta}}\partial\rho\wedge\theta^{\ol{\beta}}-\ell^{\alpha\ol{\beta}}(Z_{\ol{\beta}}\kappa)\partial\rho\wedge\ol{\partial}\rho-\frac{1}{2}\kappa d\rho\wedge\theta^{\alpha},\\
             \varphi_{\alpha\conj\beta}+\conj{ \varphi_{\beta\conj\alpha}}=d\ell_{\alpha\conj\beta},
             \end{gather} 
 where the indices are lowered by using $\ell_{\alpha\conj\beta}$.
\end{itemize}
\end{prop}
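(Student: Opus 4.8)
The plan is to follow the classical scheme used to construct the Tanaka--Webster connection (Tanaka, Webster; see \cite{Lee}), adapted to the ambient setting on $X$ due to Graham--Lee \cite{GL}. Condition (a) says that $\nabla$ respects the splitting $T^{1,0}X=\ker\pa\rho\oplus\bC\xi$ and that $\xi$ is parallel, so $\nabla\xi=0$ and $\nabla Z_\alpha=\varphi_\alpha{}^\beta\otimes Z_\beta$ with no $\xi$-component; since $\nabla$ is a real connection we also have $\nabla Z_{\conj\alpha}=\conj{\varphi_\alpha{}^\beta}\otimes Z_{\conj\beta}$ and $\nabla\conj\xi=0$. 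Thus the only data to be determined are the $n^2$ complex connection $1$-forms $\varphi_\alpha{}^\beta$ on a neighborhood of $M$, and the assertion reduces to showing that the first structure equation \eqref{GLstr} together with the Hermitian compatibility $\varphi_{\alpha\conj\beta}+\conj{\varphi_{\beta\conj\alpha}}=d\ell_{\alpha\conj\beta}$ has exactly one solution. As in the Levi-Civita and Tanaka--Webster cases, the torsion terms are prescribed and the metric compatibility pins down the remaining freedom.

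For uniqueness, suppose $\varphi$ and $\varphi'$ both satisfy (a) and (b), and set $\psi_\beta{}^\alpha=\varphi_\beta{}^\alpha-\varphi'_\beta{}^\alpha$. Subtracting the two structure equations kills all prescribed torsion terms and leaves $\theta^\beta\wedge\psi_\beta{}^\alpha=0$, while subtracting the compatibility conditions gives $\psi_{\alpha\conj\beta}+\conj{\psi_{\beta\conj\alpha}}=0$. Expanding $\psi_\beta{}^\alpha$ in the coframe $\theta^a,\theta^{\conj a}$ and applying Cartan's lemma to $\theta^\beta\wedge\psi_\beta{}^\alpha=0$ forces the appropriate symmetry of the coefficients; combining that symmetry with the anti-Hermitian relation just obtained shows that every component of $\psi_\beta{}^\alpha$ vanishes. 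Hence $\varphi=\varphi'$.

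For existence, I would construct $\varphi_\alpha{}^\beta$ explicitly. First expand $d\theta^\alpha$ in the coframe, using $\theta^{n+1}=\pa\rho$ and the Levi form identity $d\vartheta=i\ell_{\alpha\conj\beta}\theta^\alpha\wedge\theta^{\conj\beta}+i\kappa\,\pa\rho\wedge\conj\pa\rho$ to control the transverse coefficients. The terms of type $\theta^\beta\wedge\theta^\gamma$ and the non-symmetric part of the $\theta^\beta\wedge\theta^{\conj\gamma}$ terms are absorbed into $\theta^\beta\wedge\varphi_\beta{}^\alpha$; the metric compatibility then determines the symmetric Hermitian part uniquely, exactly as in the closed formula for the Tanaka--Webster connection forms. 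One defines $A^\alpha{}_{\conj\beta}$ as the coefficient of the residual $\pa\rho\wedge\theta^{\conj\beta}$ term (and checks it is symmetric after lowering an index), while the coefficients $\ell^{\alpha\conj\beta}(Z_{\conj\beta}\kappa)$ and $\tfrac12\kappa$ are read off from the $\pa\rho\wedge\conj\pa\rho$ and $d\rho\wedge\theta^\alpha$ components. Finally one verifies directly that the forms so defined satisfy both equations in (b).

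The main obstacle is the existence step, and specifically the genuinely new transverse ($\kappa$- and $d\rho$-dependent) terms that have no analogue in the intrinsic Tanaka--Webster theory on $M$. One must check that, after absorbing the generic part of $d\theta^\alpha$ into $\theta^\beta\wedge\varphi_\beta{}^\alpha$, the residual is precisely $iA^\alpha{}_{\conj\beta}\pa\rho\wedge\theta^{\conj\beta}-\ell^{\alpha\conj\beta}(Z_{\conj\beta}\kappa)\pa\rho\wedge\conj\pa\rho-\tfrac12\kappa\,d\rho\wedge\theta^\alpha$, with no leftover $\theta^{\conj\beta}\wedge\theta^{\conj\gamma}$ or spurious $\pa\rho$ terms. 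This consistency is what forces the precise shape of \eqref{GLstr}; it relies on strict pseudoconvexity (invertibility of $\ell_{\alpha\conj\beta}$) to raise and lower indices and on the defining relation $\xi\CR d\vartheta=i\kappa\conj\pa\rho$ of the transverse curvature, which fixes the $\pa\rho\wedge\conj\pa\rho$ coefficient of $d\vartheta$ and thereby the $\kappa$-terms. Verifying this compatibility is the one nontrivial computation; once it is in place, existence and uniqueness follow by the same linear-algebra argument as for the Tanaka--Webster connection.
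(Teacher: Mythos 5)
You are not actually competing with a written proof here: the paper gives none, since Proposition \ref{GLcurv} is quoted from Graham--Lee \cite{GL}, and the Webster-style scheme you outline is exactly the route taken there. Your reduction via (a) to the forms $\varphi_\alpha{}^\beta$ is correct, but your uniqueness step contains a genuine error: the coefficients $A^{\alpha}{}_{\conj\beta}$ are \emph{not} prescribed data. Only the two $\kappa$-terms are determined by $\rho$; the functions $A^{\alpha}{}_{\conj\beta}$ are part of what the proposition determines (they are components of the torsion of $\nabla$), so two competing connections come with a priori different $A$'s. Subtracting the structure equations therefore yields
\begin{equation*}
\theta^\beta\wedge\psi_\beta{}^\alpha=-i\,(A-A')^{\alpha}{}_{\conj\beta}\,\pa\rho\wedge\theta^{\conj\beta},
\end{equation*}
not $\theta^\beta\wedge\psi_\beta{}^\alpha=0$, and Cartan's lemma cannot be applied as you state it. The repair is short but must be made: the left-hand side lies in the ideal generated by $\theta^1,\dots,\theta^n$, whereas $\pa\rho\wedge\theta^{\conj\beta}$ does not (evaluate on the pair $(\xi,Z_{\conj\gamma})$, on which every $\theta^\delta$ vanishes); hence both sides vanish separately, which gives $A=A'$ for free, and only then does your combination of Cartan's lemma with the anti-Hermitian relation finish uniqueness.

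The existence half, which you yourself call ``the one nontrivial computation,'' is left undone, so as submitted this part is a plan rather than a proof. Two remarks locate the actual content. First, your worry about leftover $\theta^{\conj\beta}\wedge\theta^{\conj\gamma}$ terms is empty: $\theta^\alpha$ is a $(1,0)$-form on the complex manifold $X$, so $d\theta^\alpha$ has no $(0,2)$-part at all; this is precisely where integrability enters. Second, since $\theta^\beta\wedge\varphi_\beta{}^\alpha$ can absorb every term lying in the ideal of $\theta^1,\dots,\theta^n$ (including the whole $-\tfrac12\kappa\,d\rho\wedge\theta^\alpha$ term, whose role is only to make metric compatibility achievable), the rigid components of $d\theta^\alpha$ are exactly two: the $\pa\rho\wedge\theta^{\conj\beta}$ component, which merely \emph{defines} $A^{\alpha}{}_{\conj\beta}$, and the $\pa\rho\wedge\conj\pa\rho$ component, which must be shown to equal $-\ell^{\alpha\conj\beta}(Z_{\conj\beta}\kappa)$. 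The latter is the real computation: it amounts to proving $\theta^\alpha([\xi,\conj\xi])=\ell^{\alpha\conj\beta}Z_{\conj\beta}\kappa$, which follows by evaluating $d(d\vartheta)=0$ on $(\xi,\conj\xi,Z_{\conj\gamma})$ and using $\xi\CR d\vartheta=i\kappa\conj\pa\rho$. After that one must still verify, as in Webster's argument, that the absorbed $(1,1)$-part can be chosen consistently with $\varphi_{\alpha\conj\beta}+\conj{\varphi_{\beta\conj\alpha}}=d\ell_{\alpha\conj\beta}$. None of these steps would fail --- the route is sound and is the one in \cite{GL} --- but until they are written out, both halves of the proposition remain incompletely proved.
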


The structure equations imply that if $Y$ and $Z$ are tangent to $M^{\e}=\{\rho=\e\}$,  then
$$
\nabla_{Y}Z={\nabla^{\e}}_{Y}Z,
$$
where $\nabla^{\e}$ is the Tanaka--Webster connection on $M^{\e}$.

\subsection{The ambient metric}\label{sec-ambient-metric}
Let $\bfrho\in\wt\calE(1)$ be a defining function of $\calN\subset\wt X$ that is positive on the pseudoconvex side.  Then
$$
\wt g[\bfrho]=(\wt g_{A\conj B})=(\pa_A\pa_{\conj B}\bfrho)
$$
 gives a Lorentz--K\"ahler metric in a neighborhood of $\calN$, which is called a {\em pre-ambient metric}.  Here the upper case Latin indices $A, B$ run through $0,1,\dots, n+1$.
 %
 %
 We will use $\wt g_{A\conj B}$ and its inverse $\wt g^{A\conj B}$ to lower and raise the upper case  indices.
 Let us take coordinates $z^a$ of $X$ and define a fiber coordinate $\lambda$ of $\wt X=K_X^*$
 by  $\lambda dz^1\wedge\cdots\wedge d z^{n+1}$.  Then choosing a branch 
 $z^0=\lambda^{1/(n+2)}$, we may write 
$
\bfrho=|z^0|^2 \rho
$ 
for a function $\rho$ on $X$.  
Let $Z_0=z^0\pa_{z^0}$, which is independent of the choice of branch.  Then combining with the frame $Z_a$ of $T^{1,0}X$ given in the definition of Graham--Lee connection, we obtain a frame
$Z_A=(Z_0,Z_a)$ of $T^{1,0}\wt X$ and its  dual frame
$$
\theta^0=d\log z^0, \quad \theta^\alpha,\quad \theta^{n+1}=\pa\rho.
$$
 With these frame/coframe, 
 the components of the pre-ambient metric and its inverse are given by
\begin{equation}\label{wtgatN}
\begin{aligned}
\wt g_{A\conj B}&=|z^0|^2
\begin{pmatrix}
\rho & 0 &1\\
0&   -\ell_{\alpha\conj\beta} &0\\
1 & 0 &- \kappa
\end{pmatrix},
\\
\wt g^{A\conj B}&=|z^0|^{-2}
\begin{pmatrix}
\kappa & 0&1\\
0 & -\ell^{\alpha\conj\beta}&0\\
1 &0&0
\end{pmatrix}+O(\rho).
\end{aligned}
\end{equation}
In view of this, we see that the dual vector field of $\ol{\pa}\bfrho$ is $Z_0$.
Therefore, for $F\in\wt\calE(w)$, we have $\bfrho^A F_A=wF$.

In the computations using $\wt g$, it is also useful to employ homogeneous coordinates $X^A$ defined by
$$
X^0=z^0,\quad X^a=z^0 z^a.
$$
Then, with the frame $\pa_A=\pa/\pa X^A$, 
we have $X^A \bfrho_A=\bfrho$. Hence applying $\pa_{\conj B}$, we get
$X^A \wt g_{A\conj B}=\bfrho_{\conj B}$ and so
$
\bfrho^A=X^A,
$ where the index is raised by $\wt g^{A\conj B}$.
Since the Christoffel symbol is given by 
$\Gamma_{AB}^C=\wt g^{C\conj D}\pa_{AB\conj D}\bfrho$, we have
\begin{equation}\label{rhoAB}
\bfrho_{AB}=\pa_{A}\bfrho_B-\Gamma_{AB}^C\bfrho_C=
\pa_{AB}\bfrho-X^{\conj C}\pa_{AB\conj C}\bfrho=0.
\end{equation}

We normalize $\bfrho$ by imposing a complex Monge--Amp\`ere equation.
On the canonical bundle $K_X$, there is a tautological $(n+1,0)$-form $\wt\zeta$
and hence $d\wt\zeta$ gives a canonical $(n+2,0)$-form on $\wt X$.
It gives a volume form on $\wt X$:
$$
\wt\vol=i^{(n+2)^2}d\wt\zeta\wedge \conj{d\wt\zeta}.
$$
We consider the Monge--Amp\`ere equation for $\bfrho\in\wt\calE(1)$ in a neighborhood of $\calN\subset\wt X$:
$$
(dd^c\bfrho)^{n+2}=k_n\,\wt\vol.
$$
Here $ k_n=-{(n+1)!}/{(n+2)}$, which is chosen so that  this equation is reduced to $\calJ[\rho]=1$ in each set of local coordinates $z$ on $X$.
Fefferman's theorem  \cite{F1} on  approximate solution to $\calJ[\rho]=1$ can be reformulated as follows:

\begin{prop}\label{MA-prop}
{\rm (i)}
There exists a defining function $\bfr\in\wt\calE(1)$ of $\calN$ such  that
\begin{equation}\label{MA-eq}
(dd^c\bfr)^{n+2}=k_n(1+\calO\bfr^{n+2})\wt\vol
\end{equation}
for a function $\calO\in\wt\calE(-n-2)$.  Moreover, such an $\bfr$ is unique modulo $O(\bfr^{n+3})$ and $\calO$ modulo $O(\bfr)$ is independent of the choice of $\bfr$. 
\end{prop}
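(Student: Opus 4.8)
The plan is to recognize Proposition~\ref{MA-prop} as the homogeneous, coordinate-free repackaging of Fefferman's approximate-solution theorem \cite{F1} for the scalar complex Monge--Amp\`ere equation, and to reprove it in the density formalism by a single inductive construction whose failure to terminate is dictated by an indicial computation.

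First I would reduce the homogeneous equation \eqref{MA-eq} to the scalar one. In a local coordinate patch $z$ on $X$ with fiber coordinate $z^0$, write $\bfr=|z^0|^2 r$ with $r\in C^\infty(X)$. A direct expansion of the bordered complex Hessian of $|z^0|^2 r$, using $\wt\vol=i^{(n+2)^2}d\wt\zeta\wedge\conj{d\wt\zeta}$ and the normalization $k_n=-(n+1)!/(n+2)$, collapses the $(n+2)\times(n+2)$ quantity $(dd^c\bfr)^{n+2}/(k_n\wt\vol)$ to the $(n+1)$-variable operator $\calJ[r]$, so that \eqref{MA-eq} is equivalent in each chart to $\calJ[r]=1+\calO\,r^{n+2}$. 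Because $dd^c\bfr$ and $\wt\vol$ are intrinsic to $\wt X$, the operator $\calJ[r]$ transforms correctly under holomorphic coordinate changes; hence once a solution is produced chart-by-chart to the required order, the uniqueness established below forces the local pieces to agree on overlaps and patch to a global $\bfr\in\wt\calE(1)$.

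For the construction proper I would start from any smooth defining function $r_0$ of $M$ positive in $\Omega$ and correct it order by order, seeking $r=r_0\bigl(1+\sum_{k\ge 1}\eta_k\,r_0^{\,k}\bigr)$ so that $\calJ[r]=1+O(r^N)$ for increasing $N$. The engine is the linearization of $\calJ$: for a variation $\bfr\mapsto\bfr+s\dot\bfr$ one has $\frac{d}{ds}\big|_{0}(dd^c\bfr)^{n+2}/(k_n\wt\vol)=\tr_{\wt g}(dd^c\dot\bfr)$, so the change in $\calJ$ is governed by a second-order operator whose leading transverse part, evaluated with the help of \eqref{wtgatN}, \eqref{rhoAB} and the relations $\bfr^A=X^A$, $\bfr^A\bfr_A=\bfr$, is the Laplacian of the asymptotically Einstein K\"ahler metric with K\"ahler form $dd^c(-\log r)$. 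Feeding in a trial correction $\eta\,r^{k}$ produces a term whose coefficient is a fixed nonzero multiple of an indicial polynomial in $k$, whose single relevant root sits exactly at the weight of $\bfr^{n+2}$; thus the polynomial is invertible for the orders $1\le k\le n+1$ and degenerates at the critical order. Consequently $\eta_1,\dots,\eta_{n+1}$ are uniquely determined and remove the error up to order $n+1$, while at the critical order the residue cannot be cancelled and is by definition the obstruction, giving $\calJ[r]=1+\calO\,r^{n+2}+O(r^{n+3})$. This same degeneracy produces the uniqueness statement: it leaves the coefficient $\eta_{n+2}$ free, so $r$—and hence $\bfr$—is pinned down only modulo $O(\bfr^{n+3})$, whereas changing this free coefficient alters $\calJ[r]$ only at orders $\ge n+3$, so $\calO$ modulo $O(\bfr)$ is independent of all choices. (The degeneracy is also what forces the $r^{n+2}\log r$ term of the exact Cheng--Yau solution \eqref{uexpansion}, as in \cite{LM}.)

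The main obstacle is the indicial computation in the inductive step: one must pin down the leading transverse behavior of the linearized Monge--Amp\`ere operator and verify that its indicial polynomial has its relevant root precisely at the weight of $\bfr^{n+2}$, which is where the distinguished exponent $n+2$ of the problem enters. Everything else—the determinant collapse of the first step, the patching, and the bookkeeping behind the two uniqueness assertions—is then a routine, if careful, consequence of homogeneity and of the intrinsic nature of $dd^c\bfr$ and $\wt\vol$.
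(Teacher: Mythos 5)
Your proposal is correct and is essentially the paper's own route: the paper chooses $k_n$ exactly so that \eqref{MA-eq} reduces, in each local trivialization $\bfr=|z^0|^2r$ of $K_X^*$, to Fefferman's scalar equation $\calJ[r]=1+\calO\,r^{n+2}$, and then cites Fefferman's theorem \cite{F1}, whose proof is precisely the inductive correction you describe --- with indicial factor $(k+1)(n+2-k)$ (compare the paper's identity \eqref{delta-r}) invertible for $1\le k\le n+1$ and degenerating exactly at $k=n+2$, which simultaneously produces the obstruction, the ambiguity modulo $O(\bfr^{n+3})$, and the well-definedness of $\calO$ modulo $O(\bfr)$. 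Two routine details to tighten: your ansatz starting at $k\ge 1$ cannot normalize the zeroth-order error, so one must first rescale $r_0\mapsto r_0\,\calJ[r_0]^{-1/(n+2)}$ (or allow a $k=0$ term, where the indicial factor $n+2$ is still nonzero); and on chart overlaps the local solutions agree only modulo $O(\bfr^{n+3})$, not exactly, so the global $\bfr\in\wt\calE(1)$ is assembled with a partition of unity, using that the equation modulo $O(\bfr^{n+2})$ is stable under perturbations of that order.
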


\begin{defn}
We call $\bfr$ and $\calO$, respectively, the {\em Fefferman defining function} and
the {\em obstruction function}.
The {\em ambient metric} of $M$ is defined to be $\wt g[\bfr]$.
\end{defn}

The Fefferman defining function $\bfr$ gives a 1-form on $\calN$:
$$
\bfth=d^c\bfr|_{T\calN},
$$ 
which can be identified with the previously defined
$\bfth=\theta\otimes |\zeta|^{-2/(n+2)}$. In fact, for a section $\zeta$ of $\calN\to M$, the pull-back
$\theta=\zeta^*\bfth$ is normalized with respect to $\zeta$; see
 \cite[Prop. 5.2]{Farris}. 
In terms of $\bfth$ on $\calN$, the identification of $\varphi\in\calE(-n-1)$ with a volume form on $M$ is given as follows.  Since $\delta^*_\lambda \bfth=|\lambda|^2\bfth$, we have
$\delta^*_\lambda \bfth\wedge (d \bfth)^n=|\lambda|^{2(n+1)}\bfth\wedge (d \bfth)^n$.
It follows that $\varphi\bfth\wedge (d \bfth)^n$ is invariant under $\delta^*_\lambda$ and
the pull-back $\zeta^*(\varphi\bfth\wedge (d \bfth)^n)$ is a volume form on $M$ which is independent of  $\zeta$.

\subsection{Flat metric on the canonical bundle and pseudo-Einstein condition}
For  a hermitian CR  line bundle $L$ over $M$ (in the sense that the transition functions are chosen to be CR holomorphic), we may lift $\conj\pa_b$ to $$\conj\pa_b\colon C^\infty(M,L)\to C^\infty(M,(T^{0,1})^*\otimes L).$$
We consider a connection $D\colon C^\infty(M,L)\to C^\infty(M,\bC T^*\otimes L)$
such that the restriction
$$D^{0,1}\colon C^\infty(M,L)\to C^\infty(M,(T^{0,1})^*\otimes L)$$
agrees with $\conj\pa_b$.
By analogy with the construction of the Chern connection of holomorphic hermitian vector bundles over complex manifolds, we can uniquely define a metric connection $D$ for the direction
$T^{1,0}\oplus T^{0,1}$.  However, to fix the derivative in the transversal direction, we need to impose an additional normalization 
on the curvature 
$$
R^D(V,W)\zeta=D_V D_W\zeta-D_WD_V\zeta-D_{[V,W]}\zeta.
$$
The {\em Tanaka connection} of a CR line bundle with a hermitian metric $(L,h)$ is a metric connection uniquely determined by the normalizations 
$D^{0,1}=\conj\pa_b$ and
$$\tr R^D:=\boldsymbol{l}^{\alpha\conj\beta}R^D(Z_\alpha,Z_{\conj \beta})=0,$$
where $Z_\alpha$ is a frame of $T^{1,0}$; see  \cite{T}.
We say that the metric $h$ is {\em flat} if the Tanaka connection is flat. 
See \cite{DT} for more comprehensive study of the connections on CR bundles.

Now we specialize $L$ to the canonical bundle $K_M$.
We can see that the Tanaka connection is consistent with the Chern connection on 
the canonical bundle of $X$ as follows.
Let $\Omega\subset X$ be a strictly pseudoconvex domain and $M=\pa\Omega$.
Let $\wt h$ be a hermitian metric on $K_X$, which can be identified with a homogenous function
$\wt h\in\wt\calE(n+2)$. Hence its restriction to $\calN$ gives a density $h\in\calE(n+2)$.  We fix an ambient metric $\wt g[\bfr]$ of $M$ and denote the K\"ahler Laplacian by $\wt\Delta=\wt g^{A\conj B}\wt\nabla_{A\conj B}$. 

\begin{lem}\label{curvature-lem}
Let $D$ be the connection of $K_M$ given by the restriction of
the Chern connection $\wt D$ of $(K_X,\wt h)$. Then  $D$ is the Tanaka connection
of $(K_M,h)$
 if and only if $\wt\Delta \log \wt h=O(\bfr)$.  
 \end{lem}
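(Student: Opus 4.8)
The plan is to reduce the statement to the single normalization $\tr R^D=0$ that singles out the Tanaka connection. Since $\wt D$ is the Chern connection of $(K_X,\wt h)$, it is a metric connection with $\wt D^{0,1}=\conj\pa$; restricting to $M$ and taking the $T^{0,1}$-directions shows that $D=\wt D|_{K_M}$ is a metric connection for $h=\wt h|_\calN$ satisfying $D^{0,1}=\conj\pa_b$. Thus, among the connections characterizing the Tanaka connection, the only condition left to verify is $\tr R^D=\boldsymbol{l}^{\alpha\conj\beta}R^D(Z_\alpha,Z_{\conj\beta})=0$, where the $Z_\alpha$ are the frame of $\ker\pa\rho$, restricting to $T^{1,0}M$ along $M$. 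Because the curvature of the pulled-back connection is the pullback of the curvature $2$-form, one has $R^D(Z_\alpha,Z_{\conj\beta})=R^{\wt D}(Z_\alpha,Z_{\conj\beta})$, and for the Chern connection of a line bundle $R^{\wt D}=-\pa\conj\pa\log\wt h$; hence $\tr R^D=-\boldsymbol{l}^{\alpha\conj\beta}(\log\wt h)_{\alpha\conj\beta}$, the Levi-trace of the Chern curvature.

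Next I would compute $\wt\Delta\log\wt h$ near $\calN$ in the frame $Z_A=(Z_0,Z_\alpha,Z_{n+1})$ using the inverse ambient metric \eqref{wtgatN}. The key simplification is that $\pa\conj\pa\log\wt h$ is pulled back from $X$: since $\wt h\in\wt\calE(n+2)$ gives $Z_0\log\wt h=n+2$, we may write $\log\wt h=(n+2)\log\abs{z^0}^2+\psi$ with $\psi$ a function of $z$ alone, and $\log\abs{z^0}^2$ is pluriharmonic, so the Hessian $(\log\wt h)_{A\conj B}$ vanishes whenever $A=0$ or $B=0$. Combining this with the block form of $\wt g^{A\conj B}$ in \eqref{wtgatN}—in particular $\wt g^{(n+1)\,\overline{n+1}}=O(\rho)$ and the vanishing of the off-diagonal blocks to leading order—only the Levi block survives:
\begin{equation*}
\wt\Delta\log\wt h
=\wt g^{\alpha\conj\beta}(\log\wt h)_{\alpha\conj\beta}+O(\bfr)
=-\abs{z^0}^{-2}\ell^{\alpha\conj\beta}(\log\wt h)_{\alpha\conj\beta}+O(\bfr).
\end{equation*}
Identifying $\ell^{\alpha\conj\beta}$ with $\boldsymbol{l}^{\alpha\conj\beta}$ through the density weight $\abs{z^0}^{-2}$, the leading coefficient is exactly $\tr R^D$, so $\wt\Delta\log\wt h=\abs{z^0}^{-2}\tr R^D+O(\bfr)$ along $\calN$. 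Therefore $\wt\Delta\log\wt h=O(\bfr)$ if and only if $\tr R^D=0$, which by the first step is equivalent to $D$ being the Tanaka connection.

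The main obstacle is the second step: verifying rigorously that no component other than the Levi block contributes to $\wt\Delta\log\wt h$ at leading order. This requires care with (i) the passage from ordinary to K\"ahler-covariant Hessians in the non-coordinate frame $Z_A$ (one must check that the frame connection terms do not spoil the trace, which is cleanest in the homogeneous coordinates $X^A$, where $\wt\Delta=\wt g^{A\conj B}\pa_A\pa_{\conj B}$), and (ii) the precise orders of the $O(\rho)$ remainders in \eqref{wtgatN}, to ensure that the cross terms $\wt g^{0\,\overline{n+1}}$ and $\wt g^{\alpha\,\overline{n+1}}$ pair only with Hessian entries that either vanish by the homogeneity relation $Z_0\log\wt h=n+2$ or are themselves $O(\bfr)$. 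Once these bookkeeping points are settled, the equivalence follows immediately.
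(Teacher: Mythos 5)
Your proof is correct and follows essentially the same route as the paper: reduce to the Tanaka normalization $\tr R^D=0$, use the local decomposition $\log\wt h=(n+2)\log|z^0|^2+\psi$ (with $\psi$ basic) to kill every Hessian entry involving the index $0$, and read off $\tr R^D=(\wt\Delta\log\wt h)|_\calN$ from the block form \eqref{wtgatN}. The only blemish is a harmless bookkeeping slip: since you defined $\tr R^D$ with the density $\boldsymbol{l}^{\alpha\conj\beta}$, which already absorbs the factor $|z^0|^{-2}$, the final identity should read $\wt\Delta\log\wt h=\tr R^D+O(\bfr)$ rather than $|z^0|^{-2}\tr R^D+O(\bfr)$; this does not affect the stated equivalence.
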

\begin{proof}
It suffices to verify the  condition $\tr R^D=0$. 
The curvature form of $\wt D$ is
$
dd^c\log \wt h
$, and that of $D$ is given by the restriction.
Since $\wt h$ is locally expressed as $|z_0|^2h$, where $h$ is a basic function,
it follows that $Z_0\CR\pa\conj\pa\log\wt h=0$.
Hence by the  matrix expression of $\wt g^{A\conj B}$ given in \eqref{wtgatN}, we have
$$
\tr R^D=(\wt\Delta\log\wt h)|_\calN.
$$
Hence $\tr R^D=0$  is equivalent to $\wt\Delta \log \wt h=O(\bfr)$.
\end{proof}

We next compare the Tanaka connection with the connection induced from the Tanaka--Webster connection. For a contact form $\theta$, take a section $\zeta$ of $K_M$ which normalizes $\theta$.
Then there is a unique metric $h_\theta$ of $K_M$ such that $|\zeta|_{h_\theta}=1$.
This gives a one-to-one correspondence between the contact forms on $M$ and metrics on $K_M$.
Let $\nabla$ be the Tanaka--Webster connection on $TM$ for $\theta$. Then there is an induced metric connection on $(K_M,h_\theta)$, which is also denoted by $\nabla$.
The relation between $\nabla$ and the Tanaka connection $D$ is given as follows.

\begin{lem}\label{curvature-lem2}
For any section $\zeta$ of $K_M$, one has
\begin{equation}\label{connection-relation}
D\zeta=\nabla\zeta-\frac{i} {n}\Scal\theta\otimes\zeta,
\end{equation}
and the curvature of $D$  is given by
\begin{equation}\label{bPform}
R^D=-(\tf\Ric)_{\alpha\conj\beta}\theta^\alpha\wedge\theta^{\conj\beta}
-S_\alpha\theta^\alpha\wedge\bfth+
S_{\conj\beta}\theta^{\conj\beta}\wedge\bfth,
\end{equation}
where $(\tf\Ric)_{\alpha\conj\beta}=\Ric_{\alpha\conj\beta}-\frac1{n}\Scal\,\boldsymbol{l}_{\alpha\conj\beta}$ and
$
S_\alpha=\conj{S_{\conj\alpha}}=\frac{i}{n}\Scal_\alpha+A_{\alpha\gamma,}{}^\gamma.
$
 \end{lem}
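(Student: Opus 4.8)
The plan is to produce the asserted connection $D$ by a short construction and then invoke the uniqueness of the Tanaka connection, so that the curvature formula drops out of the same computation. Concretely, I would define a candidate connection on $K_M$ by
$$
D'\zeta:=\nabla\zeta-\frac{i}{n}\Scal\,\theta\otimes\zeta .
$$
Since $\nabla$ is the metric connection on $(K_M,h_\theta)$ and we are modifying it by the purely imaginary $1$-form $-\tfrac{i}{n}\Scal\,\theta$ (note $\Scal$ and $\theta$ are real), $D'$ is again metric with respect to $h_\theta$. By the characterization of the Tanaka connection recalled in the text, it then suffices to verify the two normalizations $D'^{\,0,1}=\conj\pa_b$ and $\tr R^{D'}=0$; uniqueness will give $D'=D$, which is precisely \eqref{connection-relation}.

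For the first normalization, $\theta$ has no $(0,1)$-component, so $D'^{\,0,1}=\nabla^{0,1}$ and the claim reduces to the compatibility $\nabla^{0,1}=\conj\pa_b$ on $K_M$, i.e.\ that the Tanaka--Webster connection on the canonical bundle agrees with $\conj\pa_b$ in the antiholomorphic directions. This is the technical heart of the argument and the step I expect to be the main obstacle. I would prove it in the frame $\zeta_0=\theta\wedge\theta^1\wedge\cdots\wedge\theta^n$ of $K_M$, for which a direct computation from $\nabla\theta^\alpha=-\omega_\beta{}^\alpha\otimes\theta^\beta$ and $\nabla\theta=0$ gives $\nabla\zeta_0=-\omega_\gamma{}^\gamma\otimes\zeta_0$. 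Comparing with a local CR-holomorphic frame $\zeta_{\mathrm{hol}}$ obtained by restricting a holomorphic $(n+1,0)$-form on $X$ to $M$ (such frames exist since $M$ is embedded), one writes $\zeta_0=g\,\zeta_{\mathrm{hol}}$, so that $\conj\pa_b\zeta_0=(\conj\pa_b\log g)\otimes\zeta_0$; the desired identity is then $(\omega_\gamma{}^\gamma)^{0,1}=-\conj\pa_b\log g$, which follows from the explicit form of the trace of the Tanaka--Webster connection forms (alternatively one may cite the corresponding compatibility in Lee \cite{Lee}). Granting this, $D'^{\,0,1}=\conj\pa_b$.

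For the second normalization and the curvature formula I would compute $R^{D'}$ directly. Because $K_M$ is a line bundle, the curvature is the exterior derivative of the connection $1$-form, so in the frame $\zeta_0$ (identifying $\theta$ with $\bfth$ at the fixed scale)
$$
R^{D'}=-d\omega_\gamma{}^\gamma-\frac{i}{n}\,d\Scal\wedge\bfth-\frac{i}{n}\Scal\,d\theta .
$$
Substituting \eqref{tromega} for $d\omega_\gamma{}^\gamma$, the structure equation $d\theta=i\,\ell_{\alpha\conj\beta}\theta^\alpha\wedge\theta^{\conj\beta}$, and $d\Scal=\Scal_\alpha\theta^\alpha+\Scal_{\conj\beta}\theta^{\conj\beta}+\Scal_0\bfth$, I collect coefficients of the independent $2$-forms. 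The $\theta^\alpha\wedge\theta^{\conj\beta}$ coefficient becomes $-\Ric_{\alpha\conj\beta}+\tfrac1n\Scal\,\boldsymbol{l}_{\alpha\conj\beta}=-(\tf\Ric)_{\alpha\conj\beta}$, whose trace against $\boldsymbol{l}^{\alpha\conj\beta}$ equals $-\Scal+\Scal=0$; this verifies $\tr R^{D'}=0$ and hence $D'=D$ by uniqueness. The $\theta^\alpha\wedge\bfth$ and $\theta^{\conj\beta}\wedge\bfth$ coefficients assemble, using the symmetry $A_{\gamma\alpha,}{}^\gamma=A_{\alpha\gamma,}{}^\gamma$, into $-\big(\tfrac{i}{n}\Scal_\alpha+A_{\alpha\gamma,}{}^\gamma\big)=-S_\alpha$ and its conjugate $S_{\conj\beta}$, and no $(2,0)$ or $(0,2)$ terms arise. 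This reproduces exactly the stated expression \eqref{bPform} for $R^D$, completing the proof.
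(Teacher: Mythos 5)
Your proposal is correct and takes essentially the same approach as the paper: both identify $D$ as $\nabla$ shifted by the purely imaginary form $-\frac{i}{n}\Scal\,\theta$ using the uniqueness/normalization characterizing the Tanaka connection, and both obtain \eqref{bPform} by computing $d\eta$ for $\eta=-\omega_\alpha{}^\alpha-\frac{i}{n}\Scal\,\theta$ via \eqref{tromega}. The only cosmetic difference is that the paper fixes the shift by citing Lee's formula $\tr R^\nabla=-\Scal$, whereas you posit the shift and verify the trace-free condition from the same curvature computation (and you make explicit the compatibility $\nabla^{0,1}=\conj\pa_b$, which the paper leaves implicit).
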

\begin{proof}
 The curvature of $\nabla$ on $K_M$ is given by (see \cite[\S4]{Lee})
$$
R^\nabla(Z_\alpha,Z_{\conj \beta})=-\Ric_{\alpha\conj\beta}.
$$
It follows that $\tr R^\nabla=-\Scal$.  Thus the difference between $D$ and $\nabla$ is given by 
\eqref{connection-relation} and  the connection form of $D$ is
$$
\eta=-\omega_\alpha{}^\alpha-\frac{i} {n}\Scal \theta.
$$
Using \eqref{tromega}, we obtain \eqref{bPform}.
\end{proof}

Now we formulate Lee's pseudo-Einstein condition (originally given in the case $n\ge2$) in terms of the metric on the canonical bundle.

\begin{prop}\label{psudo-Einstein-prop}
For a contact form $\theta$ on $M$, the following are equivalent:
\begin{enumerate}
\item The metric
$h_\theta$ of $K_M$ is flat.
\item
The metric
$h_\theta$ extends to a metric of $K_X|_{\ol{\Omega}}$ that is flat
near the boundary.
\item
Locally, $\theta$ is normalized with respect to a closed section of $K_M$.

\item The Tanaka--Webster connection of $\theta$ satisfies
$$
\begin{cases}\tf \Ric=0 &\text{if } n>1,\\
\Scal_1-iA_{11,}{}^{1}=0& \text{if } n=1.
\end{cases}
$$
\end{enumerate}
In (2), the flat extension of $h_\theta$ is unique near the boundary.
\end{prop}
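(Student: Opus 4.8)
All four conditions can be read off from the curvature $R^D$ of the Tanaka connection computed in Lemma~\ref{curvature-lem2}, so I would organize the proof as the three equivalences (1)$\Leftrightarrow$(4), (1)$\Leftrightarrow$(3), (1)$\Leftrightarrow$(2), followed by the uniqueness clause. Condition (1) is precisely $R^D=0$, and by \eqref{bPform} this means $\tf\Ric=0$ together with $S_\alpha=0$. For $n=1$ the tensor $\tf\Ric$ vanishes identically, so $R^D=0$ reduces to $S_1=0$; since $S_1=i\Scal_1+A_{11,}{}^1$, multiplying by $-i$ gives exactly $\Scal_1-iA_{11,}{}^1=0$, which is (4) for $n=1$. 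For $n>1$ the only nontrivial point is that $\tf\Ric=0$ already forces $S_\alpha=0$. Here I would use that $R^D$, being the curvature of a connection on the line bundle $K_M$, is closed: $dR^D=0$. Substituting $\tf\Ric=0$ into \eqref{bPform} leaves $R^D=-S_\alpha\theta^\alpha\wedge\theta+S_{\conj\beta}\theta^{\conj\beta}\wedge\theta$, and expanding $dR^D$ by the structure equations $d\theta=i\ell_{\alpha\conj\beta}\theta^\alpha\wedge\theta^{\conj\beta}$ and $d\theta^\alpha=\theta^\beta\wedge\omega_\beta{}^\alpha+A^\alpha{}_{\conj\gamma}\theta\wedge\theta^{\conj\gamma}$, the part carrying no factor $\theta$ is $i\ell_{\rho\conj\sigma}(S_\alpha\theta^\alpha-S_{\conj\alpha}\theta^{\conj\alpha})\wedge\theta^\rho\wedge\theta^{\conj\sigma}$. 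Vanishing of its $(2,1)$-part gives $\ell_{\rho\conj\sigma}S_\alpha=\ell_{\alpha\conj\sigma}S_\rho$, and tracing with $\ell^{\rho\conj\sigma}$ yields $(n-1)S_\alpha=0$; hence $S_\alpha=0$ for $n>1$. The reverse implication (1)$\Rightarrow$(4) is immediate.

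For (1)$\Leftrightarrow$(3) I would first compute the exterior derivative of the local frame $\zeta_0=\theta\wedge\theta^1\wedge\cdots\wedge\theta^n$ of $K_M$. Using the structure equations, the $d\theta$ term and the torsion term of $d\theta^\alpha$ both drop out on wedging (they repeat $\theta^\alpha$ or $\theta$), leaving $d\zeta_0=-\omega_\gamma{}^\gamma\wedge\zeta_0$. Since the $D$-connection form in this frame is $\eta=-\omega_\gamma{}^\gamma-\tfrac{i}{n}\Scal\theta$ (proof of Lemma~\ref{curvature-lem2}), this reads $d\zeta_0=(\eta+\tfrac{i}{n}\Scal\theta)\wedge\zeta_0$, so for a general section $\zeta=\psi\zeta_0$ with $D$-connection form $\eta_\zeta=\eta+d\log\psi$ one gets $d\zeta=(\eta_\zeta+\tfrac{i}{n}\Scal\theta)\wedge\zeta$. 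If (1) holds then $\eta$ is closed, so locally $\eta_\zeta=0$ for a suitable choice of $\psi$; the resulting $\zeta$ is $D$-parallel, hence of constant $h_\theta$-norm, which I normalize to $1$, and then $d\zeta=\tfrac{i}{n}\Scal\theta\wedge\zeta=0$ because $\theta\wedge\zeta=0$. Thus $\theta$ is normalized with respect to the closed section $\zeta$, giving (3). Conversely, given (3) with $|\zeta|_{h_\theta}=1$: closedness forces the $(0,1)$-part of $\eta_\zeta+\tfrac{i}{n}\Scal\theta$ to vanish (a $1$-form wedges to zero with $\zeta$ precisely when it has no $\theta^{\conj\beta}$-component), i.e. $\eta_\zeta^{0,1}=0$, while unit norm and $D$ metric make $\eta_\zeta$ purely imaginary; the two together give $\eta_\zeta=p\,\theta$ with $p\in i\bR$. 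Then $R^D=d(p\theta)$ has $\theta^\alpha\wedge\theta^{\conj\beta}$-component $pi\ell_{\alpha\conj\beta}$, which by \eqref{bPform} equals the trace-free tensor $-(\tf\Ric)_{\alpha\conj\beta}$; tracing forces $pin=0$, so $p=0$ and $R^D=0$. This argument is uniform in $n\ge1$.

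For (1)$\Leftrightarrow$(2): if (2) holds, a flat extension $\wt h$ has $dd^c\log\wt h=0$, hence $\wt\Delta\log\wt h=0$, so by Lemma~\ref{curvature-lem} the Chern connection of $(K_X,\wt h)$ restricts to the Tanaka connection and $R^D$ is the restriction of $dd^c\log\wt h=0$; thus (1). For (1)$\Rightarrow$(2) I would invoke (3): locally there is a closed---hence $\conj\pa_b$-closed---section $\zeta$ with $|\zeta|_{h_\theta}=1$, which being CR holomorphic extends to a holomorphic section $\sigma$ of $K_X$ on the pseudoconvex side, and the metric fixed by $|\sigma|_{\wt h}\equiv1$ is flat and restricts to $h_\theta$. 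These local flat extensions are glued into a global one using the uniqueness clause, which I would prove directly: two flat extensions differ by $u=\log(\wt h_1/\wt h_2)$, pluriharmonic and vanishing on $M$; writing $u=\re F$ with $F$ holomorphic, $\re F|_M=0$ makes $-iF|_M$ a real-valued CR function, hence locally constant, so $F$ is an imaginary constant on the real hypersurface $M$ and therefore constant near $M$, giving $u\equiv0$. The main obstacle is the bookkeeping in the two exterior-derivative computations, $dR^D=0$ and $d\zeta_0$, which must be arranged so the algebra isolates $S_\alpha$ and the connection form cleanly; the holomorphic extension and the patching are then routine.
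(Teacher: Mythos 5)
Your proposal is correct, and on the equivalence (1)$\Leftrightarrow$(4) it is the paper's argument: both rest on the closedness of the line-bundle curvature $R^D$ (the Bianchi identity $dd\eta=0$) applied to \eqref{bPform}. The paper records the full consequence $(\tf\Ric)_{\alpha\conj\beta,}{}^{\conj\beta}=-i(n-1)S_\alpha$, of which your computation of the $\theta$-free, $(2,1)$-part of $dR^D$ extracts exactly the special case needed when $\tf\Ric=0$. Where you genuinely diverge is in the remaining equivalences. The paper disposes of (3) by citation: (3)$\Leftrightarrow$(4) is quoted from Lee (Thm.~4.2, for $n\ge2$) and from the first author's earlier paper on $Q$-prime curvature (Prop.~5.1, for $n=1$), with the two dimensions treated by different references; you instead prove (1)$\Leftrightarrow$(3) directly and uniformly in $n$, via $d\zeta_0=-\omega_\gamma{}^\gamma\wedge\zeta_0$, the fact that a $D$-parallel unit section is automatically closed and normalizing, and conversely that a closed normalizing section forces $\eta_\zeta=p\,\theta$ with $p$ killed by tracing against the trace-free tensor $\tf\Ric$. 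For (1)$\Rightarrow$(2) the paper extends $\log h_\theta$ by imposing $\wt\Delta\log\wt h=O(\bfr)$, identifies condition (1) with CR pluriharmonicity of $\log h_\theta$ (again by citation), and then invokes pluriharmonic extension to the pseudoconvex side; you route through (3), extending the closed (hence CR holomorphic) frame to a holomorphic section of $K_X$ and taking the metric in which it has unit norm. Both routes ultimately rely on the same Lewy-type extension principle, which the paper states as a standing fact, so neither is more elementary at that point. Finally, you supply a proof of the uniqueness clause, which the paper's proof leaves implicit; your argument is sound, but note that its last step --- a holomorphic function on the pseudoconvex side whose boundary value on $M$ is constant must be constant near $M$ --- is itself a boundary uniqueness theorem (provable, e.g., by restricting to small analytic discs attached to $M$ and applying the maximum principle, or by uniqueness of holomorphic extension of CR functions), and deserves the one line of justification you omit. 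In sum: your version is longer but self-contained and dimension-uniform, and it makes the uniqueness assertion explicit; the paper's version buys brevity by leaning on the literature.
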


\begin{defn}
A contact form $\theta$ is {\em pseudo-Einstein} if one of the equivalent conditions in the proposition above holds.  We denote by $\calPE$ the set of all pseudo-Einstein contact forms on $M$.
\end{defn}

\begin{proof}[Proof of Proposition \ref{psudo-Einstein-prop}]
The Bianchi identity $dd\eta=0$ for the curvature of $D$ gives
$$
(\tf\Ric)_{\alpha\conj\beta,}{}^{\conj\beta}=-i(n-1)S_\alpha.
$$
Hence, in view of \eqref{bPform},  the condition (1) is equivalent to (4) in  all dimensions.

By Lemma \ref{curvature-lem}, if we extend $h_\theta$ to $\wt h$ by imposing
$\wt\Delta\log \wt h=O(\bfr)$, then the curvature of $h_\theta$ is given by the restriction of $dd^c\log \wt h$ to $M$.
Hence (1) holds if and only if $\log h_\theta$ is CR pluriharmonic; see \cite[Lem.~3.2]{H2}.
 This is equivalent to (2) asserting that the extension $\log\wt h$ is pluriharmonic on the pseudoconvex side. The equivalence of (3) and (4) has been proved in \cite[Thm.~4.2]{Lee} in the case $n\ge2$ and \cite[Prop. 5.1]{H2} in the case $n=1$.
 \end{proof}

By the condition (1) or (3), it is clear that if $\theta\in\calPE$, then $e^u\theta\in\calPE$ if and only if $u$ is CR pluriharmonic.

\section{Total $Q$-prime curvature and renormalized volume}
\subsection{Definition of the total $Q$-prime curvature}
For a pseudo-Einstein contact form $\theta\in\calPE$, we have a flat metric $h_\theta$ on $K_M$ and its unique flat extension, which we also denote by $h_\theta$. 

\begin{defn}
The {\em $Q$-prime curvature} of $\theta\in\calPE$ is defined by
\begin{equation}\label{defQp}
Q_\theta'=(n+2)^{-2}\wt\Delta^{n+1}(\log h_\theta)^2|_{\calN}\in\calE(-n-1).
\end{equation}
\end{defn}

The constant $(n+2)^{-2}$ is chosen so that the definition agrees with the one in \cite{H2},
where the bundle $K_X^{1/(n+2)}$ is used in place of $K_X$.
The {\em total $Q$-prime curvature} of $(M,\theta)$ is defined to be the integral
$$
\overline Q'=\int_M Q_\theta'.
$$
Using the flat metric $h_\theta$, we set
$$
r=\bfr/ h_\theta^{1/(n+2)},
$$
which we call the {\em Fefferman defining function} associated with $\theta$, and
 define a K\"ahler metric $g$ near the boundary by the K\"ahler form
$\omega=dd^c\log r$. 
Then  we can write $\overline Q'$ as the logarithmic term in the expansion \eqref{eq:Qprime_expansion-intro}.


If $\Omega\subset\mathbb{C}^{n+1}$, we may replace $g$ in the formula \eqref{eq:Qprime_expansion-intro}
by the complete K\"ahler--Einstein metric $g_+$ given by Cheng--Yau and $r$ by the exact solution $u$
to the Monge--Amp\`ere equation.
More generally, if $g_+$ is a globally defined complete K\"ahler--Einstein metric on $\Omega\subset X$
that is given by $dd^c\log u$ with a function $u$ of the form \eqref{uexpansion} near $\pa\Omega$,
then the same replacement is possible, as we shall confirm below.

\begin{lem}\label{deffunlemma}
For $r$ and $u$ as above, let $\omega_+=\omega+dd^c\varphi$,
where $\varphi$ is a function on $\Omega$ agreeing with $\log u-\log r$ near $\pa\Omega$. Then
$$
\int_{u>\epsilon}\pa\log u\wedge\conj\pa\log u\wedge\omega_+^n
=\int_{r>\epsilon}\pa\log r\wedge\conj\pa\log r\wedge\omega^n+O(1)
$$
and
$$
\int_{u>\epsilon}\omega_+^{n+1}=
\int_{r>\epsilon}\omega^{n+1}+O(\epsilon\log\epsilon).
$$
In each equation, one can also replace $u>\epsilon$ by $r>\epsilon$, and vice versa.
\end{lem}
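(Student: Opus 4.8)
The plan is to reduce both identities to estimates near $\pa\Omega$, exploiting that the two defining functions differ only at high order. Writing $\psi=\log u-\log r$, the expansion \eqref{uexpansion} gives $u/r=1+\eta_0 r^{n+2}+\eta_1 r^{n+2}\log r$, so near the boundary
$$
\psi=\log(1+\eta_0 r^{n+2}+\eta_1 r^{n+2}\log r)=O(r^{n+2}\log r),
$$
and differentiating, $\pa\psi=O(r^{n+1}\log r)$ and $dd^c\psi=O(r^{n}\log r)$. I measure sizes against the singular K\"ahler metric $g$: from $\omega=dd^c\log r=r^{-1}dd^c r-r^{-2}\,dr\wedge d^cr$ one reads off $\omega^{n+1}\sim r^{-(n+2)}\,dr\wedge(\text{contact volume})$, hence $dv_g\sim r^{-(n+2)}\,dr\wedge(\text{contact volume})$, while the pullback of $\omega^n$ to a level set $\{r=\mathrm{const}\}$ has size $O(r^{-n})$. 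I would treat both integrals by the same two-step scheme: first replace the integrand, passing from $(u,\omega_+)$ to $(r,\omega)$, then correct for the mismatch of the domains $\{u>\epsilon\}$ and $\{r>\epsilon\}$. For the latter, on $\{r=\epsilon\}$ one has $u=\epsilon e^{\psi}=\epsilon(1+O(\epsilon^{n+2}\log\epsilon))$, so the symmetric difference $\{u>\epsilon\}\,\triangle\,\{r>\epsilon\}$ is a shell of $r$-thickness $O(\epsilon^{n+3}\log\epsilon)$ sitting at $r\approx\epsilon$; integrating $dv_g\sim r^{-(n+2)}$ over it contributes $O(\epsilon^{-(n+2)}\cdot\epsilon^{n+3}\log\epsilon)=O(\epsilon\log\epsilon)$, and likewise for $\pa\log r\wedge\conj\pa\log r\wedge\omega^n$.

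For the volume identity I would use Stokes' theorem, which is what yields the sharp error $O(\epsilon\log\epsilon)$ rather than merely $O(1)$. Since $\omega_+-\omega=dd^c\varphi$ holds on all of $\Omega$ and both forms are closed,
$$
\omega_+^{n+1}-\omega^{n+1}=dd^c\varphi\wedge\sum_{k=0}^n\omega_+^{k}\wedge\omega^{n-k}=d\Big(d^c\varphi\wedge\sum_{k=0}^n\omega_+^{k}\wedge\omega^{n-k}\Big),
$$
so that over the smoothly bounded region $\{u>\epsilon\}$ all interior contributions cancel and, up to orientation sign,
$$
\int_{u>\epsilon}\big(\omega_+^{n+1}-\omega^{n+1}\big)=\int_{u=\epsilon}d^c\varphi\wedge\sum_{k=0}^n\omega_+^{k}\wedge\omega^{n-k}.
$$
Near $\{u=\epsilon\}$ we have $\varphi=\psi$, the tangential part of $d^c\psi$ is $O(r^{n+1}\log r)$, and each $\omega_+^{k}\wedge\omega^{n-k}$ restricts to size $O(r^{-n})$; since $r\approx\epsilon$ there, the boundary integral is $O(\epsilon^{n+1}\log\epsilon\cdot\epsilon^{-n})=O(\epsilon\log\epsilon)$. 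Adding the domain-mismatch term, also $O(\epsilon\log\epsilon)$, gives the second formula.

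For the first identity only the bound $O(1)$ is required, so I would argue by direct expansion rather than Stokes (indeed $\pa\log u\wedge\conj\pa\log u\wedge\omega_+^n$ is not closed). Writing $\Phi_+=\pa\log u\wedge\conj\pa\log u\wedge\omega_+^n$ and $\Phi=\pa\log r\wedge\conj\pa\log r\wedge\omega^n$ and substituting $\pa\log u=\pa\log r+\pa\psi$, $\conj\pa\log u=\conj\pa\log r+\conj\pa\psi$, $\omega_+=\omega+dd^c\psi$, every term of $\Phi_+-\Phi$ carries at least one factor among $\pa\psi$, $\conj\pa\psi$, $dd^c\psi$; by the orders above each such substitution is smaller than the leading integrand $\sim r^{-(n+2)}\,dr\wedge(\text{contact volume})$ by a factor $O(r^{n+2}\log r)$. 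Thus every correction term is bounded near the boundary by $O(\log r)\,dr\wedge(\text{contact volume})$, which is absolutely integrable as $r\to0$, so $\int_{u>\epsilon}(\Phi_+-\Phi)=O(1)$; the fixed interior region contributes a constant, and the domain mismatch is $O(\epsilon\log\epsilon)$. Combining gives the first formula, and the interchange of $\{u>\epsilon\}$ and $\{r>\epsilon\}$ in either formula is exactly the domain-mismatch estimate already established. The main obstacle is organizing these order-of-vanishing estimates cleanly, and in particular checking that the limited boundary regularity recorded in \eqref{uexpansion} ($\eta_1\in C^{n+1}(\overline\Omega)$) still permits forming $dd^c\log u$ and its powers and applying Stokes on $\{u>\epsilon\}$; it is precisely the Stokes reduction to the single level set $\{u=\epsilon\}$ that upgrades the volume estimate from $O(1)$ to the sharp $O(\epsilon\log\epsilon)$.
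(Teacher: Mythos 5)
Your proposal is correct and follows essentially the same route as the paper's proof: the same shell estimate for the domain mismatch (level sets of $u$ and $r$ differing by $O(\epsilon^{n+3}\log\epsilon)$ against an integrand of size $O(r^{-n-2})$), the same direct order-counting expansion giving an integrable $O(\log r)$ error for the first identity, and the same Stokes reduction of $\omega_+^{n+1}-\omega^{n+1}=d\bigl(d^c\varphi\wedge\Phi\bigr)$ to a single level set for the sharp $O(\epsilon\log\epsilon)$ bound in the second. The only differences are cosmetic: you write $\Phi$ as a telescoping sum rather than the paper's binomial expansion, and you apply Stokes on $\{u>\epsilon\}$ where the paper uses $\{r>\epsilon\}$.
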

\begin{proof}


We first examine the choice of defining function.  Take a trivialization of
$\Omega$ near the boundary $M\times(0,1)$ such that the second component agrees with $r$. 
In each equality, the integrand is of the form $f(x,r)d\sigma\wedge dr$
with a function $f(x,r)=O(r^{-n-2})$, where $x$ is a coordinate on $M$ and $d\sigma$ is a fixed volume form
on $M$. Thus
$$
\int_{u>\epsilon}f(x,r)d\sigma\wedge dr-
\int_{r>\epsilon}f(x,r)d\sigma\wedge dr
=\int_{M}\int_{r_\epsilon(x)}^\epsilon f(x,r)drd\sigma,
$$
where $r_\epsilon(x)$ is defined by $u(x,r_\epsilon(x))=\epsilon$.
Since $r_\epsilon(x)=\epsilon(1+O(\epsilon^{n+2}\log\epsilon))$,
we have
\begin{align*}
\int_{r_\epsilon(x)}^\epsilon|f(x,r)|dr&=
O\left(\big[r^{-n-1}\big]_{\epsilon(1+O(\epsilon^{n+2}\log\epsilon))}^\epsilon\right)\\
&=\epsilon^{-n-1}O(\epsilon^{n+2}\log\epsilon)
=O(\epsilon\log\epsilon).
\end{align*}
This estimate is uniform over $M$.

We next prove the first equation with $u>\epsilon$ replaced by $r>\epsilon$ on the left-hand side.
Differentiating \eqref{uexpansion}, we have
\begin{align*}
\pa u&=\pa r+O(r^{n+2}\log r),
\\
\pa\conj\pa u&=\pa\conj\pa r+O(r^{n+2}\log r)\mod \pa r
\end{align*}
and thus
$$
\pa u\wedge\conj\pa u\wedge\pa\conj\pa u=
\pa r\wedge\conj\pa r\wedge\pa\conj\pa r+O(r^{n+2}\log r).
$$
It follows that
$$
\pa\log u\wedge\conj\pa\log u\wedge\omega_+^n=
\pa\log r\wedge\conj\pa\log r\wedge\omega^n+O(\log r).
$$
The integration of the difference $O(\log r)$ over $r>\epsilon$ gives $O(1)$. 

To prove the second formula, we write
$$
\omega_+^{n+1}=\omega^{n+1}+dd^c \varphi\wedge\Phi,
$$
where
$\Phi=\sum_{j=0}^{n}\begin{binom}{n+1}{j+1}\end{binom}(dd^c\varphi)^j \wedge\omega^{n-j}$.
Since $\Phi$ is closed, we have
\begin{equation}\label{difference}
\int_{r>\epsilon}\omega_+^{n+1}-\omega^{n+1}=\int_{r=\epsilon}
d^c\varphi\wedge\Phi.
\end{equation}
On the other hand,  $d^c\varphi=O(r^{n+1}\log r)$ and $
\Phi= O(r^{-n})$ mod $dr$
give
$
d^c\varphi\wedge\Phi=O(r\log r)$ mod $dr.
$
Hence the right-hand side of \eqref{difference} is $O(\epsilon\log\epsilon)$.
\end{proof}

\subsection{Proof of Theorem \ref{renormalized-volume-thm}}\label{proofthm1}

For $\theta\in\calPE$, let $r=\bfr/h_\theta^{1/(n+2)}$. Then we may write $\omega_+=\omega+dd^c\varphi$ as
in Lemma \ref{deffunlemma}.
To compute the renormalized volume $V$, $\omega^{n+1}$ can be used in place of $\omega_+^{n+1}$.
Since $\omega$ is given by $dd^c\log r$ near $\pa\Omega$,
\begin{equation}\label{kahler-decomp}
\omega=dd^c\log r+\omega_0,
\end{equation}
where $\omega_0$ has compact support.
The pair $(\omega_0,0)\in\wedge^2(\Omega)\oplus\wedge^1(\pa\Omega)$ defines a relative cohomology class $\wt c_1(K_\Omega)\in H^2(\Omega,\pa\Omega;\bR)$ which projects to $c_1(K_\Omega)\in H^2(\Omega;\bR)$.
Taking the power of \eqref{kahler-decomp}, we get
\begin{align*}
\omega^{n+1}
&=\left(dd^c\log r\right)^{n+1}+\omega_0^{n+1}+d\eta\\
&=
d\Bigl(r^{-1}\vartheta
\wedge \big(d(r^{-1}\vartheta)\big)^{n}\Bigr)+\omega_0^{n+1}+d\eta,
\end{align*}
where  $\vartheta =d^c r$ and $\eta$ is
 a compactly supported $(2n+1)$-form on $\Omega$.
Thus Stokes' theorem implies
\begin{equation}\label{stokes-eq}
\int_{\Omega^\epsilon}\omega^{n+1}=
\int_{\pa\Omega^\e}\frac{\vartheta}{r}
\wedge\Bigl(d\Bigl(\frac{\vartheta}{r}\Bigr)\Bigr)^{n} +
\int_\Omega \omega_0^{n+1}.
\end{equation}
Note that any different choice of $\theta\in\calPE$ gives the same integral of $\omega_0^{n+1}$.
In fact, using Proposition \ref{psudo-Einstein-prop} (2), one can show that
the change of $\omega_0^{n+1}$ is of the form $d\eta'$ with $\eta'$ compactly supported.

For small $\epsilon_0>0$, the set $\conj\Omega\setminus\Omega^{\epsilon_0}$ 
 can be identified with $M\times [0, \e_{0}]$ by the flow generated by $N$
 defined in \eqref{def-xi} with respect to $\rho=r$. We regard $\theta$ as a 1-form on 
this product by pulling it back via the natural projection onto $M$. 
Then we have
\begin{equation}\label{def-a}
\vartheta\wedge (d\vartheta )^n=s(x,r)\theta\wedge (d\theta)^n\mod dr
\end{equation}
for a function $s(x,r)$.
Hence
$$
\int_{\pa\Omega^\e}\frac{\vartheta}{r}
\wedge\Bigl(d\Bigl(\frac{\vartheta}{r}\Bigr)\Bigr)^{n}
=\epsilon^{-n-1}
\int_{M}s(x,\epsilon)\theta\wedge (d\theta)^n.
$$
Comparing  the constant terms in \eqref{stokes-eq}  gives
\begin{equation}
(n+1)!V=\int_M \frac{s^{(n+1)}}{(n+1)!}\theta
\wedge(d\theta)^n+\int_\Omega \omega_0^{n+1},
\end{equation}
where we have set  $s^{(j)}=N^j s|_M$.

On the other hand, near the boundary, we have
\begin{equation*}
	\begin{split}
		|d\log r|^2_gdv_g
		&=-2i\pa\log r\wedge\ol\pa\log r\wedge\frac{\omega^{n}}{n!}
		=\frac{2}{n!}\frac{dr\wedge\vartheta\wedge(d\vartheta)^n}{r^{n+2}}\\
		&=\frac{2}{n!}\frac{s(x,r)}{r^{n+2}}\,dr\wedge\theta\wedge(d\theta)^n.
	\end{split}
\end{equation*}
Thus comparing the coefficient of $\log\epsilon$ in \eqref{eq:Qprime_expansion-intro} 
gives
\begin{equation}\label{totalQ-sn}
\overline Q'=\frac{(-1)^{n+1}2\cdot n!}{n+1}\int_M s^{(n+1)}\theta\wedge(d\theta)^n.
\end{equation}
This completes the proof of the theorem.
\qed

\section{Hamiltonian flow on the ambient space}\label{Hamilton-sec}
\subsection{Hamiltonian flow}
Let $M_t$, $t\in (-1,1)$, be a smooth family of strictly pseudoconvex real hypersurfaces in a complex manifold $X$.  Setting $\calN_t=K_{M_t}^*$, we take
a smooth family of Fefferman defining functions $\bfr_t$ of $\calN_t$ and denote the family of ambient metrics by $\wt g_t=\wt g[\bfr_t]$. For $t=0$, we will omit the subscript $0$. We use `dot' for the derivative in $t$, e.g.~ $\dot\bfr_t=(d/dt)\bfr_t$.

For a smooth family of complex valued functions 
$F_t\in\wt\calE(1)$, 
we define a time-dependent real vector field on $\wt X$, the gradient of $F_t$, by
$$
Y_t=\re\big( (F_t)^A\pa_{A}\big),
$$
where the index is raised by using $\wt g_t$.
Let $\Phi_t$ be the isotopy generated by $Y_t$, i.e., $\Phi_t$
is a smooth family of diffeomorphisms in a neighborhood of $\calN$ such that
$$
\frac{d}{dt}\Phi_t(p)=Y_t(\Phi_t(p)).
$$
Since ${F}_t\in\wt\calE(1)$, the vector field $Y_t$ is invariant under dilations $\delta_\lambda$. Hence $\Phi_t$ is a bundle map of $\wt X$, i.e., $\delta_\lambda\circ\Phi_t=\Phi_t\circ\delta_\lambda$. We denote the induced flow on $X$ by $\underline{\Phi}_t$ and the generator of $\underline\Phi_t$ by $\underline Y_t$. Note that  $\underline Y_t$ is characterized by $\underline Y_t=\pi_* Y_t$, where $\pi\colon\wt X\to X$ is the projection.

\begin{lem}
	\label{hamiltonian-flow-lemma}
	Let $\boldsymbol{\vartheta}_t=d^c\bfr_t$. 
	If $\re F_t=-\dot\bfr_t,$
then
$$
\bfr_t\circ\Phi_t=\bfr,\qquad
\Phi^*_t(\boldsymbol{\vartheta}_t)=\boldsymbol{\vartheta}.
$$
In particular, $\underline\Phi_t\colon M\to M_t$ is a contact diffeomorphism.
\end{lem}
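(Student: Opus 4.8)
The plan is to prove both identities by showing that the function $\bfr_t\circ\Phi_t$ and the $1$-form $\Phi_t^*\boldsymbol{\vartheta}_t$ are independent of $t$, and then evaluating at $t=0$, where $\Phi_0=\mathrm{id}$. Before differentiating I would record two homogeneity facts that carry the whole argument. First, since $F_t\in\wt\calE(1)$ is homogeneous of bidegree $(1,1)$, the Euler relations $X^A\pa_A F_t=F_t$ and $X^{\conj A}\pa_{\conj A}F_t=F_t$ hold. Second, from the identity $\bfrho^A=X^A$ established for a Fefferman defining function (see the discussion around \eqref{rhoAB}), applied to $\bfr_t$ and together with its conjugate, one has $\wt g_t^{A\conj B}\pa_{\conj B}\bfr_t=X^A$ and $(\pa_A\bfr_t)\,\wt g_t^{A\conj B}=X^{\conj B}$.

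For the first identity, write $V_t=(F_t)^A\pa_A$, so that $Y_t=\re V_t=\tfrac12(V_t+\conj V_t)$. A direct contraction gives $i_{V_t}\,d\bfr_t=(\pa_A\bfr_t)(F_t)^A=(\pa_A\bfr_t)\,\wt g_t^{A\conj B}\,\pa_{\conj B}F_t=X^{\conj B}\pa_{\conj B}F_t=F_t$, and the conjugate computation gives $i_{\conj V_t}\,d\bfr_t=\conj F_t$; hence $Y_t\bfr_t=\re F_t$. Using the transport formula $\tfrac{d}{dt}(\Phi_t^*\bfr_t)=\Phi_t^*(\dot\bfr_t+Y_t\bfr_t)=\Phi_t^*(\dot\bfr_t+\re F_t)$, the hypothesis $\re F_t=-\dot\bfr_t$ makes this vanish, so $\bfr_t\circ\Phi_t$ is constant in $t$ and equals its value $\bfr$ at $t=0$.

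For the second identity I would use the transport formula $\tfrac{d}{dt}\Phi_t^*\boldsymbol{\vartheta}_t=\Phi_t^*\big(\dot{\boldsymbol{\vartheta}}_t+\calL_{Y_t}\boldsymbol{\vartheta}_t\big)$, with $\dot{\boldsymbol{\vartheta}}_t=d^c\dot\bfr_t$, and compute $\calL_{Y_t}\boldsymbol{\vartheta}_t$ via Cartan's formula $\calL_{Y_t}\boldsymbol{\vartheta}_t=d(i_{Y_t}\boldsymbol{\vartheta}_t)+i_{Y_t}\,d\boldsymbol{\vartheta}_t$. Using the same two homogeneity facts one finds $i_{Y_t}\boldsymbol{\vartheta}_t=-\tfrac12\im F_t$, while contracting $Y_t$ into the Kähler form $d\boldsymbol{\vartheta}_t=dd^c\bfr_t$ gives $i_{Y_t}\,d\boldsymbol{\vartheta}_t=d^c(\re F_t)+\tfrac12 d(\im F_t)$. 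The two $\im F_t$ contributions cancel, leaving the clean identity $\calL_{Y_t}\boldsymbol{\vartheta}_t=d^c(\re F_t)$. Therefore $\dot{\boldsymbol{\vartheta}}_t+\calL_{Y_t}\boldsymbol{\vartheta}_t=d^c(\dot\bfr_t+\re F_t)=0$, so $\Phi_t^*\boldsymbol{\vartheta}_t$ is constant and equals $\boldsymbol{\vartheta}$ at $t=0$.

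Finally, $\bfr_t\circ\Phi_t=\bfr$ forces $\Phi_t(\calN)=\calN_t$; since $\Phi_t$ is a bundle map it descends to a diffeomorphism $\underline{\Phi}_t\colon M\to M_t$, and $\Phi_t^*\boldsymbol{\vartheta}_t=\boldsymbol{\vartheta}$ shows $\underline{\Phi}_t$ carries $\ker\theta$ to $\ker\theta_t$ (the contact forms being pullbacks of $\boldsymbol{\vartheta},\boldsymbol{\vartheta}_t$ by sections of $\calN,\calN_t$), so it is a contact diffeomorphism. The step I expect to be the main obstacle is the evaluation of $\calL_{Y_t}\boldsymbol{\vartheta}_t$: because $V_t$ is \emph{not} a holomorphic vector field, one cannot simply move $d^c$ through $\Phi_t^*$, and the identity $\calL_{Y_t}\boldsymbol{\vartheta}_t=d^c(Y_t\bfr_t)$ is not automatic. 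It holds here precisely because of the homogeneity of $F_t$ and the special form $\bfrho^A=X^A$ of the ambient gradient, and verifying the exact cancellation of the imaginary-part terms is the crux of the proof.
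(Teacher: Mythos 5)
Your proof is correct and follows essentially the same route as the paper: both arguments show the $t$-derivatives vanish via the transport/Cartan formulas, using the homogeneity identities $F_t^A\pa_A\bfr_t=F_t$ (equivalently $\bfr_t^A=X^A$ plus the Euler relation) to get $Y_t\bfr_t=\re F_t$, and the contractions $\boldsymbol{\vartheta}_t(Y_t)=-\tfrac12\im F_t$ and $Y_t\CR d\boldsymbol{\vartheta}_t=d^c\re F_t+\tfrac12 d\im F_t$, whose imaginary parts cancel exactly as you describe. Your packaging of the cancellation as $\calL_{Y_t}\boldsymbol{\vartheta}_t=d^c(\re F_t)$ and your explicit justification of the final ``contact diffeomorphism'' claim are only cosmetic additions to the paper's argument.
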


\begin{proof}
It suffices to verify that  the derivatives in $t$ vanish.
For the first equation,
$$
\frac{d}{dt}(\bfr_t\circ\Phi_{t})
=\Phi_{t}^*(Y_t\bfr_t+\dot\bfr_t)
=\Phi_{t}^*(\re F_t+\dot\bfr_t)=0.
$$
Here we have used the fact $F_t\in\wt\calE(1)$ to derive $F_t^A\pa_A\bfr_t=F_t$.
To prove the second one, we first note
\begin{align*}
	Y_t\CR d\boldsymbol{\vartheta}_t
	&=-\frac{i} {2}\conj\pa F_t+\frac{i} {2}\pa\conj F_t
	=d^c\re F_t+\frac12d\im F_t,
	\\
	\boldsymbol{\vartheta}_t(Y_t)&=
	\frac{i} 4\big((\bfr_t)_A(F_t)^A -(\bfr_t)_{\conj A}(\conj F_t)^{\conj A}\,\big)
	=-\frac{1}2\im F_t
\end{align*}
and $\dot{\boldsymbol{\vartheta}}_t=d^c\dot\bfr_t=-d^c\re F_t$.
Then Cartan's formula gives
\begin{equation*}
(\Phi_t^{-1})^*\frac{d}{dt}\Phi^*_t(\boldsymbol{\vartheta}_t)
=\calL_{Y_t}\boldsymbol{\vartheta}_t+\dot{\boldsymbol{\vartheta}}_t
= Y_t\CR d\boldsymbol{\vartheta}_t+d\big(\boldsymbol{\vartheta}_t( Y_t)\big)+\dot{\boldsymbol{\vartheta}}_t=0.
\qedhere
\end{equation*}
\end{proof}

One can show that $Y=Y_0$ agrees with the vector field
introduced in \cite{AGL} in the description of the deformation complex
by computing it in terms of the Levi metric for a fixed $\theta$.  
Using the matrix expression \eqref{wtgatN} of $\wt g$ in the frame $Z_A$, we have on $\calN$
\begin{equation*}
Y=
\re\big(\wt g^{A\conj B}(Z_{\conj B}F)Z_A\big)
=|z^0|^{-2}\re\big((\kappa F+\conj\xi F) Z_{0}+F\xi-
F^{\alpha}Z_{\alpha}
\big).
\end{equation*}
Hence, writing $F=|z^0|^2\underline F$ with a function $\underline F$ on $X$, we obtain
\begin{equation}\label{uY-exp}
\underline Y=\pi_*|z^0|^{-2}\re(F\xi-
 F^{\alpha}Z_{\alpha})
=
\re(\underline F\xi-
\underline F^{\alpha}Z_{\alpha}).
\end{equation}
It shows that the map $F\mapsto \underline Y$ agrees with the first operator $D$ of the deformation complex given in \S4 of \cite{AGL}.
In particular, if $F$ is pure imaginary, then setting $\underline F=i u$ and using
$\xi=N+(i/2)T$, we get
$$
\underline Y=-\frac12 u\,T-\re i\, u^\alpha Z_\alpha, 
$$ 
which agrees with the infinitesimal contact diffeomorphism
studied in \cite{CL}.

\subsection{Relation to the deformation complex}\label{deformation-complex}
Now we assume $\re F_t=-\dot\bfr_t$ as in the lemma above. Then
$\underline\Phi_t\colon M\to M_t$ give contact diffeomorphisms and we may compare the CR structures via the pull-back by $\underline\Phi_t$.

As in the definition of the Graham--Lee connection, 
let $Z_{\alpha}$ be sections of $T^{1,0}X$ near $M$ that give
a frame of $T^{1,0}M$ and set $Z_{n+1}=\xi$ so that $Z_\alpha, Z_{n+1}$ span $T^{1,0}X$.
For small $\abs{t}$, we may give a frame of $T^{0,1}M_t$ by
$$
Z_\calpha^t=Z_\calpha+\varphi_\calpha(t)\conj\xi,
$$
where $\varphi_\calpha(t)=-Z_\calpha\bfr_t/{\conj\xi}\bfr_t$.
Let $\underline\Phi_{t}^*Z^t_\calpha=(\underline\Phi_{t}^{-1}){}_*Z^t_\calpha$.
Then, since $\underline\Phi_t$ preserves the contact structure, there exists a tensor $\varphi_{\calpha\cbeta}\in\calE_{\calpha\cbeta}(1)$ such that
$$
\underline\Phi_{t}^*Z^t_\calpha=(1+O(t))Z_\calpha+t\varphi_\calpha{}^{\beta}Z_{\beta}+O(t^2)
$$
as $t\to 0$.
Here the index on $\varphi$ is raised by using the Levi metric $\boldsymbol{l}^{\alpha\conj\beta}$ of $M$.  We shall compute $\varphi_{\calpha\cbeta}$.

\begin{lem}
Let $F=F_0$. Then
$\varphi_{\calpha\cbeta}$ depends only on $f=F|_\calN\in\calE(1)$ and $f\mapsto\varphi_{\calpha\cbeta}$ defines a
CR invariant differential operator
$
P_{\calpha\cbeta}\colon\calE(1)\to\calE_{\calpha\cbeta}(1),
$
which is given in terms of the Tanaka--Webster connection by
\begin{equation}\label{Pab-expression}
2P_{\calpha\cbeta}f=
\nabla_{\conj\alpha\conj\beta}f-iA_{\conj\alpha\conj\beta}f.
\end{equation}
\end{lem}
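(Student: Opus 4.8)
The plan is to compute $\underline\Phi_t^* Z_\calpha^t$ to first order in $t$ and extract the $Z_\beta$-component, which defines $\varphi_\calpha{}^\beta$. First I would observe that the flow $\underline\Phi_t$ is generated by $\underline Y$, so the $t$-derivative at $t=0$ of the pushed-forward frame is governed by a Lie derivative. Concretely, since $\underline\Phi_{t}^*Z^t_\calpha = (\underline\Phi_t^{-1})_* Z_\calpha^t$, differentiating in $t$ at $t=0$ produces two contributions: one from the explicit $t$-dependence of $Z_\calpha^t = Z_\calpha + \varphi_\calpha(t)\conj\xi$ through $\dot\varphi_\calpha(0)$, and one from the flow itself, namely the Lie bracket $[\underline Y, Z_\calpha]$ (more precisely $-[\underline Y, \cdot]$, up to sign conventions from $(\underline\Phi_t^{-1})_*$). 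I would then project the resulting expression onto the $T^{1,0}M$ component spanned by the $Z_\beta$, discarding the $\conj\xi$ and $T$ directions, to read off $\varphi_\calpha{}^\beta$.

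Next I would carry out this computation using the explicit form \eqref{uY-exp} of $\underline Y = \re(\underline F\xi - \underline F^\alpha Z_\alpha)$, together with the Graham--Lee structure equations \eqref{GLstr} from Proposition \ref{GLcurv}, which encode the brackets of $\xi$, $\conj\xi$, and $Z_\alpha$ in terms of the connection forms $\varphi_\alpha{}^\beta$, the torsion $A^\alpha{}_{\conj\beta}$, and the transverse curvature $\kappa$. The key point is that $\dot\varphi_\calpha(0)$ is expressible through $\dot\bfr = -\re F$ and the frame, so that when combined with the bracket term the dependence on the transverse and ambient data cancels and only $f = F|_\calN$ survives. Restricting everything to $\calN$ (equivalently to $M$) and rewriting covariant derivatives along $\conj\alpha$ in Tanaka--Webster terms should then yield \eqref{Pab-expression}, where the holomorphic part of $F$ contributes $\nabla_{\conj\alpha\conj\beta}f$ and the torsion enters via the commutator relation $2f_{[0\alpha]} = A_{\alpha\beta}f^\beta$ from \eqref{commf}, accounting for the $-iA_{\conj\alpha\conj\beta}f$ term.

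To establish CR invariance and dependence only on $f$, I would note that $\varphi_\calpha{}^\beta$ is constructed from the intrinsic contact diffeomorphism $\underline\Phi_t\colon M\to M_t$ comparing CR structures, a purely CR-geometric object; any two Fefferman defining functions $\bfr$ and choices of $F$ with the same restriction $f=F|_\calN$ generate the same deformation of the CR structure on $\calN$, since the normal component of $Y$ is fixed by $\re F_t = -\dot\bfr_t$ and only the tangential data matters for the induced map on $T^{0,1}M_t$. The weight bookkeeping $\calE(1)\to\calE_{\calpha\cbeta}(1)$ follows from tracking homogeneity under $\delta_\lambda$, using that $F\in\wt\calE(1)$ and the index-raising by the weighted Levi metric $\boldsymbol{l}^{\alpha\conj\beta}\in\calE^{\alpha\conj\beta}(-1)$.

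I expect the main obstacle to be the careful bookkeeping of the transverse ($\conj\xi$ and $T$) directions: the raw Lie-derivative computation will produce terms involving $\kappa$, $\xi\bfr$, and derivatives of $\varphi_\calpha(t)$ that are \emph{not} manifestly intrinsic to $M$, and one must verify that precisely these terms cancel or drop out upon projecting to $T^{1,0}M$ and restricting to $\calN$. Confirming that the final answer depends only on $f$ and not on the off-$\calN$ extension of $F$ — so that $P_{\calpha\cbeta}$ is genuinely a well-defined CR operator — is the subtle part, and it is what ultimately forces the clean form \eqref{Pab-expression}.
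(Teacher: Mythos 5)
Your proposal follows essentially the same route as the paper's proof: differentiate $\underline\Phi_t^*Z^t_\calpha$ at $t=0$, note that the explicit $t$-dependence $\dot\varphi_\calpha\,\conj\xi$ lies in $T^{0,1}$ and is killed by the projection, reduce the variation to the Lie bracket $[\uY,Z_\calpha]$, evaluate it using \eqref{uY-exp} and the Graham--Lee structure equations, and deduce invariance from the fact that $\uY|_M$ is determined by $f$ alone. Two small corrections to your sketch: the flow contributes $+[\uY,\cdot\,]$ (the Lie derivative convention for $\delta_0(\underline\Phi_t^{-1})_*$), and in the actual computation the torsion term $-iA_{\calpha\cbeta}f$ arises directly from $\uF[\xi,Z_\calpha]$ via the structure equation \eqref{GLstr}, not from the commutator relation \eqref{commf} -- no $T$-derivatives of $f$ ever appear, since the $\xi$ components are discarded before they can.
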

\begin{proof}
By the definition, $\varphi_{\conj\alpha}{}^{\beta}$ is given by the derivative
$$
\delta_0(\underline\Phi_{t}^*Z^t_{\conj\alpha})=\varphi_{\conj\alpha}{}^{\beta}Z_{\beta}
\mod T^{0,1} X,
$$
where $\delta_0=(d/dt)|_{t=0}$.
On the other hand, noting that $\delta_0 Z_\calpha^t=\dot\varphi_\calpha\,{\conj\xi}\in
 T^{0,1}\wt X$, we have
$$
\delta_0(\underline\Phi_{t}^*Z^t_\calpha)
=[\uY,Z_\calpha]\mod T^{0,1} X.
$$
Since $\underline Y$ along $M$  is determined by $f\in\calE(1)$,
we see that $f\mapsto\varphi_{\calpha\cbeta}$ is a CR invariant differential operator.

To compute $[\uY,Z_\calpha]$, we use \eqref{uY-exp}, which gives
$$
2\uY=\uF\xi-\uF^{\alpha}Z_{\alpha}\mod T^{0,1} X.
$$
It follows from the structure equation \eqref{GLstr} that
\begin{align*}
2[\uY,Z_{\conj\alpha}]&=[\uF\xi-
\uF^{\beta}Z_{\beta},Z_{\conj \alpha}]\mod T^{0,1} X\\
&
=\uF[\xi,Z_{\conj \alpha}]-[
\uF^{\beta}Z_{\beta},Z_{\conj \alpha}]\mod T^{0,1} X \text{ and }\xi\\
&
=-i\uF A_{\conj \alpha}{^\beta}Z_\beta+
(\nabla_{\conj\alpha}\,\uF^{\beta})Z_{\beta}\mod T^{0,1} X \text{ and }\xi. 
\end{align*}
Thus we get  $2\varphi_{\conj\alpha\conj\beta}=\nabla_{\conj\alpha\conj\beta}F-iA_{\conj\alpha\conj\beta}F$,
which gives \eqref{Pab-expression}.
In this computation, we have used an admissible frame with respect to a contact form normalized by $dz^1\wedge\cdots\wedge dz^{n+1}$. However, one can easily check that $\nabla_{\conj\alpha\conj\beta}f-iA_{\conj\alpha\conj\beta}f$, $f\in\calE(1)$, is invariant under the scaling and 
\eqref{Pab-expression} holds for any $\theta$.
\end{proof}

We can also write $P_{\conj\alpha\conj\beta}$ in terms of the ambient metric $\wt g$.
For  $\wt g$, choose a frame $Z_A$ of $T^{1,0}\wt X$ as in \S\ref{sec-ambient-metric}. Since $\wt\nabla$ is torsion-free and preserves the complex structure, we have
\begin{align*}
[Y, Z_\calpha]&
=\wt\nabla_{Y}Z_{\calpha}-\wt\nabla_{Z_\calpha}Y \\
&=-\frac12(\wt\nabla{}_\calpha{}^{B} {F} )Z_{B} 
\mod T^{0,1}\wt X.
\end{align*}
Since $\pi_*([Y, Z_\calpha])=[\underline Y, Z_\calpha]$, 
 we have $2\varphi_\calpha{}^\beta=-\wt\nabla{}_\calpha{}^{\beta}{F}$.
Lowering the index $\beta$ of both sides  by using $\boldsymbol{l}_{\alpha\conj\beta}$ and $\wt g_{\alpha\cbeta}=-\boldsymbol{l}_{\alpha\cbeta}$ respectively, we have
$
\varphi_{\calpha\cbeta}=2\wn_{\calpha\cbeta}F|_{\calN}.
$
Note that $\wn_{\calpha\cbeta}F|_{\calN}$ depends only on $f$. If ${F}=\bfr \psi$ with $\psi\in\wt\calE(0)$, then
$$
\wt\nabla_{\cA\,\cB}{F}=\bfr_\cA\psi_\cB+\bfr_\cB\psi_\cA+O(\bfr)
$$
because $\bfr_{\cA\,\cB}=0$, which is 
the conjugate of \eqref{rhoAB}.
Since $\conj\pa\bfr$ vanishes on $T^{0,1}\calN$, the restriction
of $\wt\nabla_{\cA\,\cB}{F}$ vanishes on $\otimes^2 T^{0,1}\calN$. 
Moreover, since
$$
\bfr^\cA\wn_{\cA\,\cB} F=\wn_{\cB}(\bfr^\cA F_\cA)-\tensor{\bfr}{^\cA_\cB}F_\cA=0,
$$
 we see that $\wt\nabla_{\calpha\cbeta}F|_\calN\in\calE_{\calpha\cbeta}(1)$ is well-defined.  This gives another proof of the CR invariance of 
$
P_{\calpha\cbeta}.
$

Note that $P_{\calpha\cbeta}$ is the first operator in the deformation complex of CR structures, which can also be seen as a subcomplex in the BGG sequence constructed by \cite{CSS}; see also \cite{AGL}.
From this point of view, we can compare $P_{\calpha\cbeta}$ with
$\conj\pa_b$ operator on functions, which is the first operator in the BGG sequence of the trivial representation.

The operator $\conj\pa_b$ can be seen as the restriction to $\calN$ of $\conj\pa$ on $\wE(0)$.  A function $f\in\calE(0)$ on $\calN$
can be extended to a holomorphic function on the pseudoconvex side of $\calN$ if and only if $\conj\pa_b f=0$.
We have analogous result for $P_{\calpha\cbeta}$ and $\wn_{\cA\,\cB}$ on $\wE(1)$.  It is clear that if $\wn_{\cA\,\cB}F=0$, then $f=F|_\calN$ satisfies $P_{\calpha\cbeta}f=0$. We can prove the converse in a formal sense.  To simplify the notation, we set $P_{\alpha\beta}f=\conj{P_{\calpha\cbeta}\conj f}$ and consider the equation $P_{\alpha\beta}f=0$.

\begin{prop}\label{ext-prop} 
If $f\in\calE(1)$ satisfies $P_{\alpha\beta}f=0$, then there exists an  extension $F\in\wE(1)$ of $f$ such that $\wt\nabla_{AB}F=O(\bfr^\infty)$.
\end{prop}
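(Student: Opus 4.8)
The plan is to construct $F$ as a formal power series in the Fefferman defining function $\bfr$ and to reduce the system $\wn_{AB}F=O(\bfr^\infty)$ to a single scalar transport equation, propagating the vanishing off $\calN$ by means of two structural identities. Writing $H_{AB}:=\wn_{AB}F$ for a candidate extension $F\in\wE(1)$, the two identities I would establish first are, for \emph{every} $F\in\wE(1)$,
\[
\bfr^A H_{AB}=0,\qquad \wn_C H_{AB}\ \text{is totally symmetric in the holomorphic indices }A,B,C.
\]
The first follows from $\bfr^A F_A=F$ (homogeneity of weight one), from $\wn_A\bfr^B=\delta_A^B$, and from $\bfr_{AB}=0$ (cf.\ \eqref{rhoAB}), giving $\bfr^A\wn_A\wn_B F=\wn_B(\bfr^A F_A)-(\wn_B\bfr^A)F_A=F_B-F_B=0$; the second is the vanishing of the $(2,0)$ part of the curvature of the K\"ahler metric $\wt g$, which yields $[\wn_C,\wn_A]=0$ on all tensors when contracted with two holomorphic directions. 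In the frame $Z_A$ of \S\ref{sec-ambient-metric} one has $\bfr^A=\delta_0^A$ on $\calN$ and $\bfr_\alpha=0$, $\bfr_{n+1}=\xi\bfr\ne0$, so the first identity shows that $H_{0B}$ is slaved to the remaining components and that the only genuinely free components are $H_{\alpha\beta}$, $H_{\alpha(n+1)}$, and $H_{(n+1)(n+1)}$.

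Next I would solve the single scalar equation $H_{(n+1)(n+1)}=O(\bfr^\infty)$ recursively. For $F=\sum_{j\ge0}\bfr^j f_j$ with $f_0=f$, the contribution of $f_j$ to $H_{(n+1)(n+1)}$ has leading term $j(j-1)(\xi\bfr)^2 f_j$, the lower-order term being killed by $\bfr_{AB}=0$; since this coefficient is invertible for $j\ge2$, the recursion determines $f_j$ for all $j\ge2$ in terms of $f_0$ and $f_1$, while $f_1$ remains free. With $H_{(n+1)(n+1)}=O(\bfr^\infty)$ in hand, the total-symmetry identity gives the transport equations
\[
\wn_{n+1}H_{\alpha(n+1)}=\wn_\alpha H_{(n+1)(n+1)}=O(\bfr^\infty),\qquad
\wn_{n+1}H_{\alpha\beta}=\wn_\alpha H_{\beta(n+1)}.
\]
Because $\wn_{n+1}$ has nonvanishing transverse symbol ($\xi\bfr\ne0$), a Borel/ODE argument in the normal variable upgrades the combination "boundary value zero plus normal derivative $O(\bfr^\infty)$" to "$O(\bfr^\infty)$". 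Thus, once the boundary values $H_{\alpha(n+1)}|_\calN$ and $H_{\alpha\beta}|_\calN$ are shown to vanish, the first transport kills $H_{\alpha(n+1)}$; the second, now that $H_{\beta(n+1)}=O(\bfr^\infty)$, kills $H_{\alpha\beta}$; the slaving identity kills $H_{0B}$; and the $B=n+1$ case of $\bfr^A H_{AB}=0$ forces the remaining consistency, completing $\wn_{AB}F=O(\bfr^\infty)$.

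It remains to match the two boundary values, and this is where the hypothesis enters and where the main difficulty lies. The tangential value is immediate: by the lemma identifying $\wn_{\alpha\beta}F|_\calN$ with $P_{\alpha\beta}f$, the hypothesis $P_{\alpha\beta}f=0$ is exactly $H_{\alpha\beta}|_\calN=0$. The mixed value $H_{\alpha(n+1)}|_\calN=0$ is the crux. I would obtain it by expanding $\bfr^A H_{A\alpha}=0$ to first order in $\bfr$: using $H_{0\alpha}|_\calN=0$ and $H_{\gamma\alpha}|_\calN=P_{\gamma\alpha}f=0$, the order-$\bfr$ coefficient relates $H_{(n+1)\alpha}|_\calN$ to a normal derivative of $H_{0\alpha}$, which in turn is controlled through the total-symmetry identity together with the homogeneity relation $\bfr^C\wn_C H_{AB}=-H_{AB}$; combining these linear relations pins down, and forces the vanishing of, $H_{\alpha(n+1)}|_\calN$, equivalently fixes the free datum $f_1$ consistently. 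This base step is the precise analog, for the first operator $P_{\alpha\beta}$ of the deformation complex, of the first step in the formal holomorphic extension of a $\conj\pa_b$-closed function, and carrying out the accompanying index bookkeeping with the correct K\"ahler--Bianchi and weight identities is the principal technical obstacle.
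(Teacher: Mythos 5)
Your two structural identities are correct, and they are in fact the same ones the paper's argument rests on ($\bfr^A\wn_{AB}F=0$ comes from $\bfr_{AB}=0$, and holomorphic covariant derivatives commute because $\wt g$ is K\"ahler). But two points in your scheme do not hold up. The lesser one is the transport step: it is not triangular as you claim. The covariant identity gives
\begin{equation*}
Z_{n+1}H_{\alpha(n+1)}=Z_\alpha H_{(n+1)(n+1)}
-\Gamma_{\alpha(n+1)}^{D}H_{D(n+1)}+\Gamma_{(n+1)(n+1)}^{D}H_{\alpha D},
\end{equation*}
so the equation for $H_{\alpha(n+1)}$ is coupled back to $H_{\alpha\beta}$ through the Christoffel terms; in particular $\wn_\alpha H_{(n+1)(n+1)}$ is \emph{not} $O(\bfr^\infty)$ merely because the component $H_{(n+1)(n+1)}$ is. This is repairable: since the coupling is linear and $Z_\alpha$, $T$ are tangent to the level sets of $\bfr$, a simultaneous induction on the order of vanishing of all components goes through, so you should run the induction jointly rather than killing $H_{\alpha(n+1)}$ first and $H_{\alpha\beta}$ second.

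The genuine gap is the step you yourself flag as the crux: the vanishing of $H_{\alpha(n+1)}|_\calN$, equivalently the correct choice of $f_1$. The tools you propose cannot settle it, for a structural reason: the identities $\bfr^AH_{AB}=0$, total symmetry of $\wn_CH_{AB}$, and $\bfr^C\wn_CH_{AB}=-H_{AB}$ hold identically for \emph{every} extension $F\in\wE(1)$ of $f$, whereas $H_{\alpha(n+1)}|_\calN$ genuinely depends on $f_1$: replacing $f_1$ by $f_1+g_1$ changes it by $\bfr_{n+1}\,Z_\alpha g_1$, which is generically nonzero. Hence no consequence of those identities alone can force its vanishing or pin $f_1$ down. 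Concretely, the homogeneity relation only controls derivatives along the Euler field $Z_0=\bfr^A\pa_A$, not the normal derivative $\pa_\bfr H_{0\alpha}|_\calN$ entering your first-order expansion of $\bfr^AH_{A\alpha}=0$; and that normal derivative is itself slaved back to $H_{(n+1)\alpha}$ by the very same identity, so the relation you invoke is circular. At best you can extract a curl-type ($\pa_b$-closedness) condition on $H_{\alpha(n+1)}|_\calN$, after which choosing $f_1$ amounts to solving $Z_\alpha g_1=-\bfr_{n+1}^{-1}H_{\alpha(n+1)}|_\calN$, a global $\pa_b$-solvability problem that is not free (and is already locally delicate when $n=1$, by Lewy). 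The paper supplies exactly the missing ingredient: it first normalizes the extension by $\wDelta F=O(\bfr^2)$ --- this is what fixes $f_1$, cf.\ \eqref{harmonic-ext} --- and then uses the contracted Ricci identity $\wn^A{}_{AB}F=\wn_B\wDelta F+\Ric_B{}^A\wn_AF=O(\bfr)$, valid because $\Ric[\wt g]=O(\bfr^n)$ by the Monge--Amp\`ere equation; writing $\wn_{AB}F=2\bfr_{(A}\varphi_{B)}+\bfr\varphi_{AB}$, this divergence identity is what kills $\varphi_B$, i.e.\ precisely the mixed components you cannot reach. Some normalization of this kind, tied to $\wDelta$ and the Einstein property of the ambient metric, must be added to your argument before the base step of your induction can be completed.
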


\begin{proof}
Let $F$ be an extension of $f$ such that $\wDelta F=O(\bfr^2)$; see
\eqref{harmonic-ext} for a construction of $F$.
Since $\bfr^A\wn_{AB} F=0$
and $\wn_{\alpha\beta} F|_\calN=2P_{\alpha\beta} f=0$, we may write
\begin{equation}
	\label{BGG-extension-1}
	\wn_{AB} F=2\bfr_{(A}\varphi_{B)}+\bfr\varphi_{AB}
\end{equation}
with contravariant tensors $\varphi_A$ and $\varphi_{AB}$ on $\wt X$  that are invariant under $\delta_\lambda^*$.
From $\bfr^A\wn_{AB} F=0$, we have
$
0=\bfr_B \bfr^A\varphi_A+\bfr ( \varphi_B+\bfr^A\varphi_{AB})
$
and hence
$$
\varphi_B+\bfr^A\varphi_{AB}=0\mod \bfr_B.
$$
Applying $\wn^A$ to \eqref{BGG-extension-1},
we get
\begin{align*}
\wn^A{}_{AB} F&=\wn^{A}(\bfr_A\varphi_B)+\wn^A(\bfr_B\varphi_A)+\bfr^A\varphi_{AB}+O(\bfr)\\
&=(n+3)\varphi_B+\bfr^A\varphi_{AB}+O(\bfr)\mod \bfr_B
\\
&=(n+2)\varphi_B+O(\bfr)\mod \bfr_B. 
\end{align*}
On the other hand, the left-hand side gives
$$
\wn^A{}_{AB} F=\wn_B\wt\Delta F+\Ric_B{}^A\wn_A F=O(\bfr).
$$
It follows that
$\varphi_B=O(\bfr)$ modulo $\bfr_B$.
Thus we may write
$$
\wn_{AB} F=\bfr_{A}\bfr_B\varphi+\bfr\varphi_{AB}
$$
with $\varphi\in\wE(-1)$.
Then $\bfr^A\wn_{AB} F=0$ and $\wn^A{}_{AB} F=O(\bfr)$ respectively give
\begin{align*}
\bfr_B\varphi+\bfr^A\varphi_{AB}&=0,
\\
(n+2)\bfr_B\varphi+\bfr^A\varphi_{AB}&=O(\bfr).
\end{align*}
Therefore, we get $\bfr_B\varphi=O(\bfr)$ and so $\wn_{AB} F=O(\bfr).
$ 
Now we set
$$
\wn_{AB} F=\bfr^m\varphi_{AB}
$$
for an integer $m\ge1$; we know the existence of such an $F$ when $m=1$. Then
$$
0=\wn_{[CA]B} F=m\bfr^{m-1}\bfr{}_{[C}\varphi{}_{A]B}+O(\bfr^{m})
$$
implies $\bfr{}_{[C}\varphi{}_{A]B}=O(\bfr)$.  Since $\varphi_{AB}$ is symmetric, we may write
$$
\varphi_{AB}=\bfr_A\bfr_B \psi+O(\bfr)$$
with $\psi\in\wt\calE(-m-1)$.
Hence, replacing $ F$ by $ F-\bfr^{m+2}\psi/(m+2)(m+1)$, we 
have
$
\wn_{AB} F=O(\bfr^{m+1}).
$
By induction on $m$, we may find an $F$ such 
that $\wn_{AB} F=O(\bfr^m)$ for each $m$. Thus Borel's lemma gives an $F$ satisfying $\wn_{AB} F=O(\bfr^\infty)$.
\end{proof}

\begin{rem}\rm
The extension $F$ in the proposition above is not unique even formally.
In fact, we have $\wt\nabla_{AB}(\bfr \psi)=0$ for any antiholomorphic $\psi\in\wt\calE(0)$.
One can easily see that this is the only ambiguity allowed in the extension $F$ of $f$ satisfying $\wt\nabla_{AB}F=0$.
\end{rem}

\section{The variational formula of total $Q$-prime curvature}

\subsection{Reduction to the variation of the potential function}
Let $M_t$ be a family of compact strictly pseudoconvex real hypersurfaces in $X$
defined by $\rho_t\in C^\infty(X)$ such that $\rho(z,t)=\rho_t(z)$ is smooth in $(z,t)\in X\times (-1,1)$.
We make no further assumption on the function $\rho_t$ in this subsection.
We will write $\Omega_t=\set{\rho_t>0}$ and $\Omega^\epsilon_t=\set{\rho_t>\epsilon}$.
For $t=0$, the subscript $0$ will be omitted.

Let $g_t$ be the K\"ahler metric, defined on a one-sided neighborhood of $M_t$, associated to
$dd^c G_t$, where $G_t=\log\rho_t$.
The volume form and the squared norm for $g_t$ are denoted by $\vol_t$ and $|\cdot|^2_t$.
We write
\begin{equation*}
	\int_{\Omega^\epsilon_t}\abs{dG_t}_{t}^2\vol_{t}
	=w^{(0)}_t\epsilon^{-n-1}+\dots+w^{(n)}_t\epsilon^{-1}+L_t\log\epsilon+R_t^\epsilon,
\end{equation*}
where $R_t^\epsilon$ as well as its $t$-derivative is uniformly $O(1)$ as $\epsilon\to 0$.
By differentiating this at $t=0$, we obtain
\begin{equation*}
\delta_0\int_{\Omega^\epsilon_t}\abs{dG_t}_{t}^2\vol_{t}
	=\dot w^{(0)}\epsilon^{-n-1}+\dots+\dot w^{(n)}\epsilon^{-1}
	+\dot L\log\epsilon+O(1).
\end{equation*}
Since we are interested in the coefficient of $\log\epsilon$, it is useful to introduce the notation
$\lp$:
$$
\lp F(\epsilon)=c_0\quad\text{when}\quad
F(\epsilon)=\sum_{j=1}^{N}c_j\epsilon^{-j}+c_0\log\epsilon+O(1).
$$ 
Then we may write
\begin{equation}
	\label{eq:lp_part_of_time_derivative_of_dG_integral}
	\lp\delta_0\int_{\Omega^\epsilon_t}\abs{dG_t}_{t}^2\vol_{t}=\dot L.
\end{equation}

\begin{prop}\label{dot_L_prop}
Let $\dot G=\delta_0 G_t$. Then
\begin{equation}
	\label{eq:variation_of_L_and_antegral_of_u}
	\dot L=2(n+1)(n+2)\lp \int_{\Omega^\epsilon}\dot G\vol.
\end{equation}
\end{prop}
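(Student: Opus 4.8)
\emph{Plan of proof.} The plan is to differentiate the integral on the left of \eqref{eq:lp_part_of_time_derivative_of_dG_integral} under the integral sign, integrate by parts so that a bulk term proportional to $\dot G\,\vol$ is isolated, and then check that every boundary contribution is killed by $\lp$. Starting from the pointwise identity used in the proof of Theorem \ref{renormalized-volume-thm}, I would write $\abs{dG_t}^2_t\vol_t=\alpha_t$ with
\begin{equation*}
\alpha_t=c\,\pa G_t\wedge\conj\pa G_t\wedge\omega_t^n,\qquad \omega_t=dd^cG_t,\quad G_t=\log\rho_t,
\end{equation*}
where $c$ is the constant from that proof. Since $\Omega_t^\epsilon=\set{\rho_t>\epsilon}$ is a moving region and $\alpha_t$ is a top-degree form, the transport (Reynolds) formula gives
\begin{equation*}
\delta_0\int_{\Omega_t^\epsilon}\alpha_t=\int_{\Omega^\epsilon}\delta_0\alpha_t+\int_{\pa\Omega^\epsilon}\iota_V\alpha,
\end{equation*}
where $V$ is any smooth vector field with $V\rho=-\dot\rho$ near $M$; for fixed $\epsilon>0$ everything here is smooth, the singularities living only at $\rho=0$.

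Next I would differentiate the integrand. With $\dot\omega=dd^c\dot G$,
\begin{equation*}
\delta_0\alpha_t=c\Big(\pa\dot G\wedge\conj\pa G\wedge\omega^n+\pa G\wedge\conj\pa\dot G\wedge\omega^n+n\,\pa G\wedge\conj\pa G\wedge\omega^{n-1}\wedge\dot\omega\Big).
\end{equation*}
Each term is then integrated by parts over $\Omega^\epsilon$, using that $\omega$ is closed and that $\pa\conj\pa G$ is a constant multiple of $\omega$. The first two terms combine, modulo the exact form $d\big(\dot G\,(\conj\pa-\pa)G\wedge\omega^n\big)$, into a multiple of $\dot G\,\omega^{n+1}$; the third, after two integrations by parts transferring $\pa\conj\pa$ onto $\Psi:=n\,\pa G\wedge\conj\pa G\wedge\omega^{n-1}$ and using $\pa\conj\pa\Psi=n\,\omega^{n+1}$, also reduces to a multiple of $\dot G\,\omega^{n+1}$ plus boundary terms. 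Tracking the constant $c$ and using $\omega^{n+1}=(n+1)!\,\vol$ produces exactly the coefficient $2(n+1)(n+2)$, so that
\begin{equation*}
\int_{\Omega^\epsilon}\delta_0\alpha_t=2(n+1)(n+2)\int_{\Omega^\epsilon}\dot G\,\vol+\int_{\pa\Omega^\epsilon}\beta_\epsilon
\end{equation*}
for an explicit form $\beta_\epsilon$ assembled from the boundary pieces above.

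The crux of the argument, and the step I expect to be most delicate, is to show that neither $\int_{\pa\Omega^\epsilon}\beta_\epsilon$ nor $\int_{\pa\Omega^\epsilon}\iota_V\alpha$ contributes to $\lp$. The key observation is that in every one of these boundary integrands the potential $G=\log\rho$ enters only through its derivatives $\pa G$, $\conj\pa G$, $\omega=dd^cG$ (and $\pa\Psi=n\,\pa G\wedge\omega^n$), while $\dot G=\dot\rho/\rho$ and $\conj\pa\dot G$ are rational in $\rho$ with smooth numerators; in particular no \emph{undifferentiated} $\log\rho$ survives. Hence each integrand is a finite sum of terms $\rho^{-k}\cdot(\text{smooth form on }X)$, so restricting to $\set{\rho=\epsilon}$ and expanding the smooth coefficients in powers of $\rho$ yields a pure Laurent series in $\epsilon$ with no $\log\epsilon$ term. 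Thus $\lp$ annihilates both boundary integrals, and with \eqref{eq:lp_part_of_time_derivative_of_dG_integral} we conclude
\begin{equation*}
\dot L=\lp\,\delta_0\int_{\Omega_t^\epsilon}\abs{dG_t}^2_t\vol_t=2(n+1)(n+2)\,\lp\int_{\Omega^\epsilon}\dot G\,\vol.
\end{equation*}
The same constant can be checked more quickly by first using Stokes' theorem to write $\int_{\Omega^\epsilon}\abs{dG}^2\vol=2(n+1)\int_{\Omega^\epsilon}G\,\vol$ modulo boundary terms and then differentiating, the extra factor $(n+2)$ arising from $\delta_0\vol_t$; that route is cleaner for bookkeeping but introduces a boundary term carrying an undifferentiated $\log\rho$, so the variant above is better adapted to the $\lp$ analysis.
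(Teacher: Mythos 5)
Your proposal is correct and follows essentially the same route as the paper: the paper likewise splits $\delta_0\int_{\Omega^\epsilon_t}\abs{dG_t}_t^2\vol_t$ into the variation of the integrand plus a boundary-wiggle term (your transport formula), reduces the bulk part by integration by parts to $2(n+1)(n+2)\int_{\Omega^\epsilon}\dot G\,\vol$, and kills every boundary contribution by exactly your observation that the boundary integrands are Laurent in $\rho$ with no undifferentiated $\log\rho$, hence produce no $\log\epsilon$ (this is the content of the paper's Lemma \ref{divergence-lem} together with an explicit parametrization of the moving boundary). The only difference is bookkeeping: the paper carries out the bulk computation in covariant index notation, using $G_{a\conj{b}c}=0$ and $\tensor{G}{_a^a}=-(n+1)$, where you use differential forms and the identities $\pa\conj\pa G\propto\omega$ and $\pa\conj\pa\Psi=n\,\omega^{n+1}$.
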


To prove the proposition, we break the variation on the left-hand side of
\eqref{eq:lp_part_of_time_derivative_of_dG_integral} into two parts.
The first one is the contribution of the change of the integrand,
and the second one is that of the wiggle of the boundary $M^\epsilon=\pa\Omega^\epsilon$:
\begin{equation}
	\label{eq:breaking_total_variation}
	\begin{split}
		\delta_0\int_{\Omega^\epsilon_t}\abs{dG_t}_t^2\vol_t
		&=\int_{\Omega^\epsilon}
		\delta_0(\abs{dG_t}_t^2\vol_t)
		+\lim_{t\to 0}\frac{1}{t}\left(\int_{\Omega^\epsilon_t}\abs{dG_t}_t^2\vol_t
		-\int_{\Omega^\epsilon}\abs{dG_t}_t^2\vol_t\right)\\
		&=\int_{\Omega^\epsilon}
		\delta_0(\abs{dG_t}_t^2\vol_t)
		+\lim_{t\to 0}\frac{1}{t}\left(\int_{\Omega^\epsilon_t}\abs{dG}^2\vol
		-\int_{\Omega^\epsilon}\abs{dG}^2\vol\right)\\
		&=:\intf+\ints.
	\end{split}
\end{equation}
Note that, when $\epsilon>0$ is fixed, the integral of $\abs{dG_t}^2_t\vol_t$ over $\Omega^\epsilon$ and
that of $\abs{dG}^2\vol$ over $\Omega^\epsilon_t$ make sense for sufficiently small $\abs{t}$.
The second equality of \eqref{eq:breaking_total_variation} is because
$\abs{dG_t}^2_t\vol_t$ is uniformly convergent to $\abs{dG}^2\vol$ in
a neighborhood of $\cl{\Omega^\epsilon}$.

The first variation of $\abs{dG}^2\vol=2\tensor{G}{^a}\tensor{G}{_a}\vol$ is
\begin{equation*}
	2(-\tensor{\dot{g}}{^a^{\conj{b} }}\tensor{G}{_a}\tensor{G}{_{\conj{b} }}
	+2\re(\tensor{G}{^a}\tensor{\dot{G}}{_a})
	+\tensor{G}{^a}\tensor{G}{_a}\tensor{\dot{g}}{_b^b})\vol.
\end{equation*}
If we write $u=\dot{G}$, then
\begin{equation*}
	\intf=2\int_{\Omega^\epsilon}(\tensor{u}{^a^{\conj{b} }}\tensor{G}{_a}\tensor{G}{_{\conj{b} }}
	+2\re(\tensor{G}{^a}\tensor{u}{_a})-\tensor{G}{^a}\tensor{G}{_a}\tensor{u}{_b^b})\vol.
\end{equation*}
We integrate this by parts. We first observe that there is no contribution from the boundary term to the log term.

\begin{lem}\label{divergence-lem}
Let $f_adz^a$ be a $(1,0)$-form on $\Omega$ such that $\rho^m f_a dz^a$ extends smoothly up to the boundary for some integer $m$. Then
$$
\lp\int_{\Omega^\epsilon}\nabla^af_a\,\vol=0.
$$
\end{lem}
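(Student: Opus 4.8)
The plan is to recognize $(\nabla^a f_a)\,\vol$ as an exact form and thereby reduce the statement to an asymptotic analysis of a boundary integral over the level set $M^\epsilon=\{\rho=\epsilon\}$. First I would set $\phi=f_a\,dz^a$ and form the $(n+1,n)$-form $\Psi=\phi\wedge\omega^n/n!$. Because $\Omega$ has complex dimension $n+1$, the $(n+2,n)$-component $\pa\Psi$ vanishes for bidegree reasons, so $d\Psi=\conj\pa\Psi$; and since $\omega=dd^c\log\rho$ is closed we have $\conj\pa(\omega^n)=0$, whence $d\Psi=\conj\pa\phi\wedge\omega^n/n!$. The Kähler trace identity $\beta\wedge\omega^n/n!=(\tr_\omega\beta)\,\vol$ for a $(1,1)$-form $\beta$, together with $\nabla_{\conj b}f_a=\pa_{\conj b}f_a$ for the Chern connection of $\omega$, then gives $d\Psi=c\,(\nabla^a f_a)\,\vol$ for an explicit nonzero constant $c$.

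Once exactness is in hand, the second step is Stokes' theorem. Since $\pa\Omega^\epsilon=M^\epsilon$, integrating over $\Omega^\epsilon$ yields
$$
\int_{\Omega^\epsilon}\nabla^a f_a\,\vol=c^{-1}\int_{M^\epsilon}\phi\wedge\frac{\omega^n}{n!}.
$$
Because the metric $g$ is only defined near $M$, I would localize to a collar $\{\epsilon<\rho<\delta\}$: the contribution of the fixed region $\{\rho\ge\delta\}$ and of the inner boundary $\{\rho=\delta\}$ is independent of $\epsilon$, hence does not affect $\lp$. Thus it suffices to show that the boundary integral on the right carries no $\log\epsilon$ term.

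For the third step I would expand the integrand in powers of $\rho$. Writing $\omega=\rho^{-1}dd^c\rho-\rho^{-2}d\rho\wedge d^c\rho$ and using that $d\rho\wedge d^c\rho$ squares to zero, one sees that $\omega^n$ is a sum of terms $\rho^{-n}$ and $\rho^{-n-1}$ times forms smooth up to the boundary. On $M^\epsilon$ the pullback of $d\rho$ vanishes, so only the $\rho^{-n}(dd^c\rho)^n$ part survives, and with the hypothesis that $\rho^m\phi$ is smooth up to the boundary we obtain
$$
\phi\wedge\frac{\omega^n}{n!}\bigg|_{M^\epsilon}=\epsilon^{-m-n}\,\sigma(\epsilon),
$$
where $\sigma(\epsilon)$ is the restriction to $M^\epsilon$ of a form smooth up to the boundary. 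Parametrizing the collar by the flow of $N$, the integral $\int_M\sigma(\epsilon)$ is $C^\infty$ in $\epsilon$, so $\int_{M^\epsilon}\phi\wedge\omega^n/n!$ expands in integer powers of $\epsilon$ with no logarithmic term. Hence its $\lp$ vanishes, and the lemma follows.

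I expect the main obstacle to be the bookkeeping in the first step—verifying $d\Psi=c\,(\nabla^a f_a)\,\vol$ with the correct constant and confirming that the $\conj\pa$-derivative agrees with the covariant divergence under the Chern connection—rather than the asymptotic step, which is essentially the observation that $dd^c\log\rho$ produces only integer powers of $\rho$ and no logarithms.
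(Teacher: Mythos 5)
Your proof is correct and takes essentially the same route as the paper: the paper's proof consists precisely of the identity $d(f_a dz^a\wedge\omega^n)=-(n+1)^{-1}\nabla^a f_a\,\omega^{n+1}$ (which you derive via the bidegree argument and the K\"ahler trace identity), Stokes' theorem, and the assertion that the boundary integral over $\{\rho=\epsilon\}$ contains no $\log\epsilon$ term. Your third step simply makes explicit, via the expansion of $\omega^n$ and the collar parametrization, what the paper states in one line.
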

\begin{proof}
	Since $d(f_adz^a\wedge\omega^n)=-(n+1)^{-1}\nabla^a f_a\,\omega^{n+1}$, we have
$$
\int_{\Omega^\epsilon}\nabla^a f_a\,\omega^{n+1}=-(n+1)\int_{\pa\Omega^\epsilon}f_a dz^a\wedge\omega^n.
$$
The right-hand side does not contain any $\log\epsilon$ term.
\end{proof}

Applying this lemma repeatedly, we have
\begin{align*}
\lp\int_{\Omega^\epsilon}&(\tensor{u}{^a^{\conj{b} }}\tensor{G}{_a}\tensor{G}{_{\conj{b} }}
	+2\re(\tensor{G}{^a}\tensor{u}{_a})-\tensor{G}{^a}\tensor{G}{_a}\tensor{u}{_b^b})\vol
\\
&=
\lp\int_{\Omega^\epsilon}
u\,(\nabla^a {}_b (\tensor{G}{_a}\tensor{G}{^b})
	-2\re(\tensor{G}{^a_a})
	-\nabla^b{}_b(\tensor{G}{^a}\tensor{G}{_a}))\vol.
\end{align*}
Since $G_{a\conj{b} c}=0$ and $\tensor{G}{_a^a}=-(n+1)$, the integrand of the last formula is reduced to
\begin{equation*}
\begin{split}
u\,&(\tensor{G}{_a^a}\tensor{G}{^b_b}+\tensor{G}{_a_b}\tensor{G}{^a^b}+2(n+1)
-\tensor{G}{_a_b}\tensor{G}{^a^b}-\tensor{G}{_a^b}\tensor{G}{^a_b})
\\
&=
u\,((n+1)^2+2(n+1)-(n+1))
\\
&=(n+1)(n+2)u.
\end{split}
\end{equation*}
Thus we conclude 
$$
\lp \intf=2(n+1)(n+2)\lp \int_{\Omega^\epsilon}u\vol.
$$

To compute $\ints$, as in \S\ref{proofthm1},
we use the identification between a neighborhood of $M$ with the product $M\times(-\epsilon_0,\epsilon_0)$
given by the flow of the normal vector field $N$.
Then $\pa\Omega^\epsilon$ is identified with $M\times\{\epsilon\}$, while $\pa\Omega_t^\epsilon$ is given by $\{(x,\psi(x,t,\epsilon)):x\in M\}$ for a smooth function $\psi$.
Fix a volume form $d\sigma$ on $M$ and write $\abs{dG}^2\vol=f(x,\rho)\,d\rho d\sigma$.
Then $\ints$ can be expressed by the derivative of an iterated integral:
\begin{equation*}
\begin{split}
	\ints&=\delta_0	\int_M\int_{\psi(x,t,\epsilon)}^{\epsilon}
	f(x,\rho)\,d\rho\,d\sigma+O(1)\\
&=
	-\int_M\delta_0\psi(x,t,\epsilon)
	f(x,\epsilon)\,d\sigma+O(1).
\end{split}
\end{equation*}
Since there is no $\log\epsilon$ term contained in the integrand, we have
$\lp \ints=0$.
This completes the proof of the proposition.

\subsection{Proof of Theorem \ref{variation_thm}: the first variation}
We apply Proposition \ref{dot_L_prop} to the case $\rho_t= r_t$,
where $r_t=\bfr_t/h^{1/(n+2)}$ is the Fefferman defining function of $\Omega_t$ associated to
a smooth family $\bfr_t$ and the flat metric $h$ of the canonical bundle $K_X$ in the assumption of
Theorem \ref{variation_thm}.
In view of \eqref{eq:variation_of_L_and_antegral_of_u}, we need the relation between 
$\dot G=\dot\bfr/\bfr$ and $\calO$.
We first compute the equation satisfied by $\dot\bfr$.

\begin{lem} \label{cor-dotr1}
	It holds that
	\begin{equation}\label{varphieq}
		\wt\Delta\dot\bfr=(n+2)\dot\bfr \calO\bfr^{n+1}+\dot\calO\bfr^{n+2}+O(\bfr^{2n+3}),
	\end{equation}
	where $\wt\Delta$ is the Laplacian of $\wt g_0$.
\end{lem}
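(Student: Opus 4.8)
The plan is to differentiate the exact Monge--Amp\`ere equation \eqref{MA-eq},
$$
(dd^c\bfr_t)^{n+2}=k_n\bigl(1+\calO_t\,\bfr_t^{n+2}\bigr)\wt\vol,
$$
in $t$ at $t=0$ and to convert the linearized left-hand side into the ambient Laplacian by means of the K\"ahler trace identity. I fix a smooth family of \emph{exact} solutions $\bfr_t$ as provided by Proposition \ref{MA-prop}, so that $\calO_t$ is determined by the equation and $\dot\calO=\delta_0\calO_t$ is well defined. A point worth recording at the outset is that the canonical ambient volume form $\wt\vol=i^{(n+2)^2}d\wt\zeta\wedge\conj{d\wt\zeta}$ is fixed throughout the deformation, since the complex manifold $X$, and hence $\wt X=K^*_X$, does not vary with $t$; only the hypersurfaces $\calN_t$ move. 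Thus $\wt\vol$ is $t$-independent and may be treated as a constant when differentiating.

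First I would differentiate the left-hand side. Writing $\omega=dd^c\bfr$ for the K\"ahler form of $\wt g=\wt g_0$, one has
$$
\frac{d}{dt}\Big|_{t=0}(dd^c\bfr_t)^{n+2}=(n+2)\,(dd^c\dot\bfr)\wedge\omega^{n+1}.
$$
The key step is the K\"ahler trace identity: for a function $F$ on the ambient space,
$$
(dd^c F)\wedge\omega^{n+1}=\frac{1}{n+2}\,(\wt\Delta F)\,\omega^{n+2},
$$
where $\wt\Delta=\wt g^{A\conj B}\wt\nabla_{A\conj B}$. Here I use that $\wt g$ is K\"ahler, so the mixed Christoffel symbols vanish and $\wt\nabla_{A\conj B}F=\pa_A\pa_{\conj B}F$ on functions; hence $\wt\Delta F=\wt g^{A\conj B}\pa_A\pa_{\conj B}F$ is exactly the $\omega$-trace of $dd^c F$, and the factors of $i$ appearing in $\omega$ and in $dd^c F$ cancel in the ratio. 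Applying this with $F=\dot\bfr$ and substituting $\omega^{n+2}=k_n(1+\calO\bfr^{n+2})\wt\vol$ renders the linearized left-hand side as $(\wt\Delta\dot\bfr)\,k_n(1+\calO\bfr^{n+2})\,\wt\vol$; note that the factor $(n+2)$ cancels against the $1/(n+2)$ in the trace identity.

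Next I would differentiate the right-hand side, using $\delta_0\wt\vol=0$, to get $k_n\bigl((n+2)\,\calO\,\bfr^{n+1}\dot\bfr+\dot\calO\,\bfr^{n+2}\bigr)\wt\vol$. Equating the two expressions and cancelling $k_n\wt\vol$ yields the exact identity
$$
(\wt\Delta\dot\bfr)\,(1+\calO\bfr^{n+2})=(n+2)\,\calO\,\bfr^{n+1}\dot\bfr+\dot\calO\,\bfr^{n+2}.
$$
Finally I would solve for $\wt\Delta\dot\bfr$ and expand $(1+\calO\bfr^{n+2})^{-1}=1+O(\bfr^{n+2})$, which is legitimate since $\calO\bfr^{n+2}\to0$ near $\calN$. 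Because $\calO\in\wt\calE(-n-2)$ and $\dot\bfr\in\wt\calE(1)$ are smooth, hence bounded near $\calN$, multiplying the right-hand side by the correction $O(\bfr^{n+2})$ produces a term of order $O(\bfr^{2n+3})$ from $(n+2)\calO\bfr^{n+1}\dot\bfr$ and a term of order $O(\bfr^{2n+4})$ from $\dot\calO\bfr^{n+2}$; the worse of these is $O(\bfr^{2n+3})$, which gives precisely \eqref{varphieq}.

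The computation is essentially routine once the trace identity is in place, so I do not anticipate a genuine obstacle. The only points needing care are (i) selecting a smooth family of exact solutions $\bfr_t$ so that $\dot\calO$ makes sense, and (ii) the bookkeeping of the geometric-series correction, where one must verify that it contributes no worse than $O(\bfr^{2n+3})$. The essential conceptual input is simply the reduction of the linearized complex Monge--Amp\`ere operator to the ambient Laplacian $\wt\Delta$ via the K\"ahler trace identity.
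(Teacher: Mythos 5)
Your proposal is correct and follows essentially the same route as the paper: differentiate the Monge--Amp\`ere equation $(dd^c\bfr_t)^{n+2}=k_n(1+\calO_t\bfr_t^{n+2})\wt\vol$ in $t$, identify the linearization of the Monge--Amp\`ere operator with $\wt\Delta\dot\bfr$ times the volume form, equate coefficients, and divide by $1+\calO\bfr^{n+2}$ to get the $O(\bfr^{2n+3})$ error. The paper expresses the linearization as the determinant expansion $(dd^c\bfr_t)^{n+2}=(dd^c\bfr)^{n+2}\bigl(1+t\wt\Delta\dot\bfr\bigr)+O(t^2)$, which is the same computation as your wedge-product-plus-trace-identity formulation.
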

\begin{proof}
We take
the first variation in $t$ of the Monge--Amp\`ere equation
\begin{equation}\label{MA-eq-t}
(dd^c\bfr_t)^{n+2}=(1+\calO_t\bfr_t^{n+2})\wt\vol.
\end{equation}
The left-hand side has expansion
\begin{align*}
(dd^c\bfr_t)^{n+2}&
=(dd^c\bfr)^{n+2}\big(1+t\wt\Delta\dot\bfr\big)+O(t^2)\\
&=(1+\calO\bfr^{n+2})\big(1+t\wt\Delta\dot\bfr\big)\wt\vol+O(t^2)
\end{align*}
while the expansion of the right-hand side gives
\begin{align*}
1+\calO_t\bfr_t^{n+2}&=1+(\calO+t\dot\calO)(\bfr+t\,\dot\bfr)^{n+2}+O(t^2)\\
&=1+\calO\bfr^{n+2}+
t\big(\dot\calO\bfr^{n+2}+(n+2)\dot\bfr \calO\bfr^{n+1}\big)+O(t^2).
\end{align*}
Thus comparing the coefficients of $t$, we have
\begin{equation*}
(1+\calO\bfr^{n+2})\wt\Delta\dot\bfr=(n+2)\dot\bfr \calO\bfr^{n+1}+\dot\calO\bfr^{n+2}
\end{equation*}
and the lemma follows.
\end{proof}

In particular, the lemma above gives
\begin{equation*}
\wt\Delta\dot\bfr=(n+2)\dot\bfr \calO\bfr^{n+1}+O(\bfr^{n+2}).
\end{equation*}
On the other hand, since $\bfr$, $\dot\bfr\in\wt\calE(1)$ implies
$\bfr^A\bfr_A=\bfr$ and $\bfr^A\dot\bfr_A=\dot\bfr$, we have
\begin{align*}
	\bfr\wt\Delta \dot{G}
&=\bfr\wt\Delta (\dot\bfr/\bfr)\\
&=\wt\Delta\dot\bfr-(n+2)\dot\bfr/\bfr-
(\bfr^A\dot\bfr_A
+\bfr_A\dot\bfr^A)/\bfr+2\bfr^A \bfr_A\dot\bfr/\bfr^2\\
&=\wt\Delta\dot\bfr-(n+2)\dot{G}.
\end{align*}
Thus we get the identity
$
(\bfr\wt\Delta+n+2)\dot G=\wt\Delta\dot\bfr
$
and hence
$$
(\bfr\wt\Delta+n+2)\dot G
=(n+2)\dot\bfr \calO\bfr^{n+1}+O(\bfr^{n+2})
$$
follows.

Let $\Delta=\tensor{\nabla}{_b^b}$ be the K\"ahler Laplacian of $g$.
Then \cite[Prop.\ 5.4]{GG} shows that
$
\Delta u=-\bfr\wt\Delta u
$
for any $u\in C^\infty(\Omega)$; here $u$ and $\Delta u$
are identified with a function in $\wE(0)$. 
It follows that
\begin{equation*}
	\label{eq:approx_linearized_equation}
	(-\Delta+n+2)\dot G=(n+2)\dot{\bfr}\mathcal{O}\bfr^{n+1}+O(r^{n+2}).
\end{equation*}
This equation enables us to express $\dot L$ in terms of $\calO$.
Since
\begin{equation*}
	(n+2) \dot G=\Delta \dot G+(n+2)\dot{\bfr}\mathcal{O}\bfr^{n+1}+O(r^{n+2}),
\end{equation*}
we have
\begin{equation}
	\label{eq:integral_of_u}
	(n+2)\int_{\Omega^\epsilon}\dot G\vol
	=\int_{\Omega^\epsilon}\Delta \dot G\vol+(n+2)\int_{\Omega^\epsilon}\dot{\bfr}\mathcal{O}\bfr^{n+1}\vol+O(1).
\end{equation}
By Lemma \ref{divergence-lem}, the first term on the right-hand side
does not contribute to the log term.
If ${\vartheta}=d^c r$, then the volume form of $g$ is given by
\begin{equation*}
	\vol=\frac{\omega^{n+1}}{(n+1)!}
	=-\frac{1}{n!}\frac{1}{r^{n+2}}dr\wedge{\vartheta}\wedge(d{\vartheta})^n
	+\frac{1}{(n+1)!}\frac{1}{r^{n+1}}(d\vartheta)^{n+1}.
\end{equation*}
Hence
\begin{equation*}
	\int_{\Omega^\epsilon}\dot{\bfr}\mathcal{O}\bfr^{n+1}\vol
	=\frac{1}{n!}\log\frac{1}{\epsilon}\int_M\dot{\bfr}\mathcal{O}\theta\wedge(d\theta)^n
	+O(1).
\end{equation*}
Combining this equality with \eqref{eq:variation_of_L_and_antegral_of_u} and \eqref{eq:integral_of_u},
we conclude
\begin{equation*}
	\dot L=-\frac{2(n+1)(n+2)}{n!}\int_M\dot{\bfr}\calO.
\end{equation*}
Thus, by using \eqref{eq:Qprime_expansion-intro}, we obtain \eqref{QPvar}.
	
\subsection{Proof of Theorem \ref{variation_thm}: the second variation}
\label{proof-second-var}
We take the variation of
\begin{equation}
	\label{derivative-of-total-Qprime}
	\delta_t\overline{Q}'_t=c_n\int_{M_t}\dot{\bfr_t}\calO_t,
\end{equation}
where $\dot{\bfr_t}=(d/ds)|_{s=t}\bfr_s$. 
For this, we pull-back the integrand to $M$ by the flow $\underline{\Phi}_t$
given by $F_t=-\dot{\bfr}_t$.

For $\varphi_t\in\wt\calE(-n-1)$ with a parameter $t$, we write
$$
\wt\varphi_t=\varphi_t\boldsymbol{\vartheta}_t\wedge (d\boldsymbol{\vartheta}_t)^n,
$$
where $\boldsymbol{\vartheta}_t=d^c\bfr_t$.
We define a $(2n+1)$-form on $M_t$ by $(\varphi_t)_{M_t}=\iota_t^*\zeta^*\wt\varphi_t$ for a smooth section $\zeta$ of $\wt X\to X$ and an embedding $\iota_t\colon M_t\to X$
(as we have seen in the end of \S\ref{sec-ambient-metric},
this definition is independent of the choice of $\zeta$).
Since Lemma \ref{hamiltonian-flow-lemma} implies $\Phi_t^*\wt{\varphi_t}=\wt{\Phi_t^*\varphi_t}$,
one has
$$
(\Phi_t^*\varphi_t)_M=\underline\Phi_t^*((\varphi_t)_{M_t}).
$$

If we take $\varphi_t=\dot{\bfr_t}\calO_t$,
then the integral on the right-hand side of \eqref{derivative-of-total-Qprime} is
the integral of $\underline{\Phi}_t^*((\varphi_t)_{M_t})$ over $M$.
Therefore, it suffices to compute the first variation of $\Phi_t^*(\dot \bfr_t\calO_t)$, regarded as a function
on the ambient space $\wt{X}$, along $\calN$. It is computed as follows:
$$
\delta_0\big((\dot \bfr_t\calO_t)\circ\Phi_t\big)
=\dot\bfr
\delta_0\big(\calO_t\circ\Phi_t\big)
+\calO
\delta_0\big(\dot\bfr_t\circ\Phi_t\big).
$$
We will show in the appendix that
$$
\delta_0\big(\calO_t\circ\Phi_t\big)=
k_nP_{n+3}
\dot\bfr
$$
for a CR invariant differential operator $P_{n+3}\colon\calE(1)\to\calE(-n-2)$.
On the other hand, we have
$$
\delta_0\big(\dot\bfr_t\circ\Phi_t\big)
=\ddot\bfr+Y\dot\bfr=\ddot\bfr-\dot\bfr_A\, \dot\bfr^A.
$$
Thus we conclude
$$
\delta_0\big((\calO_t\,\dot \bfr_t)\circ\Phi_t\big)
=k_n\dot\bfr
P_{n+3}
\dot\bfr
+\calO\big(\ddot\bfr-\dot\bfr_A\, \dot\bfr^A\big),
$$
which completes the proof of the theorem.

\section{Explicit formulas in dimension $5$}
\label{explicit-formula-section}

\subsection{Computation of the total $Q$-prime curvature}
We give an explicit formula of the total $Q$-prime curvature for $n=2$. Fix a pseudo-Einstein contact form $\theta$ and let $r=\bfr/h_\theta^{1/(n+2)}$ be the associated Fefferman defining function.
We define the sub-Laplacian by $\Delta_b=\ell^{\a\conj{\b}}\nabla_{\a\conj{\b}}$.
By \eqref{totalQ-sn}, the total $Q$-prime curvature is given by a constant multiple of the integral of $s^{(3)}=N^3s|_M$, so we will compute $s^{(3)}$ modulo divergence terms.
Recall from \cite{S} that the function $s$ satisfies the equation
\begin{equation*}
Ns=-3\kappa s,\qquad
s|_M=1,
\end{equation*}
where $\kappa$ is the transverse curvature of $r$.
Thus, setting $\kappa^{(k)}=N^k\kappa|_M$, we have
\begin{equation}\label{s3}
s^{(3)}=-3\kappa^{(2)}+27\kappa^{(0)}\kappa^{(1)}-27(\kappa^{(0)})^3.
\end{equation}
We compute $\kappa^{(j)}$ by employing the methods of \cite{BE2} and \cite{GL}.
Let $\psi_a{}^b$ and $\Psi_a{}^b$ be the connection and the curvature of the metric $g$ given by $dd^c\log r$ near $M$.  Subtracting explicitly the given singular parts from them, we define the smooth parts by
\begin{equation*}
\theta_a{}^b =\psi_a{}^b +Y_a{}^b, \quad W_a{}^b =\Psi_a{}^b +K_a{}^b,
\end{equation*}
where 
\begin{equation*}
Y_a{}^b =\frac{1}{r}\bigr( \d_a{}^b r_c+{\d_c}^b r_a \bigr)\theta^c,\quad  
K_a{}^b =\big( \d_a {}^b g_{c\conj{d}}+{\d_{c}}^b g_{a\conj{d}}\bigr) \theta^{c}\wedge\theta^{\conj{d}}.
\end{equation*}
We can express $\theta_a{}^b$ and  $W_a{}^b$ in terms of  the Graham--Lee connection $\varphi_\a{}^\b$ of $r$ as  follows:

\begin{prop}[\cite{Marugame}]\label{GL_BE_prop}
Let $\mu=(1+\kappa r)^{-1}$.
With respect to an admissible coframe $\{\theta^{\a}, \theta^{\xi}=\pa r\}$, 
we have
\begin{align*}
{\theta_{\a}}^{\b}&={\varphi_{\a}}^{\b}+i\kappa\vartheta{\d_{\a}}^{\b}, &
{\theta_{\xi}}^{\b}&=-\kappa\theta^{\b}+i{A^{\b}}_{\ol{\g}}\theta^{\ol{\g}}-\kappa^{\b}\ol{\pa}r, \\
{\theta_{\a}}^{\xi}&=\ell_{\a\ol{\g}}\theta^{\ol{\g}}+r\mu\kappa_{\a}\pa r+{ir\mu}A_{\a\g}\theta^{\g}, &
{\theta_{\xi}}^{\xi}&=\kappa\ol{\pa}r-r \kappa^{2}\mu\pa r +{r\mu}\pa\kappa.
\end{align*} 
\end{prop}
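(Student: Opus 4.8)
The plan is to compute the Chern connection $\psi_a{}^b$ of the K\"ahler metric $g$ with K\"ahler form $\omega=dd^c\log r$ directly in the Graham--Lee coframe $\{\theta^\a,\theta^\xi=\pa r\}$, and then to recover the smooth part via $\theta_a{}^b=\psi_a{}^b+Y_a{}^b$. First I would record the metric in this coframe. Using $dd^c\log r=-i\pa\conj\pa\log r$ together with
$\pa\conj\pa r=i\,d\vartheta=-\ell_{\ab}\theta^\a\wedge\theta^{\conj\b}-\kappa\,\pa r\wedge\conj\pa r$,
which follows from the structure equation for $d\vartheta$, one obtains
\begin{equation*}
\omega=i\,\frac{\ell_{\ab}}{r}\,\theta^\a\wedge\theta^{\conj\b}
+i\,\frac{1+\kappa r}{r^2}\,\pa r\wedge\conj\pa r,
\end{equation*}
so that $g_{\ab}=r^{-1}\ell_{\ab}$ and $g_{\xi\conj\xi}=r^{-2}\mu^{-1}$, the mixed components vanishing. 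The singular term is equally explicit: since $Z_\a r=0$ and $\xi r=1$, the components of $Y_a{}^b=r^{-1}(\d_a{}^b r_c+\d_c{}^b r_a)\theta^c$ reduce to $Y_\a{}^\b=r^{-1}\d_\a{}^\b\,\pa r$, $Y_\xi{}^\b=r^{-1}\theta^\b$, $Y_\a{}^\xi=0$, and $Y_\xi{}^\xi=2r^{-1}\pa r$.

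Since $g$ is K\"ahler, its Chern connection is torsion free, so $\psi_a{}^b$ is pinned down by the pair of conditions $d\theta^a=-\psi_b{}^a\wedge\theta^b$ and the metric compatibility $dg_{a\conj b}=\psi_a{}^c g_{c\conj b}+g_{a\conj c}\,\conj{\psi_b{}^c}$. The required structure equations are the Graham--Lee equation \eqref{GLstr} for $d\theta^\a$ (with $\rho=r$) and
\begin{equation*}
d\theta^\xi=d(\pa r)=-\pa\conj\pa r
=\ell_{\ab}\theta^\a\wedge\theta^{\conj\b}+\kappa\,\pa r\wedge\conj\pa r.
\end{equation*}
Decomposing each of these by bidegree fixes, via its $(1,1)$-part, the $(0,1)$-component of $\psi_a{}^b$, and via its $(2,0)$-part the piece of the $(1,0)$-component antisymmetric in the lowered indices; the remaining symmetric piece is supplied by metric compatibility. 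The factor $\mu$ enters exactly when an index is raised through $g^{\xi\conj\xi}=r^2\mu$, and $dg_{\xi\conj\xi}$ produces the $\pa\kappa$ and $\kappa_\a$ contributions. Subtracting the forms $Y_a{}^b$ and simplifying then yields the four asserted expressions.

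A convenient way to organize, and to check, the computation is to verify that the proposed forms reproduce the structure equations. Substituting $\theta_\a{}^\b=\varphi_\a{}^\b+i\kappa\vartheta\,\d_\a{}^\b$ and the claimed $\theta_\xi{}^\b$ into $d\theta^\a=-\psi_b{}^\a\wedge\theta^b$ recovers precisely the four terms of \eqref{GLstr} (the $\varphi$, torsion, $\kappa^\a$, and $-\tfrac12\kappa\,dr$ contributions), while substituting $\theta_\a{}^\xi$ and $\theta_\xi{}^\xi$ into $d\theta^\xi=-\psi_b{}^\xi\wedge\theta^b$ returns $\ell_{\ab}\theta^\a\wedge\theta^{\conj\b}+\kappa\,\pa r\wedge\conj\pa r$; here the terms involving $\pa r\wedge\pa r$ drop out and the $r\mu\,\pa\kappa$ and $r\mu\kappa_\a$ contributions cancel against one another.

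I expect the transverse components $\theta_\a{}^\xi$ and $\theta_\xi{}^\xi$ to be the main obstacle. There the non-closedness of $\theta^\xi=\pa r$, the dependence of $g_{\xi\conj\xi}$ on the transverse curvature $\kappa$, and the need to carry the derivatives $\kappa_\a=Z_\a\kappa$ and $\xi\kappa$ all interact: the $\mu$-weighted terms $r\mu\kappa_\a\,\pa r$, $ir\mu A_{\a\g}\theta^\g$, and $-r\kappa^2\mu\,\pa r+r\mu\,\pa\kappa$ emerge only after combining $dg_{\xi\conj\xi}$ with the $(2,0)$-part of the structure equation and cancelling the singular $r^{-1}$ and $r^{-2}$ pieces against $Y_a{}^b$. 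By contrast the tangential block $\theta_\a{}^\b=\varphi_\a{}^\b+i\kappa\vartheta\,\d_\a{}^\b$ follows almost immediately from \eqref{GLstr} and $g_{\ab}=r^{-1}\ell_{\ab}$ once the scalar trace term $i\kappa\vartheta$ is isolated.
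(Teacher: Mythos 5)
The paper contains no proof of this proposition at all---it is imported verbatim from \cite{Marugame}---so there is nothing internal to compare against and your proposal must stand on its own; it essentially does. Every preliminary you record is correct: with the paper's convention $dd^c=-i\pa\conj\pa$ and with $\pa\conj\pa r=-\ell_{\ab}\theta^\a\wedge\theta^{\conj\b}-\kappa\,\pa r\wedge\conj\pa r$, the metric has components $g_{\ab}=r^{-1}\ell_{\ab}$ and $g_{\xi\conj\xi}=r^{-2}\mu^{-1}$ with vanishing mixed block; since $Z_\a r=0$ and $\xi r=1$, the forms $Y_a{}^b$ are exactly as you list; and $d\theta^\xi=-\pa\conj\pa r$ as claimed. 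The cancellations you predict in the torsion check are also what actually happens (the $ir\mu A_{\b\g}\theta^\b\wedge\theta^\g$ term dies by symmetry of $A$, the $r\mu\,\pa\kappa$ term kills the $r\mu\kappa_\b\,\theta^\b\wedge\pa r$ term, and the singular $r^{-1}$ pieces of $\psi_\a{}^\b$ and $\psi_\xi{}^\b$ cancel each other). One point, however, must be made explicit before your third paragraph can serve as the proof: the two torsion equations $d\theta^\a=-\psi_b{}^\a\wedge\theta^b$ and $d\theta^\xi=-\psi_b{}^\xi\wedge\theta^b$ do \emph{not} characterize the connection. They determine only the $(0,1)$-part of $\psi_b{}^a$ and the part of its $(1,0)$-component antisymmetric in the two lower indices; the symmetric part is invisible to them, so a family of connections passes this check. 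To close the argument you must also verify metric compatibility $dg_{a\conj b}=\psi_a{}^c g_{c\conj b}+g_{a\conj c}\,\conj{\psi_b{}^c}$ in all three index blocks, which indeed holds: in the $\a\conj\b$ block the $i\kappa\vartheta$ terms cancel and the identity reduces to the Graham--Lee relation $\varphi_{\a\conj\b}+\conj{\varphi_{\b\conj\a}}=d\ell_{\ab}$; in the mixed block the $r^{-1}\kappa_\a\,\pa r$ and $ir^{-1}A_{\a\g}\theta^\g$ contributions cancel pairwise and the remainder collapses using $r^{-2}\mu^{-1}=r^{-2}+\kappa r^{-1}$; in the $\xi\conj\xi$ block one uses $g_{\xi\conj\xi}\,r\mu=r^{-1}$. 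With that supplement (or, equivalently, by running the Koszul-type derivation of your second paragraph to the end, which uses both conditions and therefore yields uniqueness automatically), your argument is a complete and correct proof, of the same direct-computation type as the one in \cite{Marugame}.
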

Let $\Th_a{}^b=d\theta_a{}^b-\theta_a{}^{c}\wedge\theta_{c}{}^b$.
Then one can also show that $\tensor{\Th}{_a^a}=\tensor{W}{_a^a}$ near $M$ (see~\cite{Marugame}).
Since $r$ is an approximate solution to the Monge--Amp\`{e}re equation, 
\begin{equation}\label{AEE}
\begin{aligned}
\Th_a{ }^a &=W_a{ }^a =\Psi_a{ }^a +K_a{ }^a  \\
 &=\Ric (g)+4g=-\pa\ol{\pa}\log \calJ[r]=\pa\ol{\pa} O(r^{4}). 
\end{aligned}
\end{equation}
On the other hand, by Proposition \ref{GL_BE_prop}, we obtain
\begin{align*}
d{\theta_{\g}}^{\g}&=\Omega_\g{}^\g+2id\kappa\wedge\vartheta +2i\kappa d\vartheta ,\\
d{\theta_{\xi}}^{\xi}&=d\kappa\wedge\ol{\pa}r+\kappa\pa\ol{\pa}r+{\kappa^2}\mu\pa r\wedge\ol{\pa}r -2\kappa r\mu d\kappa\wedge\pa r \\
&\quad+{\kappa^2 r}{\mu^{2}}d(\kappa r)\wedge\pa r+{r\kappa^2\mu}\pa\ol{\pa}r+\mu dr\wedge\pa\kappa \\
&\quad-r{\mu^2}d(\kappa r)\wedge\pa\kappa-{r}\mu\pa\ol{\pa}\kappa,
\end{align*}
where $\tensor{\Omega}{_\alpha^\beta}$ is the curvature of the Graham--Lee connection.
By \cite{GL},
\begin{align*}
\Omega_\g{}^\g&=\Ric_{\g\ol{\mu}}\theta^{\g}\wedge\theta^{\ol{\mu}}-i{A_{\g\b,}}^{\b}\theta^{\g}\wedge\ol{\pa}r -i{A_{\ol{\g}\ol{\b},}}^{\ol{\b}}\theta^{\ol{\g}}\wedge\pa r \\
&\quad-2dr\wedge\bigl(\kappa_{\g}\theta^{\g}-\kappa_{\ol{\g}}\theta^{\ol{\g}}\bigr)-\frac{1}{2}\bigl(\Delta_{b}\kappa+2|A|^2\bigr)\pa r\wedge\ol{\pa}r.
\end{align*}
Thus comparing the coefficients of $\pa r\wedge\ol{\pa}r$ and $\theta^{\g}\wedge\theta^{\ol{\mu}}$
in \eqref{AEE}, we have
\begin{multline}\label{aee1}
2\kappa_{N}-{r}\mu\kappa_{\xi\ol{\xi}}-\frac{1}{2}(\Delta_{b}\kappa +2|A|^2)-2\kappa^2-2\kappa^3 r\mu-{\kappa^2 r^2}{\mu^2}\kappa_{\ol{\xi}} \\
-{\kappa^3 r}{\mu^2}+{r^2}{\mu^2}\kappa_{\xi}\kappa_{\ol{\xi}}+{\kappa r}{\mu^2}\kappa_{\xi}+4\kappa r\mu\kappa_{N}-{r}{\mu}\kappa_{\g}\kappa^{\g}=O(r^2),
\end{multline}
\begin{equation}\label{aee2}
\Ric_{\g\ol{\mu}}-
(3\kappa-r\mu\kappa_{\xi}
+r \kappa^2\mu) \ell_{\g\ol{\mu}}
+r^2\mu^2\kappa_{\g}\kappa_{\ol{\mu}}
-r\mu\kappa_{\g\ol{\mu}}=O(r^{4}).
\end{equation}
From the contraction of \eqref{aee2}, we have  $\kappa^{(0)}=\frac{1}{6}\Scal$. 
Then setting $r=0$ in \eqref{aee1} gives
$$
\kappa^{(1)}=\frac{1}{24}\Delta_{b}\Scal+\frac{1}{36}\Scal^2+\frac{1}{2}|A|^2.
$$ 
Next, we differentiate \eqref{aee1} in the $N$-direction and set $r=0$ to obtain 
\begin{equation}\label{kappa2}
2\kappa^{(2)}-\kappa_{\xi\ol{\xi}}-\frac{1}{2}(\Delta_b \kappa)_N-(|A|^2)_N-3\kappa^3+\kappa
\kappa_{\xi}-\kappa_{\g}\kappa^{\g}=0.
\end{equation}
 In the calculation of each term in \eqref{kappa2}, we need some commutation relations and Bianchi identities for the Graham--Lee connection:
\begin{prop}
The second covariant derivatives of a function $f$ and a tensor $t_\alpha$ satisfy the following commutation relations:
\begin{align}
f_{TN}-f_{NT}&=i(f_{\g} \kappa^\g-f_{\ol{\g}}\kappa^{\ol{\g}})+\kappa f_{T}, \label{commu1}\\
t_{\a,\ol{\b}N}-t_{\a,N\ol{\b}}&=-\frac{i}{2}t_{\a,\g}{A^{\g}}_{\ol{\b}}-\frac{i}{2}t_{\g} 
 {A^{\g}}_{\ol{\b},\a}+\frac{\kappa}{2}t_{\a,\ol{\b}}-\frac{1}{2}t_{\a}\kappa_{\ol{\b}}-t_{\g}\kappa^{\g}\ell_{\a\ol{\b}}. \label{commu2}
\end{align}
\end{prop}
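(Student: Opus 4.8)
Both identities are Ricci (commutation) identities for the Graham--Lee connection $\nabla$ of Proposition~\ref{GLcurv}, so the plan is to read off its torsion and curvature from the structure equations \eqref{GLstr} and then invoke the general commutation rule. In the index convention of \eqref{commf} (the left index differentiated first) one has, for a function, $f_{XY}-f_{YX}=T^\nabla(X,Y)f$ with $T^\nabla$ the torsion of $\nabla$, and for a one-form the additional curvature term $t_{\alpha,XY}-t_{\alpha,YX}=-(R^\nabla(X,Y)t)_\alpha+(\nabla_{T^\nabla(X,Y)}t)_\alpha$. Since \eqref{GLstr} is written in the holomorphic frame $Z_\alpha,\xi$, I would first use \eqref{def-xi} to write $N=\tfrac12(\xi+\ol\xi)$ and $T=-i(\xi-\ol\xi)$, reducing $\nabla_N$ and $\nabla_T$ to $\nabla_\xi$ and $\nabla_{\ol\xi}$.

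The torsion is immediate from \eqref{GLstr}: as $\nabla$ preserves $\xi$ and $\ker\pa\rho$, the only connection form is $\varphi_\alpha{}^\beta$, so $\Theta^\alpha=d\theta^\alpha-\theta^\beta\wedge\varphi_\beta{}^\alpha$ equals the last three terms of \eqref{GLstr}, while $\Theta^\xi=d(\pa\rho)=\ol\pa\pa\rho$ together with the conjugate two-forms completes the list. Reading off the coefficients of $\theta^\xi\wedge\theta^{\ol\xi}$, $\theta^\xi\wedge\theta^{\ol\beta}$, and $\theta^{\ol\xi}\wedge\theta^{\ol\beta}$ gives $T^\nabla(\xi,\ol\xi)=-\kappa^\gamma Z_\gamma+\kappa^{\ol\gamma}Z_{\ol\gamma}+i\kappa T$, $T^\nabla(Z_{\ol\beta},\xi)=-iA^\gamma{}_{\ol\beta}Z_\gamma+\tfrac12\kappa Z_{\ol\beta}$, and $T^\nabla(Z_{\ol\beta},\ol\xi)=\tfrac12\kappa Z_{\ol\beta}$. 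Feeding $T^\nabla(T,N)=-iT^\nabla(\xi,\ol\xi)=i\kappa^\gamma Z_\gamma-i\kappa^{\ol\gamma}Z_{\ol\gamma}+\kappa T$ into the function identity yields \eqref{commu1} at once. For \eqref{commu2}, the torsion $T^\nabla(Z_{\ol\beta},N)=-\tfrac{i}2A^\gamma{}_{\ol\beta}Z_\gamma+\tfrac{\kappa}2 Z_{\ol\beta}$ contributes, through $(\nabla_{T^\nabla(Z_{\ol\beta},N)}t)_\alpha$, exactly the terms $-\tfrac{i}2 t_{\alpha,\gamma}A^\gamma{}_{\ol\beta}$ and $\tfrac{\kappa}2 t_{\alpha,\ol\beta}$.

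It remains to compute the curvature term $-(R^\nabla(Z_{\ol\beta},N)t)_\alpha=\tfrac12\big(\Omega_\alpha{}^\delta(Z_{\ol\beta},\xi)+\Omega_\alpha{}^\delta(Z_{\ol\beta},\ol\xi)\big)t_\delta$; since $\nabla$ preserves $\ker\pa\rho$, only the curvature $\Omega_\alpha{}^\beta=d\varphi_\alpha{}^\beta-\varphi_\alpha{}^\gamma\wedge\varphi_\gamma{}^\beta$ of $\nabla$ on $\ker\pa\rho$ enters. I would extract the needed transverse components---the coefficients of $\theta^{\ol\beta}\wedge\pa\rho$ and $\theta^{\ol\beta}\wedge\ol\pa\rho$ in $\Omega_\alpha{}^\beta$---from the curvature formula of \cite{GL}, or equivalently by applying the first Bianchi identity $\theta^\beta\wedge\Omega_\beta{}^\alpha=d\Theta^\alpha+\Theta^\beta\wedge\varphi_\beta{}^\alpha$ to the torsion two-forms already found. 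These should contribute precisely $-\tfrac{i}2 A^\delta{}_{\ol\beta,\alpha}$, $-\tfrac12\kappa_{\ol\beta}\delta_\alpha^\delta$, and $-\kappa^\delta\ell_{\alpha\ol\beta}$, namely the remaining three terms $-\tfrac{i}2 t_\gamma A^\gamma{}_{\ol\beta,\alpha}$, $-\tfrac12 t_\alpha\kappa_{\ol\beta}$, and $-t_\gamma\kappa^\gamma\ell_{\alpha\ol\beta}$ of \eqref{commu2}.

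The main obstacle is this last step: isolating the transverse components of the Graham--Lee curvature and separating the leafwise Tanaka--Webster curvature from the genuine $\kappa$- and $A$-dependent corrections introduced by the transverse direction, where a covariant derivative $A^\delta{}_{\ol\beta,\alpha}$ of the torsion appears. Careful bookkeeping of the signs coming from \eqref{def-xi} and from the conjugation $\Theta^{\ol\alpha}=\ol{\Theta^\alpha}$ will be essential, since a single misplaced factor of $i$ or $\tfrac12$ would destroy the delicate matching. The function identity \eqref{commu1}, which follows from the same conventions but requires no curvature input, therefore serves as a reliable consistency check on those conventions before attacking \eqref{commu2}.
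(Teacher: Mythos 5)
Your proposal is correct and is essentially the paper's own argument in invariant dress: the paper likewise works in a frame adapted to \eqref{GLstr} with $\varphi_\alpha{}^\beta$ vanishing at a point and reads \eqref{commu1} and \eqref{commu2} off the coefficients of $\theta\wedge dr$ and $\theta^{\ol\beta}\wedge dr$ in $d^2f=0$ and $d^2t_\alpha=0$, which is exactly your torsion/curvature bookkeeping (the curvature entering there through the term $t_\gamma\, d\varphi_\alpha{}^\gamma$). All your intermediate formulas check out: the torsion components you list are correct, and the first Bianchi identity applied to them does produce the transverse curvature coefficients $-\tfrac{i}{2}A^\delta{}_{\ol\beta,\alpha}$, $-\tfrac12\kappa_{\ol\beta}\delta_\alpha{}^\delta$, $-\kappa^\delta\ell_{\alpha\ol\beta}$, so your plan closes to give \eqref{commu2}.
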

\begin{proof}
We take a local frame for which the connection forms ${\varphi_{\a}}^{\b}$ vanish at a point, and differentiate both sides of 
$$
df=f_N dr+f_T\theta+f_\g \theta^{\g}+f_{\ol{\g}}\theta^{\ol{\g}}.
$$
Comparing the coefficients of $\theta\wedge dr$, we obtain \eqref{commu1}.
Similarly we differentiate both sides of 
$$
dt_{\a}=t_{\a,N}dr+t_{\a,T}\theta+t_{\a,\g}\theta^\g+t_{\a,\ol{\g}}\theta^{\ol{\g}}+t_\g\varphi_\a{}^\g
$$
and compare the coefficients of $\theta^{\ol{\b}}\wedge dr$ to obtain \eqref{commu2}.
\end{proof}
\begin{prop}
Let $n=2$ and let $\theta$ be a pseudo-Einstein contact form. Then
\begin{align}
A_{\a\b,N}&=\kappa A_{\alpha\beta}-i\kappa_{\a\b}-\frac{i}{2}A_{\a\b,T}, \label{BianchiAN} \\
i A_{\a\b,T}A^{\a\b}&=\frac{1}{8}\Scal\Delta_b\Scal+|\nabla A|^2+R_{\a\ol{\b}\g\ol{\d}}A^{\a\g}A^{\ol{\b}\ol{\d}}-\frac{1}{2}|A|^2\Scal+({\rm div}), \label{BianchiScal}
\end{align}
where $|\nabla A|^2={A_{\a\b,}}^{\g}{A^{\a\b}}_{,\g}$ and ${\rm (div)}$ stands for divergence terms.
\end{prop}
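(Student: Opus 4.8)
The plan is to derive both identities from the structure equations of the Graham--Lee connection together with the Tanaka--Webster Bianchi and commutation identities, invoking the pseudo-Einstein hypothesis only where Ricci curvature must be traded for scalar curvature. Throughout I work on $M=\{r=0\}$ and drop divergence terms freely, since they integrate to zero against $\theta\wedge(d\theta)^2$. For $n=2$, Proposition \ref{psudo-Einstein-prop} gives $\Ric_{\a\ol\b}=\tfrac12\Scal\,\ell_{\a\ol\b}$, while the identity $(\tf\Ric)_{\a\ol\b,}{}^{\ol\b}=-i(n-1)S_\a$ from the proof of that proposition forces $S_\a=0$; combined with $S_\a=\tfrac{i}{n}\Scal_\a+A_{\a\g,}{}^\g$ from Lemma \ref{curvature-lem2} this reads
$$
A_{\a\g,}{}^\g=-\tfrac{i}{2}\Scal_\a .
$$
This relation, the total symmetry of $A_{\a\b,\g}$ coming from \eqref{BianchiA}, and $\Ric_{\a\ol\b}=\tfrac12\Scal\,\ell_{\a\ol\b}$ are the three structural inputs I would use.

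For \eqref{BianchiAN} I would differentiate the Graham--Lee structure equation \eqref{GLstr} and use $d^2\theta^\a=0$. Reading off the coefficient of the three-form built from $\pa r$ and $\theta^{\ol\b}$, the torsion term $iA^\a{}_{\ol\b}\pa r\wedge\theta^{\ol\b}$ produces the holomorphic transverse derivative of $A$, the term $-\tfrac12\kappa\,dr\wedge\theta^\a$ produces $\kappa A_{\a\b}$, and the $\kappa$-gradient term $-\ell^{\a\ol\b}(Z_{\ol\b}\kappa)\pa r\wedge\ol\pa r$ produces the Hessian $\kappa_{\a\b}$. The outcome is most cleanly packaged as $A_{\a\b,\xi}=\kappa A_{\a\b}-i\kappa_{\a\b}$ for the $\xi$-direction derivative; writing $\xi=N+\tfrac{i}{2}T$ and using $\nabla_\xi=\nabla_N+\tfrac{i}{2}\nabla_T$ then gives \eqref{BianchiAN} at once. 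This step uses only the Graham--Lee connection of Proposition \ref{GLcurv}, and not the pseudo-Einstein condition.

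For \eqref{BianchiScal} the starting point is the Tanaka--Webster commutator for the symmetric tensor $A_{\a\b}$. Tracing the mixed second derivatives (the analogue of \eqref{commf} carrying the curvature terms for tensors, cf.\ \cite{Lee}) gives, for $n=2$ and after using $\Ric_{\a\ol\b}=\tfrac12\Scal\,\ell_{\a\ol\b}$,
$$
A_{\a\b,\g}{}^\g-A_{\a\b,}{}^\g{}_\g=2i\,A_{\a\b,T}+\Scal\,A_{\a\b}.
$$
Contracting with $A^{\a\b}$ and integrating by parts twice turns the left-hand side into $|\nabla A|^2$ minus the corresponding holomorphic-derivative norm; this Weitzenb\"ock-type identity expresses $iA_{\a\b,T}A^{\a\b}$ through $|\nabla A|^2$ and the defect between the two norms. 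To evaluate that defect I would rewrite the holomorphic-derivative norm using the total symmetry of $A_{\a\b,\g}$: after one further integration by parts the factor $\nabla_\a A^{\a\b}=\tfrac{i}{2}\Scal^\b$ appears, and commuting the remaining derivative past $A$ releases the full Tanaka--Webster curvature, producing $R_{\a\ol\b\g\ol\d}A^{\a\g}A^{\ol\b\ol\d}$ together with further Ricci contractions that collapse to $\Scal|A|^2$. Finally the trace $A_{\a\g,}{}^\g=-\tfrac{i}{2}\Scal_\a$ converts the remaining divergence of $\nabla A$ into $\Scal_\a\Scal^\a$, and $\Scal_\a\Scal^\a\equiv-\Scal\Delta_b\Scal$ modulo divergence yields the $\tfrac18\Scal\Delta_b\Scal$ term.

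The main obstacle is the bookkeeping in the second identity. One must keep careful track of the distinction between the holomorphic derivative $A_{\a\b,\g}$, which is totally symmetric and trace-free, and the antiholomorphic derivative $A_{\a\b,\ol\d}$, whose contraction $A_{\a\g,}{}^\g=-\tfrac{i}{2}\Scal_\a$ is the sole source of the scalar-curvature terms, and one must collect correctly the curvature contributions that arise each time a holomorphic and an antiholomorphic derivative are commuted. Pinning down the exact numerical coefficients---in particular the $\tfrac18$ in front of $\Scal\Delta_b\Scal$ and the cancellations that leave precisely $-\tfrac12|A|^2\Scal$---requires consistent sign and normalization conventions for every commutator and every integration by parts, and this is where the bulk of the routine but error-prone computation lies.
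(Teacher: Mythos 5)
Your proposal is correct and follows essentially the paper's own route: \eqref{BianchiAN} is obtained exactly as in the paper by differentiating the Graham--Lee structure equation \eqref{GLstr} and reading off coefficients, and \eqref{BianchiScal} is obtained from the pseudo-Einstein divergence identity $A_{\a\g,}{}^\g=-\tfrac{i}{2}\Scal_\a$ (the paper's \eqref{Bianchi-divA}) together with the total symmetry of $A_{\a\b,\g}$ and the holomorphic/antiholomorphic commutation relations---which the paper simply quotes in packaged form as Lee's identity $A_{\a\b,\ol{\g}\d}-A_{\a\d,\ol{\g}\b}=i\ell_{\b\ol{\g}}A_{\a\d,T}-i\ell_{\d\ol{\g}}A_{\a\b,T}+R_{\a\ol{\g}\b\ol{\mu}}A^{\ol{\mu}}{}_{\d}-R_{\a\ol{\g}\d\ol{\mu}}A^{\ol{\mu}}{}_{\b}$---followed by the same contraction with $A^{\a\b}$ and integrations by parts. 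One detail your sketch under-reports: when you commute derivatives to evaluate the holomorphic-derivative norm, the $i\ell\,\nabla_T$ part of the commutator regenerates a multiple of $iA_{\a\b,T}A^{\a\b}$ itself, so the target quantity appears on both sides and must be solved for; this is precisely the cancellation $iA_{\a\b,T}-2iA_{\a\b,T}=-iA_{\a\b,T}$ in the paper's chain, and omitting it would spoil the coefficients of $|\nabla A|^2$ and $\Scal\Delta_b\Scal$.
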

\begin{proof}
We differentiate the structure equation \eqref{GLstr} in a frame where the connection forms vanish at 
a point. Then we obtain the complex conjugate of \eqref{BianchiAN} by comparing the coefficients of $\theta\wedge dr\wedge\theta^{\ol{\g}}$. To prove \eqref{BianchiScal}, we use the following Bianchi identities in \cite{Lee}:
\begin{align}
2i{A_{\a\b,}}^{\b}&=\Scal_{\a}, \label{Bianchi-divA} \\
A_{\a\b,\ol{\g}\d}-A_{\a\d,\ol{\g}\b}&=i\ell_{\b\ol{\g}}A_{\a\d,T}-i\ell_{\d\ol{\g}}A_{\a\b,T}
+R_{\a\ol{\g}\b\ol{\mu}}{A^{\ol{\mu}}}_{\d}-R_{\a\ol{\g}\d\ol{\mu}}{A^{\ol{\mu}}}_{\b}.
\end{align}
From these formulas we have 
\begin{align*}
-\Scal\Delta_b\Scal-8|\nabla A|^2&=8({A_{\a\g,}}^{\g}{A^{\a\b}}_{,\b}-{A_{\a\b,}}^{\g}{A^{\a\b}}_{,\g})+({\rm div}) \\
&=8({{A_{\a\b,}}^{\g}}_{\g}-{{A_{\a\g,}}^{\g}}_{\b})A^{\a\b}+({\rm div}) \\
&=8(iA_{\a\b,T}-2iA_{\a\b,T}+{{{R_{\a}}^{\g}}}_{\b\ol{\d}}{A^{\ol{\d}}}_{\g}-{\rm Ric}_{\a\ol{\d}}{A^{\ol{\d}}}_{\b})A^{\a\b} \\
&\quad+({\rm div}) \\
&=-8iA_{\a\b,T}A^{\a\b}+8R_{\a\ol{\b}\g\ol{\d}}A^{\a\g}A^{\ol{\b}\ol{\d}}-4|A|^2\Scal+({\rm div}).
\end{align*}
Thus we obtain \eqref{BianchiScal}.
\end{proof}
Noting that $\xi=N+(i/2)T$ and using the commutation relation \eqref{commu1},
we compute $\kappa_{\xi\ol{\xi}}$ as
\begin{align*}
\kappa_{\xi\ol{\xi}}&=\kappa^{(2)}+\frac{i}{2}(\kappa_{TN}-\kappa_{NT})+\frac{1}{4}\kappa_{TT}
                                 =\kappa^{(2)}+\frac{i}{4}(\kappa^2)_T+\frac{1}{4}\kappa_{TT}
                                 =\kappa^{(2)}+({\rm div}).
\end{align*} 
By using the commutation relation \eqref{commu2}, we compute as
\begin{align*}
{{\kappa_\g}^{\g}}_N&={\kappa_{\g N}}^{\g}-\frac{i}{2}(\kappa_{\g,\mu}A^{\mu\g}+\kappa_\g {A^{\g\mu}}_{,\mu})+\frac{1}{2}\kappa{\kappa_\g}^{\g}-\frac{1}{2}\kappa_{\g}\kappa^{\g}-2\kappa_{\g}\kappa^{\g} \\
&=3\kappa{\kappa_\g}^{\g}+({\rm div}).
\end{align*}
Hence we have
\begin{align*}
(\Delta_b \kappa)_N&={{\kappa_\g}^{\g}}_N+{{\kappa_{\ol{\g}}}^{\ol{\g}}}_N
                          =3\kappa\Delta_b \kappa+({\rm div})
                          =\frac{1}{12}\Scal\Delta_b \Scal+({\rm div}).
\end{align*}
By \eqref{BianchiAN}, we have
\begin{align*}
(|A|^2)_N&=A_{\a\b,N}A^{\a\b}+A_{\ol{\a}\ol{\b},N}A^{\ol{\a}\ol{\b}} \\
&=2\kappa|A|^2-i(\kappa_{\a\b}A^{\a\b}-\kappa_{\ol{\a}\ol{\b}}A^{\ol{\a}\ol{\b}}) 
-\frac{i}{2}(A_{\a\b,T}A^{\a\b}-A_{\ol{\a}\ol{\b},T}A^{\ol{\a}\ol{\b}})\\
&=2\kappa|A|^2-2\kappa{\rm Im}{A_{\a\b,}}^{\a\b}-iA_{\a\b,T}A^{\a\b}+({\rm div}) \\
&=2\kappa|A|^2+\frac{1}{12}\Scal\Delta_b\Scal-iA_{\a\b,T}A^{\a\b}+({\rm div}).
\end{align*}
In the last equality, we have also used
$\im\tensor{A}{_\alpha_\beta_,^\alpha^\beta}=-(1/4)\Delta_b\Scal$, which follows from \eqref{Bianchi-divA}.
Finally, the last two terms in \eqref{kappa2} are computed as
\begin{align*}
	\kappa\kappa_\xi&=\kappa\kappa_N+({\rm div})
	=\frac{1}{144}\Scal\Delta_b\Scal+\frac{1}{12}|A|^2\Scal+\frac{1}{216}\Scal^3+({\rm div}), \\
	\kappa_{\g}\kappa^{\g}&=-\frac{1}{2}\kappa\Delta_b\kappa+({\rm div})
	=-\frac{1}{72}\Scal\Delta_b\Scal+({\rm div}).
\end{align*}
Substituting the results into \eqref{kappa2}, we have
$$
\kappa^{(2)}=\frac{1}{108}\Scal^3+\frac{1}{4}|A|^2\Scal+\frac{5}{48}\Scal\Delta_b\Scal-
iA_{\a\b,T}A^{\a\b}+({\rm div}).
$$
By \eqref{s3}, we obtain
\begin{equation*}
s^{(3)}=-\frac{1}{36}\Scal^3+\frac{3}{2}|A|^2\Scal-\frac{1}{8}\Scal\Delta_b\Scal+
3iA_{\a\b,T}A^{\a\b}+({\rm div}).
\end{equation*}
Using \eqref{BianchiScal}, we also have 
\begin{equation*}
s^{(3)}=-\frac{1}{36}\Scal^3+\frac{1}{4}\Scal\Delta_b\Scal+3|\nabla A|^2
+3R_{\a\ol{\b}\g\ol{\d}}A^{\a\g}A^{\ol{\b}\ol{\d}}+({\rm div}).
\end{equation*}
Comparing this formula with \eqref{Pi5}, we obtain
\begin{equation}\label{old-formula}
\begin{aligned}
\ol{Q}^{\prime}(M)+(4\pi)^3\mu(M)\\=-\int_M \Bigl(
\frac{1}{3}|S|^2\Scal & +4|\nabla A|^{2}-\frac{2}{3}|\partial_b\Scal|^2
\Bigr)\theta\wedge(d\theta)^2.
\end{aligned}
\end{equation}
Since 
$S_{\alpha\ol\beta\gamma\ol\delta}=R_{\alpha\ol\beta\gamma\ol\delta}-
\frac16 \Scal(h_{\alpha\ol\beta}h_{\gamma\ol\delta}+h_{\alpha\ol\delta}h_{\gamma\ol\beta}),
$
the Bianchi identity
$$
R_{\alpha\ol\beta[\rho|\ol\sigma,|\gamma]}=
iA_{\alpha[\gamma|,\ol\beta|}h_{\rho]\ol\sigma}+iA_{\alpha[\gamma|,\ol\sigma|}h_{\rho]\ol\beta}
$$
gives
$$
(\operatorname {div} S)_{\alpha\ol\beta\gamma}=-2iA_{\alpha\gamma,\ol\beta}+\frac{2}{3}\Scal_{(\alpha}h_{\gamma)\ol\beta},
$$
and thus
$$
|\operatorname {div} S|^2=4|\nabla A|^{2}-\frac{2}{3}|\partial_b\Scal|^2.
$$
Therefore \eqref{old-formula} can be also written as \eqref{Q-prime5}.

\setcounter{lem}{0}
\setcounter{equation}{0}
\renewcommand{\thesection}{A}
\setcounter{subsection}{0}

\section*{Appendix by A.\ Rod Gover and Kengo Hirachi
\\
Variation of the obstruction function 
and 
CR invariant differential operators }
 
In this appendix, we compute the first variation of the obstruction
function $\calO$ under deformations.  As is described in \S4, the
deformation of strictly pseudoconvex domains in a complex manifold can
be naturally parametrized by the densities $\calE(1)$ and the
obstruction function takes values in $\calE(-n-2)$.  Thus the first
observation here is that the first variation of the obstruction function
gives a CR invariant differential operator
$$
P_{n+3}\colon\calE(1)\to\calE(-n-2).
$$ 
Below we will express $P_{n+3}$ in terms of the connection of the ambient
metric. This shows that the principal part of $P_{n+3}$ agrees with
that of the power of the sublaplacian $\Delta_b^{n+3}$; see Theorem
\ref{variationOthm}.  The computation is a refinement of the ones in
\cite{GJMS, GG} in the sense that we keep track of the Ricci tensor
and the ambiguity of the ambient metric. In particular, we first show that
the power of ambient Laplacian $\wt\Delta^{n+3}$ induces an operator
$\calE(1)\to\calE(-n-2)$ that depends on the ambiguity in the ambient
metric; see \eqref{PPrelation}. Then we  show in
\S\ref{ambiguity-sec} that we can further normalize the ambient metric
so that $\wt\Delta^{n+3}$ defines a CR invariant operator.

 These results show that a CR invariant differential operator on
 $\calE(1)$, with principal part $\Delta_b^{n+3}$, is not obtained by
 the classical GJMS construction in conformal geometry \cite{GJMS},
 which was applied to the CR case in \cite{GG} via Fefferman's
 conformal structure on a manifold of dimension $2n+2$.  Moreover, in
 the conformal setting, related nonexistence theorems have been
 obtained in \cite{Gr1, GoH}: on a general conformal manifold of even
 dimensions $2N$, there is no conformally invariant linear
 differential operators with principal part $\Delta^k$ for $k>N$.  In
 the CR setting, the corresponding dimension is $n=N-1$.  This is not
 a contradiction since in the CR case the ambient metric has less
 ambiguity; the ambiguity is parametrized by $\calE(-n-2)$, in
 contrast to the general conformal case where it is parametrized by a
 weighted symmetric two tensor (on the original conformal manifold).
 See \S\ref{ambiguity-sec} for more discussion on the ambiguity.

\subsection{The GJMS construction}
We start by recalling the construction of invariant operators in \cite{GJMS, GG}.
For a strictly pseudoconvex hypersurface $M$ in a complex manifold $X$, we first fix 
 an ambient metric $\wt g$ and define differential operators by using the Laplacian of $\wt g$.
We follow the notation in \S\ref{Preliminary}.

\begin{lem}\label{lem-GJMS}
Let $k\in\bZ$ with $k\ge-n/2$.  

\noindent
$(1)$ For $F\in\wt\calE(k)$, 
\begin{equation}\label{GJMS-def}
(\wt\Delta^{n+2k+1}F)|_\calN\in\calE(-n-k-1)
\end{equation}
depends only on $f=F\,|_\calN\in\calE(k)$ and defines 
a  differential operator
$$
\wt\Delta^{n+2k+1}_\calN\colon\calE(k)\to\calE(-n-k-1)
$$
whose principal part agrees with that of the power of the sublaplacian $\Delta_b^{n+2k+1}$.

\noindent
$(2)$ Each $f\in\calE(k)$ can be extended to $F\in\wt\calE(k)$ so that
\begin{equation}\label{GJMS-psi}
\wt\Delta F=\psi\bfr^{n+2k}
\end{equation}
holds for a $\psi\in\wt\calE(-n-k-1)$.  Such an extension is unique modulo $O(\bfr^{n+2k+1})$. Moreover,
$\psi|_\calN$ depends only on $f$ and satisfies 
$$
c_{n,k}\psi|_\calN= \wt\Delta^{n+2k+1}_\calN f, \qquad c_{n,k}=(-1)^n((n+2k)!)^{2}.
$$
\end{lem}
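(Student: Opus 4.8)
The plan is to reduce everything to a single \emph{exact} indicial identity for the ambient Laplacian acting on a power of $\bfr$ times a density, and then run the classical order-by-order extension argument. First I would record that, for $\phi\in\wE(w)$ and every integer $m$,
\[
\wDelta(\bfr^m\phi)=m(m+n+1+2w)\,\bfr^{m-1}\phi+\bfr^m\wDelta\phi .
\]
This comes straight from the product rule applied to $\wDelta=\wt g^{A\conj B}\wn_{A\conj B}$, using only the exact relations $\bfr_{A\conj B}=\wt g_{A\conj B}$ (the K\"ahler condition), $\wt g^{A\conj B}\bfr_A\bfr_{\conj B}=\bfr^A\bfr_A=\bfr$, and the homogeneity relations $\bfr^A\phi_A=\bfr^{\conj B}\phi_{\conj B}=w\phi$. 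I want to stress that no commutation of derivatives occurs here, so the Ricci tensor never enters and the identity is exact rather than asymptotic; this is what keeps the later bookkeeping clean.

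For part (2) I would build the extension inductively. Starting from any $F\in\wE(k)$ with $F|_\calN=f$, suppose $\wDelta F=\psi_j\bfr^j+O(\bfr^{j+1})$ with $\psi_j\in\wE(k-1-j)$. Adding $-b_j^{-1}\bfr^{j+1}\psi_j$ to $F$ removes the $\bfr^j$-term, where by the indicial identity
\[
b_j=(j+1)\bigl(n+2k-j\bigr).
\]
Since $b_j\ne0$ for $0\le j\le n+2k-1$, all terms up to order $\bfr^{n+2k-1}$ can be solved away, while $b_{n+2k}=0$ obstructs the next step and leaves $\wDelta F=\psi\,\bfr^{n+2k}+O(\bfr^{n+2k+1})$ with $\psi\in\wE(-n-k-1)$. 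Applying the same indicial estimate to a difference of two solutions gives uniqueness modulo $O(\bfr^{n+2k+1})$. To identify $\psi|_\calN$ with the operator value I would iterate the indicial identity and track, at each application, which of its two terms is taken: writing $N=n+2k$, a nonzero restriction to $\calN$ forces the power of $\bfr$ to drop at every step, so only the chain of $N$ ``first'' terms survives, the density factor stays $\psi$ (weight $w=-n-k-1$), and the coefficient is
\[
\prod_{m=1}^{N}m\bigl(m-N-1\bigr)=N!\,(-1)^N N!=(-1)^n\bigl((n+2k)!\bigr)^2=c_{n,k},
\]
the $O(\bfr^{N+1})$ remainder being annihilated on $\calN$ by $\wDelta^{N}$. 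Thus $\wDelta^{n+2k+1}F|_\calN=c_{n,k}\,\psi|_\calN$ for this special $F$.

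For part (1) the essential point is that $\wDelta^{n+2k+1}(\,\cdot\,)|_\calN$ is independent of the chosen extension, i.e.\ it kills $\bfr\,\wE(k-1)$. For $F=\bfr G$ the same branch-tracking shows that any surviving contribution must use exactly one power-lowering step, occurring after some number $s$ of $\wDelta$'s have hit the density; the corresponding indicial factor is $n+2k-2s$, and summing over the position of that step yields
\[
\sum_{s=0}^{N}(N-2s)=N(N+1)-N(N+1)=0 .
\]
This cancellation gives $\wDelta^{n+2k+1}(\bfr G)|_\calN=0$, so $\wDelta^{n+2k+1}_\calN\colon\calE(k)\to\calE(-n-k-1)$ is well defined and, by part (2), equals $c_{n,k}\,\psi|_\calN$; in particular $\psi|_\calN$ depends only on $f$. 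For the principal part I would read off the symbol of $\wDelta$ from the matrix form \eqref{wtgatN}: the $T^{1,0}\otimes T^{0,1}$ block of $\wt g^{A\conj B}$ restricts to $-\ell^{\alpha\conj\beta}$, so the leading CR-part of each factor $\wDelta$ reproduces $\Delta_b$ while the transverse and $Z_0$-directions contribute only lower CR-order, exactly as in the symbol computation of \cite{GG}.

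The routine parts are the product-rule identity and the two elementary product/sum evaluations. The step that needs genuine care, and which I regard as the crux, is the combinatorial control of the indicial roots: one must check that the critical root is met precisely at order $\bfr^{n+2k}$ (so that $\psi$ is well defined and uniqueness holds only modulo $O(\bfr^{n+2k+1})$), and that one extra power of $\wDelta$ produces the vanishing sum $\sum_{s=0}^{N}(N-2s)=0$ responsible for the CR invariance of the restricted operator. Pinning down the weight $w=-n-k-1$ of $\psi$ is what fixes the constant $c_{n,k}$, so this accounting has to be carried out exactly.
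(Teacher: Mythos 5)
Your proof is correct and follows essentially the same route as the paper, which states the key commutator identity $[\wt\Delta,\bfr^l]=l\,\bfr^{l-1}(\bfr^A\pa_A+\bfr_A\pa^A+n+l+1)$ (equivalently your exact indicial identity) and then defers the "obvious finite induction" and remaining analysis to the GJMS argument. You have simply executed that argument in full: the indicial roots $(j+1)(n+2k-j)$, the branch-tracking product $\prod_{m=1}^{N}m(m-N-1)=c_{n,k}$, and the vanishing sum $\sum_{s=0}^{N}(N-2s)=0$ for CR invariance are exactly the details the paper leaves to \cite{GJMS, GG}, including the deferral of the principal-symbol claim to the computation in \cite{GG}.
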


The proof is exactly same as the one for the conformal case
\cite{GJMS}.  The key tool is the commutator of the Laplacian
$\wt\Delta=\wt\nabla_A\wt\nabla^A$ with powers of $\bfr$, acting on
functions on $\wt X$:
\begin{equation}\label{key-commutator}
[\wt\Delta,\bfr^l]=l\,\bfr^{l-1}(\bfr^A\pa_A+\bfr_A\pa^A+n+l+1).
\end{equation}
In particular, for $H\in\wt\calE(w)$, we have
\begin{equation}\label{delta-r}
\wt\Delta(\bfr^{l}{H})
=l(n+l+2w+1)\bfr^{l-1}{H}+\bfr^{l}\wt\Delta{H}.
\end{equation}
Using this equation, we can give an extension $F$ of $f\in\calE(k)$ satisfying $\wt\Delta F=O(\bfr)$  as follows:
Take an arbitrary extension $\wf\in\wt\calE(k)$ and set $F=\wf+\bfr\varphi$ for 
$\varphi\in\wt\calE(k-1)$.  Then
$$
\wt\Delta(\wf+ \bfr \varphi)=\wt\Delta\wf+(n+2k)\varphi+O(\bfr).
$$
Hence, if $n+2k\ne0$, then 
\begin{equation}\label{harmonic-ext}
F=\wf-\frac{1}{n+2k}\bfr\wt\Delta\wf+O(\bfr^2)
\end{equation}
satisfies $\wt\Delta F=O(\bfr)$.  This result
is extended by an obvious finite induction to yield (\ref{GJMS-psi}) and 
then further similar analysis results in the lemma.

More generally, in place of the powers of Laplacian, we can apply
covariant derivatives of type $(n+2k+1,n+2k+1)$ to $F\in\wt\calE(k)$
and then take a complete contraction.  
We consider the following example in the case $k=1$,
$$
\wt\Delta^{n+1}\wt\nabla^{AB}{}_{AB}F,
$$ 
which will appear in the variation of the obstruction function.
Here $\wt\Delta^{n+1}$ is composed with a contraction of holomorphic
covariant derivatives applied twice followed by anti-holomorphic derivatives
applied twice.

\begin{lem}\label{commutator-prop} For $F\in\wt\calE(1)$,
we have
\begin{equation}\label{operator-difference}
\wt\Delta^{n+1}\wt\nabla^{AB}{}_{AB}\,F-\wt\Delta^{n+3}F=
c_{n,1}\big(\calO\wt\Delta F+\calO_A F^A+\wt\Delta\calO\cdot F\,\big)+O(\bfr).
\end{equation}
In particular, if $F=O(\bfr)$, then 
$
\wt\Delta^{n+1}\wt\nabla^{AB}{}_{AB}\,F=O(\bfr)
$
holds.  
\end{lem}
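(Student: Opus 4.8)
The plan is to isolate the obstruction by first reducing \eqref{operator-difference} to a statement about the fourth-order operator $\wn^{AB}{}_{AB}-\wDelta^2$ and then feeding the result into the GJMS machinery of Lemma \ref{lem-GJMS}. Writing $\wDelta^{n+3}F=\wDelta^{n+1}(\wDelta^2F)$, it suffices to compute $\wDelta^{n+1}$ of the difference $D_2F:=\wn^{AB}{}_{AB}F-\wDelta^2F$. Since the two outer antiholomorphic derivatives commute on the $(2,0)$-tensor $\wn_A\wn_BF$ (the Kähler curvature $\wt R_{\conj C\conj D}{}$ vanishes), one has $\wn^{AB}{}_{AB}F=\wn^A\wn^B(\wn_A\wn_BF)$, and I would obtain $D_2F$ by commuting the antiholomorphic derivatives past the inner holomorphic ones. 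On the scalar density $F$ the mixed derivatives commute, but on $\wn_BF$ the commutator $[\wn_{\conj C},\wn_A]$ produces the ambient curvature; contracting twice collapses it to the Ricci tensor $\wt R_{A\conj B}$ of $\wt g$, giving an \emph{exact} identity of the schematic form
$$
D_2F=-\wt R^{\conj C D}\wn_{\conj C}\wn_DF-(\wn^A\wt R_A{}^D)\wn_DF .
$$

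The second ingredient is to express $\wt R_{A\conj B}$ through $\calO$. As $\wt g$ is Kähler, its Ricci form is $-dd^c\log\det\wt g$, and by the Monge--Ampère equation \eqref{MA-eq} together with the canonical volume $\wt\vol$ one has $\det\wt g\propto 1+\calO\bfr^{n+2}$; hence $\wt R_{A\conj B}=-\wn_A\wn_{\conj B}\log(1+\calO\bfr^{n+2})$. I would expand this to orders $\bfr^n$ and $\bfr^{n+1}$, using $\bfr_{AB}=0$, $\bfr_{A\conj B}=\wt g_{A\conj B}$ and the homogeneity relations $\bfr^A\bfr_A=\bfr$, $\bfr^A\calO_A=-(n+2)\calO$. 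The leading term is a multiple of $\calO\bfr^n\bfr_A\bfr_{\conj B}$, while the $\bfr^{n+1}$ terms carry $\calO_A\bfr_{\conj B}$, $\bfr_A\calO_{\conj B}$, $\calO\,\wt g_{A\conj B}$ and a trace of $\wn_A\wn_{\conj B}\calO$. Substituting into $D_2F$ and contracting with the Euler identities $\bfr^AF_A=F$ and $\wn_{\conj C}\bfr^D=0$ yields $D_2F=\bfr^n\Xi_0+\bfr^{n+1}\Xi_1+O(\bfr^{n+2})$ with $\Xi_0,\Xi_1$ explicit in $\calO$, $F$ and their derivatives.

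It then remains to apply $\wDelta^{n+1}$ and restrict to $\calN$. Using the commutator \eqref{delta-r} and the counting in Lemma \ref{lem-GJMS}, modulo $O(\bfr)$ the operator $\wDelta^{n+1}(\bfr^n\,\cdot\,)$ reduces to a constant times $\wDelta(\,\cdot\,)|_\calN$ while $\wDelta^{n+1}(\bfr^{n+1}\,\cdot\,)$ reduces to a constant times $(\,\cdot\,)|_\calN$. Since $\wDelta(\calO F)=\calO\wDelta F+\wDelta\calO\cdot F+\calO_AF^A+\calO^{\conj A}F_{\conj A}$, the $\bfr^n$-part alone would leave the spurious conjugate term $\calO^{\conj A}F_{\conj A}$; the whole point of carrying the $\bfr^{n+1}$-part is that $\Xi_1|_\calN$ cancels this term and rebalances the coefficients, so that the final combination is exactly $c_{n,1}(\calO\wDelta F+\calO_AF^A+\wDelta\calO\cdot F)$. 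I expect this to be the main obstacle: the subleading Ricci terms and the factorial constants coming from \eqref{delta-r} must conspire so that the three surviving terms share the single coefficient $c_{n,1}$ and the antiholomorphic term drops out entirely.

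Finally, the ``in particular'' follows directly from \eqref{operator-difference}. If $F=O(\bfr)$, write $F=\bfr\phi$ with $\phi\in\wt\calE(0)$; then $\wDelta F=(n+2)\phi+O(\bfr)$ and $F^A=\bfr^A\phi+O(\bfr)$, so $\calO\wDelta F+\calO_AF^A=(n+2)\calO\phi-(n+2)\calO\phi+O(\bfr)=O(\bfr)$ by $\bfr^A\calO_A=-(n+2)\calO$, while $\wDelta\calO\cdot F=O(\bfr)$. Moreover $\wDelta^{n+3}F=O(\bfr)$ by Lemma \ref{lem-GJMS}(1), since $F|_\calN=0$. Hence the right-hand side of \eqref{operator-difference} is $O(\bfr)$ and $\wDelta^{n+1}\wn^{AB}{}_{AB}F=O(\bfr)$.
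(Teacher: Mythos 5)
Your overall route is the same as the paper's: the contracted Ricci identity converts $\wn^{AB}{}_{AB}F-\wDelta^2F$ into Ricci terms, the Ricci tensor is computed from $\log\det\wt g=\bfr^{n+2}\calO+O(\bfr^{2n+4})$, and the GJMS counting \eqref{delta-r} finishes the job; your argument for the ``in particular'' clause is verbatim the paper's and is correct. But the sketch of the main identity has two concrete defects. First, the sign of your ``exact identity'' is wrong: with the convention $\Ric_{A\conj B}=-\pa_A\pa_{\conj B}\log\det\wt g$ that you (and the paper) use, the commutator computation gives
$$
\wn^{AB}{}_{AB}F-\wDelta^2F=+\wn^A\bigl(\Ric_A{}^BF_B\bigr),
$$
the opposite sign to yours; carried through, your version would produce $-c_{n,1}$ in \eqref{operator-difference}.

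Second, and this is the essential gap, the heart of the lemma is exactly the cancellation that you defer with ``I expect \dots must conspire,'' and the structure you predict for it is not the one that occurs. Expanding $\wn^A(\Ric_A{}^BF_B)$ using $\wn_A\bfr_{\conj B}=\wt g_{A\conj B}$, $\wDelta\bfr=n+2$, $\bfr_{AB}=0$, and the Euler identities $\bfr^A\bfr_A=\bfr$, $\bfr^AF_A=\bfr^{\conj A}F_{\conj A}=F$, $\bfr^A\calO_A=-(n+2)\calO$, one finds that the order-$\bfr^{n}$ coefficient vanishes identically (its net factor is $(n+2)(n+1)\bigl[(n+1)+1-(n+2)\bigr]=0$), and that the unwanted contraction $\calO^AF_A$ (antiholomorphic derivative on $\calO$, holomorphic on $F$) also cancels \emph{within} the order-$\bfr^{n+1}$ coefficient, its net factor being $(n+2)\bigl[1+(n+1)-(n+2)\bigr]=0$. (Note also that with standard raising conventions $\calO^{\conj A}F_{\conj A}=\calO_AF^A$, so your displayed expansion of $\wDelta(\calO F)$ lists the same cross term twice; the genuinely spurious term is $\calO^AF_A$.) What survives is the exact statement
$$
\wn^{AB}{}_{AB}F-\wDelta^2F=-(n+2)\,\bfr^{n+1}\bigl(\calO\wDelta F+\calO_AF^A+F\wDelta\calO\bigr)+O(\bfr^{n+2}),
$$
after which iterating \eqref{delta-r} gives $\wDelta^{n+1}(\bfr^{n+1}H)\big|_\calN=(-1)^{n+1}(n+1)!\,(n+2)!\,H\big|_\calN$ for $H\in\wE(-n-2)$ and hence \eqref{operator-difference}. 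There is no nonzero $\wDelta^{n+1}(\bfr^{n}\,\cdot\,)$ contribution, hence no cross-order conspiracy of the kind your assembly hinges on. Since neither the vanishing of the $\bfr^n$ term nor the in-order cancellation of $\calO^AF_A$ is established (or even correctly predicted) in your proposal, the proof of \eqref{operator-difference} is incomplete at its decisive step.
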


\begin{proof}
From the contracted Ricci identity, we have
$$
\wt\Delta^2 F-
\wt\nabla^{AB}{}_{AB}F=-\wt\nabla^A(\Ric_A{}^B F_B).
$$
Since $\det (\wt g_{A\conj B})=1+
\bfr^{n+2}\calO$, we have $\log\det (\wt g_{A\conj B})=
\bfr^{n+2}\calO+O(\bfr^{2(n+2)})$. Thus the Ricci tensor satisfies
$$
\Ric_{A\conj B}=-\wt\nabla_{A\conj B}(\bfr^{n+2}\calO)
+O(\bfr^{2n+2}).
$$
Thus we have
\begin{align*}
-\Ric_{A}{}^B F_B
=&(\bfr^{n+2}\calO)_{A}{}^B F_B+O(\bfr^{2n+1})
\\
=&
(n+2)(n+1)\bfr^n\bfr_A\calO F\\
&
+(n+2)\bfr^{n+1}(\calO_A F+\calO F_A
+\bfr_A\calO^B F_B)\\
&+\bfr^{n+2}\calO_A{}^B F_B+O(\bfr^{2n+2})
\end{align*}
and so
$$
-\wn^A(\Ric_{A}{}^B F_B)=
(n+2)\bfr^{n+1}(F\,\wt\Delta\calO+\calO_A F^A+\calO\wt\Delta F)+O(\bfr^{n+2}).
$$
Applying $\wt\Delta^{n+1}$ to both sides and then simplifying the right-hand side by using 
\eqref{delta-r},  we obtain  \eqref{operator-difference}.

To prove the second statement, it suffices to show that
$\calO\wt\Delta F+\calO_A F^A$
is $O(\bfr)$ if $F$ is also. If we write $F=\bfr\varphi$ with $\varphi\in\wt\calE(0)$, then
\begin{align*}
\calO\wt\Delta F+\calO_A F^A&
=\calO\wt\Delta(\bfr\varphi)+\calO_A \bfr^A\varphi+O(\bfr)\\
&=(n+2)\calO\varphi-(n+2)\calO\varphi+O(\bfr)\\
&=O(\bfr).\qedhere
\end{align*}
\end{proof}

The second statement of Lemma \ref{commutator-prop} implies that 
$$
\wt\Delta^{n+1}\wt\nabla^{AB}{}_{AB}F\big|_\calN
$$
depends only on $f=F|_\calN$ and hence we can  define a differential operator
$$
P_{n+3}\colon\calE(1)\to\calE(-n-3)
$$  
by $P_{n+3}f=\re \wt\Delta^{n+1}\wt\nabla^{AB}{}_{AB}\,F\big|_\calN$.
It follows from \eqref{operator-difference} that
 the principal part of $P_{n+3}$ agrees with that of $\wt\Delta^{n+3}$.
 
\subsection{Variation of the obstruction function}
Now we are ready to compute the variation of the obstruction function. Consider a family of Fefferman's defining functions $\{\bfr_t\}_t$ and, as in \S\ref{Hamilton-sec},  take a flow $\Phi_t$ generated by the vector field $Y_t=-\re\dot \bfr_t^A\pa_A$,
where $\dot \bfr_t=(d/dt)\bfr_t\in\wE(1)$.  To simplify the notation, we set 
$\bfr=\bfr_0$,
$\dot\bfr=\dot\bfr_{0}$ and
$\delta_0=(d/dt)|_{t=0}$.
\begin{thm}\label{variationOthm}
The first variation of the obstruction function satisfies
\begin{equation}\label{variationO}
\delta_0(\calO_t\circ\Phi_t)|_\calN=c_{n,1}^{-1}\,
P_{n+3}\dot\bfr.
\end{equation}
\end{thm}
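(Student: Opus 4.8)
The plan is to reduce everything to the two lemmas already assembled in the appendix, namely the GJMS-type Lemma \ref{lem-GJMS} and the commutator identity Lemma \ref{commutator-prop}, fed by the linearized Monge--Amp\`ere equation of Lemma \ref{cor-dotr1}. The first step is to separate the two sources of the variation. Since $\Phi_0=\mathrm{id}$ and $(d/dt)|_{t=0}\Phi_t=Y=-\re(\dot\bfr^A\pa_A)$, the chain rule gives $\delta_0(\calO_t\circ\Phi_t)=\dot\calO+Y\calO$, where $\dot\calO=\delta_0\calO_t$ and $Y\calO=-\re(\dot\bfr^A\calO_A)$ because $\calO$ is real. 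Thus the target identity becomes $\big(\dot\calO-\re(\dot\bfr^A\calO_A)\big)|_\calN=c_{n,1}^{-1}P_{n+3}\dot\bfr$, and the work is to relate $\dot\calO|_\calN$ to the ambient operator $\wt\Delta^{n+1}\wt\nabla^{AB}{}_{AB}$ applied to $\dot\bfr$.

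Next I would extract $\dot\calO|_\calN$ via the GJMS machinery. The linearized equation reads $\wt\Delta\dot\bfr=(n+2)\dot\bfr\calO\bfr^{n+1}+\dot\calO\bfr^{n+2}+O(\bfr^{2n+3})$, which is almost of the normalized form in Lemma \ref{lem-GJMS}(2) except for the offending $\bfr^{n+1}$ term. The idea is to absorb it by passing to the corrected extension $F=\dot\bfr-\bfr^{n+2}\dot\bfr\calO$ of $f=\dot\bfr|_\calN$; using \eqref{delta-r} one checks the $\bfr^{n+1}$ contributions cancel, leaving $\wt\Delta F=\bfr^{n+2}\big(\dot\calO-\wt\Delta(\dot\bfr\calO)\big)+O(\bfr^{n+3})$. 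Because $\wt\Delta^{n+3}F|_\calN$ is independent of the extension (Lemma \ref{lem-GJMS}(1)), it equals $\wt\Delta^{n+3}\dot\bfr|_\calN$, and Lemma \ref{lem-GJMS}(2) yields
\[
\wt\Delta^{n+3}\dot\bfr|_\calN=c_{n,1}\big(\dot\calO-\wt\Delta(\dot\bfr\calO)\big)|_\calN .
\]
Expanding $\wt\Delta(\dot\bfr\calO)$ by the product rule, and noting $\wt\Delta\dot\bfr=O(\bfr^{n+1})$ kills the $\calO\wt\Delta\dot\bfr$ term on $\calN$, this becomes $\wt\Delta^{n+3}\dot\bfr|_\calN=c_{n,1}\big(\dot\calO-\dot\bfr\,\wt\Delta\calO-\calO_A\dot\bfr^A-\dot\bfr_A\calO^A\big)|_\calN$.

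The final step is to convert $\wt\Delta^{n+3}$ into $\wt\nabla^{AB}{}_{AB}$ using Lemma \ref{commutator-prop} with $F=\dot\bfr$. Again the $\calO\wt\Delta\dot\bfr$ term vanishes on $\calN$, so $\wt\Delta^{n+1}\wt\nabla^{AB}{}_{AB}\dot\bfr|_\calN=\wt\Delta^{n+3}\dot\bfr|_\calN+c_{n,1}\big(\calO_A\dot\bfr^A+\dot\bfr\,\wt\Delta\calO\big)|_\calN$; substituting the previous display, the $\dot\bfr\,\wt\Delta\calO$ and $\calO_A\dot\bfr^A$ terms cancel and only $c_{n,1}\big(\dot\calO-\dot\bfr_A\calO^A\big)|_\calN$ survives. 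Taking real parts and using $\re(\dot\bfr_A\calO^A)=\re(\dot\bfr^A\calO_A)$, valid since the two are complex conjugates once $\calO$ is real, recovers exactly $c_{n,1}\big(\dot\calO-\re(\dot\bfr^A\calO_A)\big)|_\calN=c_{n,1}\,\delta_0(\calO_t\circ\Phi_t)|_\calN$, which is the claim. I expect the main obstacle to be the bookkeeping of this cancellation: one must track precisely which first-order-in-$\calO$ terms appear with which coefficients, confirm that everything except $\dot\bfr_A\calO^A$ cancels between the GJMS contribution and the commutator correction, and verify that the surviving term reproduces the flow contribution $Y\calO$ after passing to real parts.
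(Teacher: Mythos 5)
Your proposal is correct and follows essentially the same route as the paper's proof: the flow decomposition $\delta_0(\calO_t\circ\Phi_t)=\dot\calO-\re\dot\bfr^A\calO_A$, the corrected extension $\dot\bfr-\bfr^{n+2}\dot\bfr\calO$ fed into Lemma \ref{lem-GJMS} via the linearized Monge--Amp\`ere equation of Lemma \ref{cor-dotr1}, and the conversion to $\wt\Delta^{n+1}\wt\nabla^{AB}{}_{AB}$ by Lemma \ref{commutator-prop}. The only difference is organizational: you expand $\wt\Delta(\dot\bfr\calO)$ by the product rule before substituting into the flow formula, while the paper substitutes first, which is the same cancellation in a different order.
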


 Note that $\delta_0(\calO_t\circ\Phi_t)$ is intrinsically defined from the family of CR structures and hence $P_{n+3}$ can be factored through $P_{\alpha\beta}\colon\calE(1)\to\calE_{\alpha\beta}(1)$, which gives the first variation of the deformation corresponding to $f=-\dot\bfr|_\calN$; see \S\ref{deformation-complex}.
  In particular, $\delta_0(\calO_t\circ\Phi_t)=O(\bfr)$
 if $\dot\bfr|_\calN\in\ker P_{\alpha\beta}$.  This property can be also verified from the right-hand side: if $f\in\ker P_{\alpha\beta}$, then  Proposition \ref{ext-prop} gives an extension such that $\wn_{AB}F=O(\bfrho^\infty)$ and so
$\wt\Delta^{n+1}\wt\nabla^{AB}{}_{AB}F=O(\bfrho^\infty)$ follows.
 
\begin{proof}[Proof of Theorem \ref{variationOthm}]
Since $\Phi_t$ is generated by the vector field $Y_t=-\re\dot \bfr_t^A\pa_A$, we have
\begin{equation}\label{O-fvar}
\delta_0(\calO_t\circ\Phi_t)=\dot\calO-\re\dot\bfr^A\calO_A.
\end{equation}
Recall from Lemma \ref{cor-dotr1} that $\dot\calO$
satisfies
$$
\wt\Delta\dot\bfr=(n+2)\dot\bfr \calO\bfr^{n+1}+\dot\calO\bfr^{n+2}+O(\bfr^{2n+3}).
$$
Using
$
\wt\Delta(\dot\bfr\calO\bfr^{n+2})=(n+2)\dot\bfr\calO\bfr^{n+1}+
\bfr^{n+2}\wt\Delta(\dot\bfr\calO),
$
we have
$$
\wt\Delta(\dot\bfr-\dot\bfr\calO\bfr^{n+2})=\left(\dot\calO-\wt\Delta(\dot\bfr\calO)\right)\bfr^{n+2}+O(\bfr^{2n+3}).
$$
Thus, by Lemma \ref{lem-GJMS}, we obtain 
$\wDelta^{n+3}\dot\bfr=c_{n,1}(\dot\calO-\wt\Delta(\dot\bfr\calO))+O(\bfr)$, or 
\begin{align*}
\dot\calO
&=c_{n,1}^{-1}\wDelta^{n+3}\dot\bfr+\wt\Delta(\dot\bfr\calO)+O(\bfr).
\end{align*}
Substituting this into \eqref{O-fvar} gives 
$$
\delta_0(\calO_t\circ\Phi_t)=c_{n,1}^{-1}\,
\wt\Delta^{n+3}\dot\bfr+\re\dot\bfr^A\calO_A+
\dot\bfr\wt\Delta\calO+O(\bfr),
$$
which agrees with  $c_{n,1}^{-1}\re \wt\Delta^{n+1}\wt\nabla^{AB}{}_{AB}\,\dot\bfr+O(\bfr)$ 
by \eqref{operator-difference}.
  \end{proof}
  
 \subsection{Dependence on the ambient metric}
We next study the dependence of $P_{n+3}$  on the choice of the ambient metric.
Let $\wt g$ be an ambient metric with potential $\bfr$ and $\wh g$  one with potential 
$$
\wh\bfr=\bfr+\varphi\bfr^{n+3}/(n+3),
\qquad \varphi\in\wE(-n-2).
$$
We denote the covariant derivative,  the Laplacian  and the obstruction function for  the metric $\wh g$ by    $\wh\nabla$,   $\wh\Delta$ and $\wh\calO$, respectively.  

\begin{lem} \label{ambiguity-lem}
We have
\begin{align}\label{PPrelation}
\wh\Delta^{n+3}F&=\wt\Delta^{n+3}F+c_{n,1} F\wt\Delta\varphi+O(\bfr), \quad F\in\wt\calE(1);
\\ \label{calOrelation}
\wh\calO&=\calO+\bfr\wt\Delta\varphi/(n+3)+O(\bfr^2).
\end{align}
\end{lem}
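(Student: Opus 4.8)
The plan is to establish \eqref{calOrelation} first by a first-variation computation for the Monge--Amp\`ere equation, and then to deduce \eqref{PPrelation} from it together with the comparison formula \eqref{operator-difference} of Lemma \ref{commutator-prop}.

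For \eqref{calOrelation}, I would work with the determinant form of the Monge--Amp\`ere equation: in the homogeneous coordinates $X^A$ one has $\det(\wt g_{A\conj B})=1+\calO\bfr^{n+2}$ for $\wt g$ and $\det(\wh g_{A\conj B})=1+\wh\calO\wh\bfr^{n+2}$ for $\wh g$. Writing $\psi=\wh\bfr-\bfr=\varphi\bfr^{n+3}/(n+3)$, the first variation of $\log\det$ is $\wt\Delta\psi$, while the quadratic and higher variations are quadratic in the Hessian $\psi_{A\conj B}$. The key computational input is \eqref{delta-r}: applied with $l=n+3$ and $w=-n-2$ the coefficient $l(n+l+2w+1)$ vanishes identically, so that $\wt\Delta\psi=\tfrac1{n+3}\bfr^{n+3}\wt\Delta\varphi$, which is already $O(\bfr^{n+3})$. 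Crucially, the quadratic remainder is not merely $O(\bfr^{2n+2})$ but in fact $O(\bfr^{2n+4})$: the contractions $\wt g^{A\conj B}\bfr_A\bfr_{\conj B}=\bfr$ coming from the leading term $\psi_{A\conj B}\sim(n+2)\varphi\bfr^{n+1}\bfr_A\bfr_{\conj B}$ supply two extra powers of $\bfr$. Substituting these into $1+\wh\calO\wh\bfr^{n+2}=(1+\calO\bfr^{n+2})(1+\wt\Delta\psi+O(\bfr^{2n+4}))$ and using $\wh\bfr^{n+2}=\bfr^{n+2}+O(\bfr^{2n+4})$, I divide by $\bfr^{n+2}$ to get $\wh\calO=\calO+\tfrac1{n+3}\bfr\wt\Delta\varphi+O(\bfr^{n+2})$; since $n\ge1$ this error is $O(\bfr^2)$, giving \eqref{calOrelation}.

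For \eqref{PPrelation}, I would apply \eqref{operator-difference} to both metrics, writing
$$
\wt\Delta^{n+3}F=\wt\Delta^{n+1}\wt\nabla^{AB}{}_{AB}F-c_{n,1}\big(\calO\wt\Delta F+\calO_AF^A+F\wt\Delta\calO\big)+O(\bfr)
$$
and the analogous identity for $\wh g$. The operator $F\mapsto\wt\Delta^{n+1}\wt\nabla^{AB}{}_{AB}F|_\calN$ is independent of the choice of ambient metric (it factors through the CR invariant $P_{\calpha\cbeta}$, cf.\ Proposition \ref{ext-prop} and Theorem \ref{variationOthm}), so these two leading terms cancel upon subtraction and the difference is governed entirely by the obstruction terms. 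Here I would evaluate the difference at $\calN$ using \eqref{calOrelation}: since $\wh\calO$ and $\calO$ agree on $\calN$ and $\wh\Delta F-\wt\Delta F=O(\bfr)$, the $\calO\wt\Delta F$ terms cancel, while the derivative terms see the correction $\tfrac1{n+3}\bfr\wt\Delta\varphi$. Using $\bfr_AF^A=\bfr^{\conj B}F_{\conj B}=F$ and \eqref{delta-r} (with $l=1$, $w=-n-3$, giving the coefficient $-(n+4)$), one finds $\calO_AF^A$ contributes $\tfrac1{n+3}F\wt\Delta\varphi$ and $F\wt\Delta\calO$ contributes $\tfrac{-(n+4)}{n+3}F\wt\Delta\varphi$, summing to $-F\wt\Delta\varphi$. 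Hence $\wh\Delta^{n+3}F-\wt\Delta^{n+3}F=-c_{n,1}(-F\wt\Delta\varphi)+O(\bfr)=c_{n,1}F\wt\Delta\varphi+O(\bfr)$, which is \eqref{PPrelation}.

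The main obstacle I anticipate is justifying that the full (complex) operator $\wt\Delta^{n+1}\wt\nabla^{AB}{}_{AB}F|_\calN$ is genuinely independent of the ambient metric, rather than only its real part as used in Theorem \ref{variationOthm}; this exact cancellation of the two leading terms is what drives the argument. A secondary bookkeeping point is to confirm that replacing $\wh\nabla$, the $\wh g$-raised indices, and $\wh\Delta$ by their $\wt g$-counterparts in the obstruction terms only introduces errors that vanish on $\calN$: the metric-inverse difference is $O(\bfr^{n+1})$ and the Christoffel difference is $O(\bfr^n)$, both $O(\bfr)$ for $n\ge1$. It is also worth emphasizing that the direct route---expanding $\wh\Delta^{n+3}=(\wt\Delta+D)^{n+3}$---is deceptively delicate, since for the top term the \emph{subleading} part of $D$ survives after $n+2$ Laplacians; routing the computation through the obstruction is precisely what hides this difficulty. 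Once the invariance and these order estimates are in hand, the computation is the clean substitution described above.
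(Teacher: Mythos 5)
Your proof of \eqref{calOrelation} is correct and is essentially the paper's own argument: one varies $\det(\wt g_{A\conj B})$ in the homogeneous coordinates $X^A$, using \eqref{delta-r} with $l=n+3$, $w=-n-2$ to get $\wt\Delta(\bfr^{n+3}\varphi)=\bfr^{n+3}\wt\Delta\varphi$ exactly. Your observation that the quadratic remainder is $O(\bfr^{2n+4})$ rather than the naive $O(\bfr^{2n+2})$, because the leading Hessian term is proportional to $\bfr_A\bfr_{\conj B}$ and each contraction $\bfr^A\bfr_A=\bfr$ supplies an extra power, is a point the paper leaves implicit and is genuinely needed when $n=1$.

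Your derivation of \eqref{PPrelation}, however, has a genuine gap, and it is precisely the step you flag: the ambient-metric independence of $\wt\Delta^{n+1}\wt\nabla^{AB}{}_{AB}F\big|_\calN$. In the paper this independence is \eqref{PPrelation2}, and it is obtained as a \emph{consequence} of Lemma \ref{ambiguity-lem} by substituting \eqref{PPrelation} and \eqref{calOrelation} into \eqref{operator-difference}; taking it as an input is therefore circular unless you prove it independently, which you do not. The justifications you cite are insufficient as they stand: Proposition \ref{ext-prop} only shows that $\ker P_{\alpha\beta}$ is annihilated, which gives neither a factorization $P_{n+3}=S\circ P_{\alpha\beta}$ nor, even granting one, any reason why the factor $S$ would be metric-independent; and Theorem \ref{variationOthm} controls only $\re\,\wt\Delta^{n+1}\wt\nabla^{AB}{}_{AB}\dot\bfr\big|_\calN$ for real $\dot\bfr$, and converting it into metric-independence requires first proving that $\delta_0(\calO_t\circ\Phi_t)\big|_\calN$ is unchanged when the family $\bfr_t$ is replaced by $\wh\bfr_t=\bfr_t+\bfr_t^{n+3}\varphi_t/(n+3)$ --- both $\calO_t$ and the flow $\Phi_t$ (through $\dot\bfr_t$ and the index-raising metric) depend on this choice. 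That invariance is true, but establishing it needs the same metric-comparison expansions ($\wh g^{A\conj B}=\wt g^{A\conj B}+O(\bfr^{n+1})$, $\wh\calO_t=\calO_t+O(\bfr_t)$, $\wh\Phi_t(\calN)=\calN_t$) that the direct proof uses, plus an argument that every real $f\in\calE(1)$ arises as $\dot\bfr|_\calN$ of an admissible family, plus the upgrade from the real part to the full complex identity (possible, since by your own computation the remaining terms of \eqref{operator-difference} have real difference, but again not carried out). Note finally that your reason for rejecting the direct route is misplaced: the paper never expands $(\wt\Delta+D)^{n+3}$. It computes $\wh\Delta F-\wt\Delta F$ once from the expansion of $\wh g^{A\conj B}$, and then invokes part (2) of Lemma \ref{lem-GJMS}: writing $\wt\Delta F=\psi\bfr^{n+2}$ and setting $\wh F=F+\bfr^{n+2}F\varphi$, which has the same restriction to $\calN$ as $F$, one finds $\wh\Delta\wh F=(\psi+F\wt\Delta\varphi)\,\bfr^{n+2}+O(\bfr^{n+3})$, whence $\wh\Delta^{n+3}F\big|_\calN=c_{n,1}(\psi+F\wt\Delta\varphi)\big|_\calN$. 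This single application of the GJMS characterization is what makes the direct proof short, and it is the idea your proposal is missing.
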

If we substitute above two formulas into \eqref{operator-difference}, we obtain
\begin{equation}\label{PPrelation2}
\wh\Delta^{n+1}\wh\nabla^{AB}{}_{AB}\, F=
\wt\Delta^{n+1}\wt\nabla^{AB}{}_{AB}\, F+O(\bfr),
\end{equation}
which shows that $P_{n+3}$ is a CR invariant differential operator. 
This is consistent with \eqref{variationO}, in which the left-hand side is CR invariant by the construction.

\begin{proof}[Proof of Lemma \ref{ambiguity-lem}]
Differentiating $\wh\bfr=\bfr+\varphi\bfr^{n+3}/(n+3)$, we have
\begin{align*}
\wh g_{A\conj B}=\wt g_{A\conj B}&+(n+2)\bfr^{n+1}\bfr_A\bfr_{\conj B}\varphi\\
&
+\bfr^{n+2}(\bfr_{A}\varphi_{\conj B}+\bfr_{\conj B}\varphi_{A}
+\wt g_{A\conj B}\varphi)
+O(\bfr^{n+3}).
\end{align*}
Thus the inverse matrix satisfies
\begin{align*}
\wh g^{A\conj B}=\wt g^{A\conj B}&-(n+2)\bfr^{n+1}\bfr^A\bfr^{\conj B}\varphi\\
&-\bfr^{n+2}(\bfr^{A}\varphi^{\conj B}+\bfr^{\conj B}\varphi^{A}+\wt g^{A\conj B}\varphi)
+O(\bfr^{n+3}),
\end{align*}
where on the right-hand side the indices are raised by using $\wt g^{A\conj B}$.
Hence, for $F\in\wt\calE(1)$,
\begin{align*}
\wh\Delta F&=\wt\Delta F-(n+2)\bfr^{n+1}F\varphi\\
&\quad\qquad-
\bfr^{n+2}\big(
\varphi_AF^A+\varphi^AF_A+\varphi\wt\Delta F\big)+O(\bfr^{n+3})\\
&=\wt\Delta F-(n+2)\bfr^{n+1}F\varphi\\
&\quad\qquad-
\bfr^{n+2}\big(
\wt\Delta(F\varphi)-F\wt\Delta\varphi\big)+O(\bfr^{n+3}).
\end{align*}
We now use Lemma \ref{lem-GJMS}. 
If $\wDelta F=\psi\bfr^{n+2}$ holds, then
\begin{align*}
\wh\Delta F=&-(n+2)\bfr^{n+1}F\varphi\\
&+
\bfr^{n+2}\big(\psi-\wDelta(F\varphi)+F\wt\Delta\varphi\big)+O(\bfr^{n+3}).
\end{align*}
On the other hand, for $F\varphi\in\wt\calE(-n-1)$,
\begin{align*}
\wh\Delta (\bfr^{n+2}F\varphi)&=\wt\Delta  (\bfr^{n+2}F\varphi)+O(\bfr^{n+3})\\
&=(n+2)  \bfr^{n+1}F\varphi+\bfr^{n+2}\wt\Delta (F\varphi)+O(\bfr^{n+3}).
\end{align*}
Thus, setting $\wh F=F+\bfr^{n+2}F\varphi$, we have
$\wh\Delta\wh F=\wh\psi\,\bfr^{n+2}+O(\bfr^{n+3})$, where
$$
\wh\psi=\psi+F\wt\Delta\varphi.
$$
Since $c_{n,1}\wh\psi|_\calN=\wh\Delta^{n+3}F|_\calN$, we get the desired formula.

To prove the second formula, we take the variation of the determinant in the homogenous coordinates $X^A$ (see \S \ref{sec-ambient-metric}):
\begin{align*}
(-1)^{n+1}\det\wh g_{A\conj B}
&=(-1)^{n+1}(\det\wt g_{A\conj B})
(1+\wt\Delta(\bfr^{n+3}\varphi)/(n+3)+O(\bfr^{n+4}))\\
&=(-1)^{n+1}\det\wt g_{A\conj B}+\bfr^{n+3}\wt\Delta\varphi/(n+3)+O(\bfr^{n+4}).
\end{align*}
Thus the obstruction function of $\wh\bfr$ satisfies
$
\wh\calO=\calO+\bfr\wt\Delta\varphi/(n+3)+O(\bfr^2).
$
\end{proof}

\subsection{Normalizing the ambient metric}\label{ambiguity-sec}
  In view of  \eqref{PPrelation}, we see that $\wt\Delta^{n+3}$ defines a CR invariant operator if we can further normalize the ambiguity $\varphi$ so that $\wt\Delta\varphi=O(\bfr)$ holds. Such a normalization is obtained by imposing 
\begin{equation}\label{strictF}
\wt\Delta\calO=O(\bfr),
\end{equation}
where $\wt\Delta$ is defined with respect to the ambient metric with potential $\bfr$.
We  call a Fefferman's defining function satisfying \eqref{strictF}  a {\em strict Fefferman's defining function}.

\begin{prop}
For any strictly pseudoconvex $M\subset X$, 
there exist  strict Fefferman's defining functions of $\calN=K_M^*\subset\wt X$.
If $\bfr$ and $\wh\bfr$ are two such defining functions for $\calN$, then
$$
\wh\bfr-\bfr=\bfr^{n+3}\varphi
$$
 for  $\varphi\in\wt\calE(-n-2)$ such that
$\wt\Delta\varphi=O(\bfr)$.
\end{prop}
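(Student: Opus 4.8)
The plan is to parametrize all Fefferman defining functions via the ambiguity of Proposition~\ref{MA-prop} and reduce the strict condition \eqref{strictF} to a single scalar equation along $\calN$. Fix one Fefferman defining function $\bfr$, with ambient Laplacian $\wt\Delta$ and obstruction $\calO$, and write a competitor as $\wh\bfr=\bfr+\bfr^{n+3}\varphi$, $\varphi\in\wt\calE(-n-2)$; by Proposition~\ref{MA-prop} these are precisely the remaining Fefferman defining functions, with obstruction $\wh\calO$. The computational heart is the evaluation of $(\wh\Delta\wh\calO)|_\calN$. Because the ambient metrics are K\"ahler, both Laplacians act on functions as $\wt g^{A\conj B}\pa_A\pa_{\conj B}$, so the proof of Lemma~\ref{ambiguity-lem} gives $\wh g^{A\conj B}-\wt g^{A\conj B}=O(\bfr^{n+1})$ and hence $\wh\Delta\wh\calO=\wt\Delta\wh\calO+O(\bfr^{n+1})$, which agrees with $\wt\Delta\wh\calO$ on $\calN$. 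Substituting \eqref{calOrelation} (after rescaling the $\varphi$ there by $n+3$, so that $\wh\calO=\calO+\bfr\wt\Delta\varphi+O(\bfr^2)$), applying $\wt\Delta$, and reducing the terms $\bfr\wt\Delta\varphi$ and $O(\bfr^2)$ by \eqref{delta-r} at weights $w=-n-3$ and below, I expect to obtain
\[
(\wh\Delta\wh\calO)|_\calN=(\wt\Delta\calO)|_\calN-(n+4)(\wt\Delta\varphi)|_\calN .
\]
This identity drives both assertions.

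For existence I would start from an arbitrary $\bfr$ and solve $(\wt\Delta\varphi)|_\calN=(n+4)^{-1}(\wt\Delta\calO)|_\calN$. The restriction $(\wt\Delta\varphi)|_\calN$ can be prescribed freely: taking $\varphi=\bfr\sigma$ with $\sigma\in\wt\calE(-n-3)$, formula \eqref{delta-r} with $l=1$ gives $(\wt\Delta\varphi)|_\calN=-(n+4)\,\sigma|_\calN$, and $n+4\ne0$ lets me choose $\sigma|_\calN$ to hit any target in $\calE(-n-3)$. The resulting $\wh\bfr$ is still a Fefferman defining function (its modification is $O(\bfr^{n+3})$) and now satisfies $(\wh\Delta\wh\calO)|_\calN=0$, i.e.\ \eqref{strictF}; hence it is strict.

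For the ambiguity statement, let $\bfr$ and $\wh\bfr$ both be strict. By Proposition~\ref{MA-prop} they differ by $O(\bfr^{n+3})$, and smooth divisibility by $\bfr^{n+3}$ on the hypersurface $\{\bfr=0\}$ gives $\wh\bfr-\bfr=\bfr^{n+3}\varphi$ with $\varphi\in\wt\calE(-n-2)$. Strictness of both means $(\wt\Delta\calO)|_\calN=0$ and $(\wh\Delta\wh\calO)|_\calN=0$, so the displayed identity forces $(n+4)(\wt\Delta\varphi)|_\calN=0$, and since $n+4\ne0$ we conclude $\wt\Delta\varphi=O(\bfr)$, as claimed.

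The step I expect to be the main obstacle is the central identity itself: one must check that $\wh\Delta-\wt\Delta$ truly contributes nothing on $\calN$ (this is where the K\"ahler form of the Laplacian is essential, so that only the $O(\bfr^{n+1})$ change of the inverse metric, and no Christoffel terms, enter), and one must apply \eqref{delta-r} at exactly the weight $w=-n-3$ of $\wt\Delta\varphi$ to produce the coefficient $-(n+4)$. The nonvanishing of this single coefficient is what simultaneously makes the existence equation solvable and the uniqueness conclusion forced; everything else is routine bookkeeping with \eqref{PPrelation}, \eqref{calOrelation}, and \eqref{delta-r}.
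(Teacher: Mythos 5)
Your proposal is correct and is essentially the paper's own proof: both hinge on the identity $\wh\Delta\wh\calO=\wt\Delta\calO-(n+4)\,\wt\Delta\varphi+O(\bfr)$ (the paper's \eqref{DeltaO}, up to your rescaling of $\varphi$ by $n+3$), derived exactly as you do from \eqref{calOrelation}, from \eqref{delta-r} applied at weight $-n-3$, and from $\wh\Delta=\wt\Delta+O(\bfr^{n+1})$ acting on functions. Your existence choice $\varphi=\bfr\sigma$ with $\sigma|_\calN=-(n+4)^{-2}(\wt\Delta\calO)|_\calN$ coincides, after the rescaling, with the paper's $\varphi=-(n+3)(n+4)^{-2}\,\bfr\wt\Delta\calO$, and your uniqueness argument is the same as the paper's.
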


\begin{proof}
Let $\wh g$ be the ambient metric with potential
$\wh\bfr=\bfr+\bfr^{n+3}\varphi/(n+3)$.
Applying $\wt\Delta$ to \eqref{calOrelation} gives
$
\wt\Delta\wh\calO=
\wt\Delta\calO-(n+4)/(n+3)\wt\Delta\varphi+O(\bfr).
$
Since $\wh\Delta\wh\calO=\wt\Delta\wh\calO+O(\bfr^{n+1})$, we obtain
\begin{equation}\label{DeltaO}
\wh\Delta\wh\calO=
\wt\Delta\calO-\frac{n+4}{n+3}\wt\Delta\varphi+O(\bfr).
\end{equation}
Thus setting $\varphi=-(n+3)/(n+4)^2\bfr\wt\Delta \calO$, 
we obtain $\wh\Delta\wh\calO=O(\bfr)$.

If $\bfr$ and $\wh\bfr$ are strict Fefferman's defining functions, then
\eqref{DeltaO} forces $\wt\Delta\varphi=O(\bfr)$.
\end{proof}

With this refinement of the ambient metric, we can generalize the GJMS construction of CR invariant differential operators.

\begin{thm}
 Let $\wt g$ be the ambient metric defined from a strict Fefferman's defining function.  Then $
\wt\Delta^{n+3}\colon\wt\calE(1)\to\wt\calE(-n-2)$ induces a CR invariant differential operator
$\wt\Delta^{n+3}_\calN\colon\calE(1)\to\calE(-n-2)$.
\end{thm}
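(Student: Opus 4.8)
The plan is to establish the two assertions packaged in the statement separately: first that $\wt\Delta^{n+3}$ descends to a well-defined operator on densities over $\calN$, and second that the resulting operator is independent of the choice of strict Fefferman's defining function, which is precisely its CR invariance.

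For the first point I would invoke Lemma \ref{lem-GJMS} in the case $k=1$. With this choice $n+2k+1=n+3$ and $-n-k-1=-n-2$, so part (1) of that lemma already tells us that for $F\in\wt\calE(1)$ the restriction $(\wt\Delta^{n+3}F)|_\calN\in\calE(-n-2)$ depends only on $f=F|_\calN$. This defines, for each fixed ambient metric, an operator $\wt\Delta^{n+3}_\calN\colon\calE(1)\to\calE(-n-2)$ whose principal part is that of $\Delta_b^{n+3}$. This step uses nothing about strictness; the strict normalization will enter only in the invariance.

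For CR invariance I would compare the operators built from two strict Fefferman's defining functions $\bfr$ and $\wh\bfr$. By the proposition just proved, their difference has the form $\wh\bfr=\bfr+\varphi\bfr^{n+3}/(n+3)+O(\bfr^{n+4})$ with $\varphi\in\wt\calE(-n-2)$, and --- crucially --- the strictness \eqref{strictF} of both forces $\wt\Delta\varphi=O(\bfr)$. Feeding this $\varphi$ into the comparison formula \eqref{PPrelation} of Lemma \ref{ambiguity-lem} gives, for every $F\in\wt\calE(1)$,
$$
\wh\Delta^{n+3}F=\wt\Delta^{n+3}F+c_{n,1}F\wt\Delta\varphi+O(\bfr).
$$
Because $\wt\Delta\varphi=O(\bfr)$, the middle term is itself $O(\bfr)$, so the entire correction vanishes upon restriction to $\calN$ (where $\bfr=0$). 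Hence $(\wh\Delta^{n+3}F)|_\calN=(\wt\Delta^{n+3}F)|_\calN$, i.e.\ $\wh\Delta^{n+3}_\calN f=\wt\Delta^{n+3}_\calN f$ for all $f=F|_\calN$, which is the required independence of the defining function.

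The substantive work is already carried by the two preceding results: Lemma \ref{ambiguity-lem}, which isolates $c_{n,1}F\wt\Delta\varphi$ as the sole obstruction to invariance, and the proposition guaranteeing both the existence of strict Fefferman's defining functions and the vanishing $\wt\Delta\varphi=O(\bfr)$ of the residual ambiguity. Given these, the only thing to watch is the bookkeeping: confirming that the normalizing condition \eqref{strictF} is exactly what annihilates the term obstructing invariance in the general (non-strict) case recorded in \eqref{PPrelation}. I therefore expect no genuine obstacle; the theorem is essentially a corollary assembling Lemma \ref{lem-GJMS}, Lemma \ref{ambiguity-lem}, and the proposition.
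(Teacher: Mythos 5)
Your proposal is correct and is essentially the paper's own argument: the paper states this theorem without a separate proof precisely because it is the assembly you describe --- Lemma \ref{lem-GJMS} with $k=1$ for well-definedness of the induced operator, and the combination of \eqref{PPrelation} with the proposition on strict Fefferman's defining functions for independence of the choice.

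One transcription error should be fixed, because as literally written your invariance step does not close. You quote the proposition as giving $\wh\bfr=\bfr+\varphi\bfr^{n+3}/(n+3)+O(\bfr^{n+4})$, but the proposition asserts the \emph{exact} equality $\wh\bfr-\bfr=\bfr^{n+3}\varphi$, with the whole of $\varphi\in\wt\calE(-n-2)$ (all higher-order terms in $\bfr$ included) satisfying $\wt\Delta\varphi=O(\bfr)$. This exactness is essential here: an unaccounted perturbation $\bfr^{n+4}\psi$ of the potential, with $\psi\in\wt\calE(-n-3)$, is \emph{not} harmless, since applying \eqref{PPrelation} with $\varphi=(n+3)\bfr\psi$ and using $\wt\Delta(\bfr\psi)=-(n+4)\psi+O(\bfr)$ (a special case of \eqref{delta-r}) shows that it shifts $\wt\Delta^{n+3}F\big|_\calN$ by the nonzero zeroth-order term $-(n+3)(n+4)\,c_{n,1}(\psi F)\big|_\calN$. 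In other words, the induced operator on $\calN$ is sensitive to the defining function beyond order $n+3$ --- this sensitivity is exactly why the normalization \eqref{strictF} is needed at all --- so the difference of two strict defining functions must be controlled exactly, as the proposition states, and not merely modulo $O(\bfr^{n+4})$. With the proposition quoted correctly, your argument is complete and coincides with the paper's.
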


\begin{rem}\rm A further normalization of defining function has been done in 
\cite{H01}, where an exact formal solution to the Monge--Amp\`ere equation with logarithmic singularity was used. 
\end{rem}

\section*{Acknowledgments}
We would like to thank Jeffrey Case for pointing out an error in the computation of the total $Q$-prime curvature in dimension 5 in the earlier version of this paper.
 
K.H was partially supported by JSPS KAKENHI 60218790
and 15H02057.
T.M was partially supported by JSPS Research Fellowship for Young Scientists and KAKENHI 13J06630.
Y.M was partially supported by JSPS Research Fellowship for Young Scientists and KAKENHI 14J11754.
A.R.G gratefully acknowledges support from the Royal Society of New Zealand via Marsden Grant 13-UOA-018.

\end{document}